\title{Diophantine tori and Weyl laws for non-selfadjoint operators in dimension two}
\author{Michael Hitrik\\Department of Mathematics \\University of California \\ Los Angeles
\\CA 90095-1555, USA\\hitrik@math.ucla.edu \and
Johannes Sj\"ostrand\\IMB, Universit\'e de Bourgogne\\9, Av. A. Savary, BP 47870\\FR-21078 Dijon C\'edex France\\and UMR 5584 CNRS
\\johannes.sjostrand@u-bourgogne.fr}
\date{}
\newcommand{\real}{\mbox{\bf R}}
\def\abs#1{\left|#1\right|}
\def\begeq{\begin{equation}}
\def\endeq{\end{equation}}
\renewcommand{\Im}{\mbox{\rm Im\,}}
\def\wrtext#1{\relax\ifmmode{\leavevmode\hbox{#1}}\else{#1}\fi}
\newcommand{\eps}{\varepsilon}
\def\part#1{\frac{\partial}{\partial #1}}
\def\norm#1{||\,#1\,||}
\newcommand{\comp}{\mbox{\bf C}}
\newcommand{\z}{\mbox{\bf Z}}
\newcommand{\nat}{\mbox{\bf N}}
\renewcommand{\Re}{\mbox{\rm Re\,}}
\renewcommand{\exp}{\mbox{\rm exp\,}}
\newcommand{\supp}{\mbox{\rm supp}}
\newcommand{\neigh}{neighborhood}
\newtheorem{dref}{Definition}[section]
\newtheorem{lemma}[dref]{Lemma}
\newtheorem{theo}[dref]{Theorem}
\newtheorem{prop}[dref]{Proposition}
\newenvironment{proof}{\vspace{.3cm}\noindent{{\em Proof:}}}{\hfill$\Box$
\vspace{.2cm}}
\begin{document}
\maketitle

\vspace*{1cm}
\noindent
{\bf Abstract}: We study the distribution of eigenvalues for non-selfadjoint perturbations of selfadjoint semiclassical
analytic pseudodifferential operators in dimension two, assuming that the classical flow of the unperturbed part is completely integrable.
An asymptotic formula of Weyl type for the number of eigenvalues in a spectral band, bounded from above and from below by levels corresponding
to Diophantine invariant Lagrangian tori, is established. The Weyl law is given in terms of the long time averages
of the leading non-selfadjoint perturbation along the classical flow of the unperturbed part.

\vskip 2.5mm
\noindent {\bf Keywords and Phrases:} Non-selfadjoint,
eigenvalue, Weyl law, resolvent, Lag\-ran\-gian, Diophantine torus, completely integrable, trace class perturbation

\vskip 2mm
\noindent
{\bf Mathematics Subject Classification 2000}: 35P15, 35P20, 37J35,37J40, 47A55, 53D22, 58J37, 58J40

\tableofcontents
\section{Introduction}
\label{intro}
\setcounter{equation}{0}
Among the various results in spectral analysis for selfadjoint linear partial differential operators, a distinguished role is played by the
Weyl law, remarkable for its generality and simplicity--- see~\cite{DiSj},~\cite{Ivrii} for a detailed discussion and numerous references. Let us
recall here a rough statement of it in the semiclassical case. Let $P = p^w(x,hD_x)$, $0<h \leq 1$, be the semiclassical Weyl
quantization on $\real^n$ of a real-valued smooth symbol $p$ belonging to a suitable symbol class and satisfying an ellipticity condition at infinity,
guaranteeing that the spectrum of $P$ is discrete in a fixed open set $\Omega \subset \real$. When $[E_1,E_2] \subset \Omega$ is an
interval such that ${\rm vol}(p^{-1}(E_j))=0$, $j=1,2$, then the number $N(P,[E_1,E_2])$ of eigenvalues of $P$ in $[E_1,E_2]$ satisfies
\begeq
\label{Weyl0}
N(P,[E_1,E_2])  = \frac{1}{(2\pi h)^n} \left({\rm vol}(p^{-1}([E_1,E_2])) + o(1)\right),\quad h\rightarrow 0.
\endeq
The leading coefficient in the Weyl law (\ref{Weyl0}), given by the phase space volume corresponding to the energy range $[E_1,E_2]$,
captures the basic physical intuition of each quantum state occupying a fixed volume $(2\pi h)^n$ in the phase space. We remark that the
corresponding development for selfadjoint partial differential operators in the high energy limit has a long and distinguished tradition, starting
with the works of H. Weyl, see~\cite{Weyl}.

\medskip
\noindent
The situation becomes quite different in the non-selfadjoint analytic case, where the asymptotics of the counting function for eigenvalues may no
longer be governed by volumes of subsets of the real phase space, and the spectrum is often determined by the behavior of the holomorphic continuation
of the symbol along suitable complex deformations of the real phase space. Following~\cite{Davies}, \cite{HSj}, we may consider, for instance, the
complex harmonic oscillator $P = 1/2((hD_x)^2 + ix^2)$ on $L^2(\real)$. Here the range of the symbol on $\real^2$ is the closed first quadrant, while
according to the general results of~\cite{Sj74}, the spectrum of $P$ is equal to $\{e^{i\pi /4} (k+\frac{1}{2})h; k\in \nat\}$. In this case, it turns
out that the natural phase space associated to $P$ is given by $T^*(e^{-i\frac{\pi}{8}}\real)$ --- see also~\cite{Hi1} for the precise spectral results
in dimension one, closely related to this example.

\medskip
\noindent
In the paper~\cite{MeSj2}, it has been shown that for large and stable classes of non-selfadjoint analytic operators
in dimension two, the individual eigenvalues can be determined accurately in the semiclassical limit by means of a Bohr-Sommerfeld quantization condition, defined in terms of suitable complex Lagrangian tori close to the real domain.
(See also~\cite{Sj08} for the formulation of the corresponding Weyl laws.) The work~\cite{MeSj2} was subsequently continued in a series of papers
\cite{HiSj04}--\cite{HiSj3a}, \cite{HiSjVu},\cite{HiSj08}, all of them concerned with the case of non-selfadjoint perturbations of selfadjoint operators
of the form,
$$
P_{\eps}(x,hD_x)  = p^w(x,hD_x) + i\eps q^w(x,hD_x),\quad 0< \eps \ll 1,
$$
with the leading symbol $p_{\eps}(x,\xi) = p(x,\xi) + i\eps q(x,\xi)$, $(x,\xi)\in T^*\real^2$. Here $p$ is real, so that $P_{\eps=0}$ is selfadjoint, and
we assume that both $p$ and $q$ are analytic, with $p$ elliptic outside a compact set. The spectrum of $P_{\eps}$ near the origin is confined to a band
of height ${\cal O}(\eps)$, and to study the imaginary parts of the eigenvalues in the band, following the classical averaging
method~\cite{Weinstein77},~\cite{Guillemin81}, we introduce the time averages
\begeq
\label{q_av_0}
\langle{q}\rangle_T  = \frac{1}{T}\int_0^T q\circ \exp(tH_p)\,dt,\quad T>0,
\endeq
of $q$ along the classical trajectories of $p$.

\medskip
\noindent
The main focus of the works~\cite{HiSjVu} and \cite{HiSj08} was on the case when the $H_p$--flow is completely integrable. The real energy surface
$p^{-1}(0)$ is then foliated by invariant Lagrangian tori, along with possibly some other more complicated invariant sets. Given an invariant torus
$\Lambda\subset p^{-1}(0)$, which is Diophantine (i.e. the rotation number of the $H_p$--flow along $\Lambda$ is poorly approximated by rational numbers), or more generally,
irrational, then the time averages $\langle{q}\rangle_T$ along $\Lambda$ converge to the space average $\langle{q}\rangle(\Lambda)$
of $q$ over $\Lambda$, as $T\to \infty $. When $\Lambda $ is a torus with a rational rotation number, or a more general singular invariant set in the
foliation of the energy surface $p^{-1}(0)$, then we need to consider the whole interval $Q_\infty (\Lambda)$ of limits of the flow averages above.

\medskip
\noindent
The principal result of~\cite{HiSjVu} says, somewhat roughly, that if $F_0\in \real$ is a value such that
$F_0=\langle{q}\rangle(\Lambda_0)$ for a single Diophantine torus $\Lambda_0 \subset p^{-1}(0)$, and $F_0$ does
not belong to $Q_\infty (\Lambda)$ for any other invariant set $\Lambda$ in
the energy surface, then the spectrum of $P_{\eps}$ can be completely determined in a rectangle $[-h^\delta /C,h^\delta /C]+i\eps
[F_0-h^\delta /C,F_0+h^\delta /C]$ modulo ${\cal O}(h^{\infty })$,
where $\delta$ is a positive exponent that can be chosen
arbitrarily small, and $\eps$ may vary in any interval of the form $h^K<\eps \ll 1$. The spectrum has a structure of a distorted lattice, with
horizontal spacing $\sim h$ and vertical spacing $\sim \eps h$. In the work~\cite{HiSj08}, we continued the analysis of the completely integrable case
by investigating what happens when the value $F_0$ belongs in addition to finitely many intervals $Q_{\infty}(\Lambda)$, corresponding to
rational invariant tori $\Lambda$. It was shown in~\cite{HiSj08} that the number of eigenvalues that can be created by such tori is much smaller than the number of eigenvalues
coming from the Diophantine ones, provided that the strength $\eps$ of the non-selfadjoint perturbation satisfies $h\ll \eps \leq h^{2/3+\delta}$,
$\delta>0$, and assuming that $F_0\in Q_{\infty}(\Lambda)\backslash \langle{q}\rangle(\Lambda)$, for each rational torus $\Lambda$.

\medskip
\noindent
The purpose of the present paper is to investigate the global distribution of the imaginary parts of eigenvalues of $P_{\eps}$ in an intermediate
spectral band, bounded from above and from below by levels such as $F_0$, described above. In fact,
when doing so, we shall refrain from treating the more general configurations considered in~\cite{HiSj08}, with both Diophantine and rational tori
present, and shall concentrate instead on the simpler situation of~\cite{HiSjVu}, where only Diophantine invariant tori
corresponding to $F_0$, occur. That such a study is planned by the authors was mentioned already in the introduction of~\cite{HiSjVu}, and here we are finally able
to present the result, giving a Weyl type asymptotic formula for the number of eigenvalues of $P_{\eps}$ in such a spectral band. Roughly speaking,
the main result of the present paper is as follows: let $F_j\in \real$, $j=1,3$, $F_3 < F_1$, be such that $F_j=\langle{q}\rangle(\Lambda_j)$,
where $\Lambda_j \subset p^{-1}(0)$ are Diophantine tori, and assume that $F_j$ does not belong to $Q_{\infty}(\Lambda)$ when $\Lambda_j\neq \Lambda$
is an invariant set, $j=1,3$. Let $E_2<0<E_4$ be close enough to $0\in \real$. Then the number of eigenvalues of $P_{\eps}$ in the rectangle
$[E_2,E_4] + i\eps [F_3,F_1]$ is equal to
$$
\frac{1}{(2\pi h)^2}{\rm vol}\left(\bigcup_{E_2\leq E\leq E_4} \Omega(E)\right)\left(1 + o(1)\right),\quad h\rightarrow 0.
$$
Here the set
$$
\Omega(0)= \{ \rho \in p^{-1}(0), Q_{\infty}(\Lambda(\rho))\subset [F_3,F_1]\}
$$
is flow-invariant, with $\partial \Omega(0) = \Lambda_1 \cup \Lambda_3$. The flow-invariant sets $\Omega(E)\subset p^{-1}(E)$, $E_2\leq E \leq E_4$ are
defined similarly --- see the precise statement of Theorem \ref{theo_main} below. We may say therefore that the imaginary parts of the eigenvalues of
$P_{\eps}$ are distributed according to a Weyl law, expressed in terms of the long-time averages of the leading non-selfadjoint
perturbation $q$ along the classical flow of $p$.

\medskip
\noindent
We would like to conclude the introduction by mentioning the work of Shni\-relman~\cite{Shnirelman} in the two-dimensional KAM-type situation, which
contains the idea of exploiting invariant Lagrangian tori to separate the real energy surface into different invariant regions, in order to study the
asymptotic multiplicity of the spectrum of the Laplacian. See also~\cite{Colin}. In our non-selfadjoint case, the idea of using
invariant tori as barriers for the flow becomes more efficient than in the standard selfadjoint setting, and the main result of this work can be considered as a
justification of this statement.

\medskip
\noindent
{\bf Acknowledgment}. We would like to thank San V\~u Ng\d{o}c for some helpful advice. Part of this projet was conducted when
the first author visited Universit\'e de Bourgogne in June and December of 2009. It is a
great pleasure for him to thank the Institut de Math\'ematiques de Bourgogne for a generous hospitality. The partial support of his
research by the National Science Foundation under grant DMS-0653275 and by the Alfred P. Sloan Research Fellowship is also gratefully acknowledged.
The second author acknowledges the support from a contract FABER of the Conseil r\'egional de Bourgogne as well as the contracts ANR-08-BLAN-0228-01
and JC05-52556 of the Agence Nationale de la Recherche.

\section{Assumptions and statement of main result}
\label{statement}
\setcounter{equation}{0}
\subsection{General assumptions}
We shall describe the general assumptions on our operators, which will be the same as
in~\cite{HiSjVu},~\cite{HiSj08}, as well as in the earlier papers in this series. Let $M$ stand for
either the space $\real^2$ or a real analytic compact manifold of dimension two. We shall let $\widetilde{M}$ denote a
complexification of $M$, so that $\widetilde{M}=\comp^2$ in the Euclidean case, and in the compact case, we let
$\widetilde{M}$ be a Grauert tube of $M$ --- see~\cite{GuSt} for the definition and further references.

When $M=\real^2$, let
\begeq
\label{eq1}
P_{\eps}=P^w(x,hD_x,\eps;h),\quad 0<h \leq 1,
\endeq
be the $h$--Weyl quantization on $\real^2$ of a symbol $P(x,\xi,\eps;h)$ (i.e. the Weyl quantization of
$P(x,h \xi,\eps;h)$), depending smoothly on $\eps\in {\rm neigh}(0,[0,\infty))$ and taking values in
the space of holomorphic functions of $(x,\xi)$ in a tubular \neigh{}
of $\real^4$ in $\comp^4$, with
\begeq
\label{eq2}
\abs{P(x,\xi,\eps;h)}\leq {\cal O}(1) m(\Re (x,\xi)),
\endeq
there. Here $m\geq 1$ is an order function on $\real^4$, in the sense that
\begeq
\label{eq3}
m(X)\leq C_0 \langle{X-Y}\rangle^{N_0} m(Y),\quad X,\,Y\in \real^4,
\endeq
for some fixed $C_0$, $N_0>0$. We shall assume, as we may, that $m$ belongs to its own symbol class, so that
$m\in C^{\infty}(\real^4)$ and
$\partial^{\alpha}m={\cal O}_{\alpha}(m)$ for each $\alpha\in \nat^4$.

\medskip
Assume furthermore that as $h\rightarrow 0$,
\begeq
\label{eq4}
P(x,\xi,\eps;h)\sim \sum_{j=0}^{\infty} h^j p_{j,\eps}(x,\xi)
\endeq
in the space of holomorphic functions satisfying (\ref{eq2}) in a
fixed tubular \neigh{} of $\real^4$. We make the basic assumption of ellipticity near infinity,
\begeq
\label{eq5}
\abs{p_{0,\eps}(x,\xi)}\geq \frac{1}{C} m(\Re (x,\xi)),\quad \abs{(x,\xi)}\geq C,
\endeq
for some $C>0$.

\medskip
When $M$ is a compact manifold, we shall take $P_{\eps}$ to be an $h$--differential operator on $M$,
such that for every choice of local coordinates, centered at some point of $M$, it takes the form
\begeq
\label{eq6}
P_{\eps}=\sum_{\abs{\alpha}\leq m} a_{\alpha,\eps}(x;h)(hD_x)^{\alpha},
\endeq
where $a_{\alpha,\eps}(x;h)$ is a smooth function of $\eps\in {\rm neigh}(0,[0,\infty))$ with values
in the space of bounded holomorphic functions in a complex \neigh{} of $x=0$. We further assume that
\begeq
\label{eq7}
a_{\alpha,\eps}(x;h)\sim \sum_{j=0}^{\infty} a_{\alpha,\eps,j}(x) h^j,\quad h\rightarrow 0,
\endeq
in the space of such functions. The semiclassical principal symbol $p_{0,\eps}$, defined on $T^*M$, takes the form
\begeq
\label{eq8}
p_{0,\eps}(x,\xi)=\sum a_{\alpha,\eps,0}(x)\xi^{\alpha},
\endeq
if $(x,\xi)$ are canonical coordinates on $T^*M$. We make the ellipticity assumption,
\begeq
\label{eq9}
\abs{p_{0,\eps}(x,\xi)}\geq \frac{1}{C} \langle{\xi}\rangle^m,\quad (x,\xi)\in T^*M,\quad \abs{\xi}\geq C,
\endeq
for some large $C>0$. Here we assume that $M$ has been equipped with some real analytic Riemannian metric, so
that $\abs{\xi}$ and $\langle{\xi}\rangle=(1+\abs{\xi}^2)^{1/2}$ are well-defined. We shall consider the operator $P_{\eps}$ as an unbounded
operator: $L^2(M,\mu(dx))\rightarrow L^2(M,\mu(dx))$, with the domain $H^m(M)$, the standard Sobolev space of order $m$. Here $\mu(dx)$ is the
Riemannian volume element on $M$.

\bigskip
Back in the Euclidean case, the ellipticity assumption (\ref{eq5}) implies that, for $h>0$ small enough and when
equipped with the domain
\begeq
\label{eq9.0}
H(m):=\left(m^w(x,hD)\right)^{-1}\left(L^2(\real^2)\right),
\endeq
the operator $P_{\eps}$ becomes closed and densely defined on $L^2(\real^2)$. We shall furthermore make the following assumption concerning the growth
of the order function $m(X)$ at infinity: for some $k\in \nat$, we have
\begeq
\label{eq9.1}
\frac{1}{m^k} \in L^1(\real^4).
\endeq
Since $P_{\eps}$ can be replaced by its $k$-th power $P_{\eps}^k$ in what follows, we may and will assume henceforth that $k=1$. The assumption that
\begeq
\label{eq9.2}
\frac{1}{m} \in L^1(\real^4)
\endeq
will prove very useful in various trace class considerations throughout the paper. In the compact case, it amounts to assuming that the order $m$
of $P_{\eps}$ satisfies $m>2$.

\medskip
\noindent
We shall write $p_{\eps}$ for $p_{0,\eps}$ and simply $p$ for $p_{0,0}$. We make the assumption that
\begeq
\label{eq9.3}
P_{\eps=0}\quad \wrtext{is formally selfadjoint}.
\endeq

\medskip
It follows from the assumptions (\ref{eq5}), (\ref{eq9}), and (\ref{eq9.3}) that the spectrum of $P_{\eps}$ in a fixed \neigh{} of $0\in \comp$ is
discrete, when $0<h\leq h_0$, $0\leq \eps\leq \eps_0$, with $h_0>0$, $\eps_0>0$ sufficiently small. Moreover, if
$z\in {\rm neigh}(0,\comp)$ is an eigenvalue of $P_{\eps}$ then $\Im z ={\cal O}(\eps)$.

\medskip
We furthermore assume that the real energy surface $p^{-1}(0)\cap T^*M$ is connected and that
\begeq
\label{eq9.4}
dp\neq 0\quad \wrtext{along}\quad p^{-1}(0)\cap T^*M.
\endeq

\bigskip
\noindent
We then have the following general result, stating that the distribution of the real parts the eigenvalues of $P_{\eps}$ near $0\in \comp$ is
governed by the usual Weyl law of the form (\ref{Weyl0}), with the following remainder estimate,
\begeq
\label{eq10}
\# \{z \in {\rm Spec}(P_{\eps}); \, E_1 \leq {\rm Re}\,z \leq E_2\} = \frac{1}{(2\pi h)^n}
\int\!\!\!\int_{p^{-1}([E_1,E_2])} dx\,d\xi + {\cal O}\left({\rm max}(\eps,h)h^{-n}\right).
\endeq
Here $E_j\in {\rm neigh}(0,\real)$, $j=1,2$, are close enough to $0$ but independent of $h$. In fact, in
(\ref{eq10}) we may allow $E_j$ to depend on $h$ with $E_2 - E_1 \geq c {\rm max}(\eps,h)$, for some $c>0$ fixed. The Weyl law (\ref{eq10}) is obtained
by following the general arguments of section 5 of~\cite{Sj00}, which are based on an adaptation to the semiclassical case of the ideas
in~\cite{Ma},~\cite{MaMa}. See also~\cite{A}.

Let $H_p=p'_{\xi}\cdot \partial_x-p'_x\cdot \partial_{\xi}$ be the Hamilton field of $p$. Throughout this paper, we shall work under the
assumption that the $H_p$--flow is completely integrable. Following~\cite{HiSj08}, let us now proceed to discuss the precise assumptions on the
geometry of the energy surface $p^{-1}(0)\cap T^*M$ in this case.

\subsection{Assumptions related to the complete integrability}

We assume that there exists an analytic real valued function $f$ defined near $p^{-1}(0)\cap T^*M$ such that $H_pf=0$,
with the differentials $df$ and $dp$ being linearly independent on an open and dense set $\subset {\rm neigh}(p^{-1}(0)\cap T^*M, T^*M)$.
For each $E\in {\rm neigh}(0,\real)$,  the level sets
$$
\Lambda_{\mu,E}=f^{-1}(\mu)\cap p^{-1}(E)\cap T^*M
$$
are invariant under the $H_p$--flow and form a singular foliation of the three-dimensional hypersurface $p^{-1}(E)\cap T^*M$. When
$(\mu,E)\in \real^2$ is such that $df\wedge dp\neq 0$ along $\Lambda_{\mu,E}$, then $\Lambda_{\mu,E}$ is a two-dimensional real analytic Lagrangian
submanifold of $T^*M$, which is a finite union of tori. In what follows
we shall use the word "leaf" and notation $\Lambda$ for a connected component of some $\Lambda_{\mu,E}$. Let $J$ be the set of all leaves in
$p^{-1}(0)\cap T^*M$. We have a disjoint union decomposition
\begeq
\label{eq12}
p^{-1}(0)\cap T^*M=\bigcup_{\Lambda \in J} \Lambda,
\endeq
where $\Lambda$ are compact flow--invariant connected sets.

The set $J$ has a natural structure of a graph whose edges correspond to families of regular leaves and the set $S$ of vertices is
composed of singular leaves. The union of edges $J\backslash S$ possesses a natural real
analytic structure and the corresponding Lagrangian tori depend analytically on $\Lambda\in J\backslash S$ with respect to that structure.

For simplicity, we shall assume that $f$ is a Morse-Bott function restricted to $p^{-1}(0)\cap T^*M$. In this case, the structure of
the singular leaves is known~\cite{BF}, \cite{San}, and the set $J$ is a finite connected graph. We shall identify each edge of $J$ analytically with a
real bounded interval and this determines a distance on $J$ in the natural way. The following continuity property holds,
\begin{eqnarray}
\label{13}
& & \wrtext{For every}\,\,\,\Lambda_0\in J\,\,\wrtext{and
every}\,\,\eps>0,\, \wrtext{there exists}\, \delta>0,\,\,\wrtext{such that if} \\ \nonumber
& & \Lambda\in J,\,\,{\rm
dist}_J(\Lambda,\Lambda_0)<\delta,\,\,\wrtext{then}\,\, \Lambda\subset
\{\rho\in p^{-1}(0)\cap T^*M;\, {\rm dist}(\rho,\Lambda_0)<\eps\}.
\end{eqnarray}

\medskip
By the Arnold-Mineur-Liouville theorem~\cite{D}, each torus $\Lambda\in J\backslash S$ carries real analytic
coordinates $x_1$, $x_2$ identifying $\Lambda$ with ${\bf T}^2 :=
\real^2/2\pi \z^2$, so that along $\Lambda$, we have
\begeq
\label{eq14}
H_p=a_1\partial_{x_1}+a_2\partial_{x_2},
\endeq
where $a_1$, $a_2\in \real$. The rotation number is defined as the ratio
\begeq
\label{eq14.1}
\omega(\Lambda)=[a_1:a_2]\in \real {\bf P}^1,
\endeq
and it depends analytically on $\Lambda\in J\backslash S$.

\bigskip
\noindent
In what follows we shall write
\begeq
\label{eq11}
p_{\eps}=p+i\eps q+{\cal O}(\eps^2),
\endeq
in a \neigh{} of $p^{-1}(0)\cap T^*M$, and for simplicity we shall assume throughout this paper that
$q$ is real valued on the real domain. (In the general case, we should simply replace $q$ below by $\Re q$.)
For each torus $\Lambda\in J\backslash S$, we define the torus
average $\langle{q}\rangle(\Lambda)$ obtained by integrating $q|_{\Lambda}$ with respect to the natural smooth
measure on $\Lambda$, and assume that the analytic function $J\backslash S \ni \Lambda\mapsto \langle{q}\rangle(\Lambda)$
is not identically constant on any open edge. Here the integration measure used in the definition of $\langle{q}\rangle(\Lambda)$ comes
from the diffeomorphism between $\Lambda$ and ${\bf T}^2$, given by the action-angle coordinates.

\medskip
We introduce
\begeq
\label{eq15}
\langle{q}\rangle_T=\frac{1}{T} \int_{0}^{T} q\circ
\exp(tH_p)\,dt,\quad T>0,
\endeq
and consider the compact intervals $Q_{\infty}(\Lambda)\subset \real$, $\Lambda\in J$, defined as in~\cite{HiSjVu},~\cite{HiSj08},
\begeq
\label{eq16}
Q_{\infty}(\Lambda)=\left[\lim_{T \rightarrow \infty} \inf_{\Lambda}
\langle{q}\rangle_T, \lim_{T \rightarrow \infty}
\sup_{\Lambda}\langle{q}\rangle_T\right].
\endeq
Notice that when $\Lambda\in J\backslash S$ and $\omega(\Lambda)\notin {\bf Q}$ then
$Q_{\infty}(\Lambda)=\{\langle{q}\rangle(\Lambda)\}$. In the rational case,
we write $\omega(\Lambda)=\frac{m}{n}$, where $m\in \z$ and $n\in {\bf N}$ are
relatively prime, and where we may assume that $m={\cal
O}(n)$. When $k(\omega(\Lambda)):=\abs{m}+\abs{n}$ is the height of $\omega(\Lambda)$,
we recall from Proposition 7.1 in~\cite{HiSjVu} that
\begeq
\label{eq17}
Q_{\infty}(\Lambda)\subset \langle{q}\rangle(\Lambda)+{\cal
O}\left(\frac{1}{k(\omega(\Lambda))^{\infty}}\right)[-1,1].
\endeq

\medskip
\noindent
{\it Remark}. As $J\backslash S\ni \Lambda \rightarrow \Lambda_0\in S$,
the set of all accumulation points of $\langle{q}\rangle(\Lambda)$ is contained in the interval
$Q_{\infty}(\Lambda_0)$. Indeed, when $\Lambda\in J\backslash S$ and $T>0$, there exists
$\rho=\rho_{T,\Lambda}\in \Lambda$ such that
$\langle{q}\rangle(\Lambda)=\langle{q}\rangle_T(\rho)$. Therefore,
each accumulation point of $\langle{q}\rangle(\Lambda)$ as
$\Lambda\rightarrow \Lambda_0\in S$, belongs to
$[\inf_{\Lambda_0}\langle{q}\rangle_T,
\sup_{\Lambda_0}\langle{q}\rangle_T]$. The conclusion follows if we let
$T \rightarrow \infty$.

\subsection{Statement of the main result}

From Theorem 7.6 in~\cite{HiSjVu} we recall that
\begeq
\label{eq18}
\frac{1}{\eps}\Im \left({\rm Spec}(P_{\eps})\cap \{z; \abs{\Re z}\leq \delta\}\right)
\subset \left[\inf \bigcup_{\Lambda\in J} Q_{\infty}(\Lambda)-o(1),
\sup \bigcup_{\Lambda\in J} Q_{\infty}(\Lambda)+o(1)\right],
\endeq
as $\eps$, $h$, $\delta\rightarrow 0$. Let us also recall
from~\cite{HiSjVu} that a torus $\Lambda\in J\backslash S$ is said to be
Diophantine if representing $H_p|_{\Lambda} =
a_1\partial_{x_1}+a_2\partial_{x_2}$, as in (\ref{eq14}), we have
\begeq
\label{eq19}
\abs{a\cdot k}\geq \frac{1}{C_0 \abs{k}^{N_0}},\quad 0\neq k\in \z^2,
\endeq
for some fixed $C_0$, $N_0>0$.

Let
$$
F_j\in \bigcup_{\Lambda\in J} Q_{\infty}(\Lambda),\quad j=1,3,\quad F_3 < F_1,
$$
be such that there exist finitely many Lagrangian tori
\begeq
\label{eq20}
\Lambda_{1,j},\ldots\, ,\Lambda_{L_j,j}\in J\backslash S,\quad j=1,3,
\endeq
that are uniformly Diophantine as in (\ref{eq19}), and such that
\begeq
\label{eq21}
\langle{q}\rangle(\Lambda_{k,j})=F_j\quad \wrtext{for}\,\, 1\leq k\leq L_j,\quad j=1,3,
\endeq
with
\begeq
\label{eq22}
d_{\Lambda}\langle{q}\rangle(\Lambda_{k,j})\neq 0,\quad 1\leq k\leq L_j,\quad j=1,3.
\endeq

\bigskip
\noindent
We shall make the following global assumptions, for $j=1,3$:
\begeq
\label{eq23}
F_j \notin \bigcup_{\Lambda \in J \backslash \{\Lambda_{1,j},\ldots, \Lambda_{L_j,j}\}}Q_{\infty}(\Lambda).
\endeq
Here we recall from~\cite{HiSj08} that the earlier assumptions imply that for $j=1,3$,
$F_j \notin Q_{\infty}(\Lambda)$ for $\Lambda_{k,j}\neq \Lambda \in {\rm
neigh}(\Lambda_{k,j},J)$, $1\leq k\leq L_j$.

\medskip
\noindent
For notational simplicity only, throughout the following discussion we shall assume
that $L_1=L_3=1$ and we shall then write $\Lambda_1:=\Lambda_{1,1}$ and $\Lambda_3:=\Lambda_{1,3}$ for the corresponding
Diophantine tori $\subset p^{-1}(0)\cap T^*M$.


\bigskip
\noindent
Assume that the strength of the non-selfadjoint perturbation $\eps$ satisfies
\begeq
\label{eq25.01}
h^K \leq \eps \leq h^{\delta},\quad \delta>0,
\endeq
for some fixed $K\geq 1$. In the work~\cite{HiSjVu}, under the assumptions (\ref{eq23}), (\ref{eq25.01}), for $h>0$ small enough,
we have determined all eigenvalues of $P_{\eps}$ mod ${\cal O}(h^{\infty})$, in a spectral window of the form
\begeq
\label{eq25}
\left[ -\frac{\eps^{\widetilde{\delta}}}{C}, \frac{\eps^{\widetilde{\delta}}}{C}\right] + i\eps \left[F_j - \frac{\eps^{\widetilde{\delta}}}{C},
F_j + \frac{\eps^{\widetilde{\delta}}}{C}\right], \quad j=1,3.
\endeq
Here $\widetilde{\delta}>0$ is sufficiently small and the constant $C>0$ is sufficiently large. Recall also that the eigenvalues of $P_{\eps}$
in the region (\ref{eq25}) form a distorted lattice, and their total number is
$$
\sim \frac{\eps^{2\widetilde{\delta}}}{h^2}.
$$

\medskip
\noindent
Our purpose here is to study the distribution of the {imaginary} parts of the eigenvalues of $P_{\eps}$ in suitable "large" sub-bands of the
entire spectral band
$$
\{z\in {\rm neigh}(0,\comp);\, \Im z = {\cal O}(\eps)\}.
$$
Specifically, we shall be interested in counting the number of eigenvalues of $P_{\eps}$ in a region of the form
\begeq
\label{eq25.1}
\left[-\frac{\eps^{\widetilde{\delta}}}{C}, \frac{\eps^{\widetilde{\delta}}}{C} \right] + i\eps \left[F_3,F_1\right],
\endeq
bounded from above and from below by the Diophantine levels $F_j$, $j=1,3$, introduced above.

\bigskip
\noindent
The following is the main result of this work.

\begin{theo}
\label{theo_main}
Let $P_{\eps}$ satisfy the assumptions made in subsections {\rm 2.1} and {\rm 2.2}, and in particular, {\rm (\ref{eq9.2})}.
Let $F_j\in \cup_{\Lambda\in J} Q_{\infty}(\Lambda)$, $j=1,3$, $F_3<F_1$, be such that the assumptions {\rm (\ref{eq21})}, {\rm (\ref{eq22})},
{\rm (\ref{eq23})} are satisfied. Let $0<\delta < 1 < K<\infty$ and assume that $h^K \leq \eps \leq h^{\delta}$. Let $E_2<0<E_4$
satisfy $\abs{E_j}\sim \eps^{\widetilde{\delta}}$, where $0<\widetilde{\delta}<1/3$ is small enough, so that
$$
h\leq \eps^{10\widetilde{\delta}}\log\frac{1}{\eps}.
$$
Then the number of eigenvalues of $P_{\eps}$ in the rectangle
\begeq
\label{eq25.2}
R  = \left(E_2, E_4\right) + i \eps (F_3,F_1),
\endeq
counted with their algebraic multiplicities, is equal to
\begeq
\label{Weyl}
\frac{1}{(2\pi h)^2}\left({\rm vol}\left(\bigcup_{E_2\leq E\leq E_4} \Omega(E)\right) +
{\cal O}\left(\eps^{3 \widetilde{\delta}}\log \frac{1}{\eps}\right)\right).
\endeq
Here the set
\begeq
\label{eq26}
\Omega(0)=\left\{\rho \in p^{-1}(0)\cap T^*M; Q_{\infty}(\Lambda(\rho))\subset [F_3,F_1]\right\},
\endeq
is flow-invariant, whose boundary is $\Lambda_1\cup \Lambda_3$. We define the set $\Omega(E)\subset p^{-1}(E)\cap T^*M$ to be the set which is close to $\Omega(0)$ with
boundary $\Lambda_1(E)\cup \Lambda_3(E)$ where $\Lambda_j(E)$ is the unique invariant torus $\subset p^{-1}(E)\cap T^*M$ close to $\Lambda_j$,
determined, thanks to {\rm (\ref{eq22})}, by the condition $\langle{q}\rangle(\Lambda_j(E))=F_j$, $j=1,3$.
\end{theo}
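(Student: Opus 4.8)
The plan is to count the eigenvalues of $P_\eps$ in $R$ by reducing the problem, via a global Grushin problem and the quantum Birkhoff normal forms of the earlier papers in this series, to a lattice point count governed by a Bohr--Sommerfeld quantization condition on the regular part of the singular foliation of $p^{-1}([E_2,E_4])\cap T^*M$. The regular part will produce the main term $(2\pi h)^{-2}\,\vol\big(\bigcup_{E_2\le E\le E_4}\Omega(E)\big)$, while the finitely many singular leaves, the low-height rational tori, and the boundary tori $\Lambda_1,\Lambda_3$ will be shown, by trace class a priori estimates resting on (\ref{eq9.2}), to carry only ${\cal O}(\eps^{3\widetilde{\delta}}\log\frac1\eps)\,h^{-2}$ eigenvalues; this is the remainder in (\ref{Weyl}), and it is $o(1)$ relative to the main term, which is $\sim\eps^{\widetilde{\delta}}h^{-2}$, since $\widetilde{\delta}<1/3$.

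Following \cite{HiSjVu}, I would conjugate $P_\eps$ by an exponential weight adapted to $p^{-1}([E_2,E_4])$ and set up a well-posed global Grushin problem for $P_\eps-z$, $z$ in a fixed complex neighborhood of $\overline R$; by the ellipticity assumptions (\ref{eq5}), (\ref{eq9}), $P_\eps-z$ is invertible away from $p^{-1}([E_2-c,E_4+c])$, so the construction localizes to a neighborhood of $p^{-1}([E_2,E_4])$ in $T^*M$, and the number of eigenvalues of $P_\eps$ in $R$, counted with algebraic multiplicity, equals the number of zeros in $R$ of the determinant of the effective Hamiltonian of the Grushin problem. Over each regular edge of the graph $J$, away from neighborhoods of the finitely many singular leaves and of the rational tori of height below a threshold $k_0\sim\log\frac1\eps$, the tori in the relevant range are uniformly Diophantine as in (\ref{eq19}), and the quantum Birkhoff normal form conjugates $P_\eps$ there, microlocally and modulo errors that are ${\cal O}(h^\infty)$, to $\widehat p(hD_x;h)+i\eps\,\widehat{\langle{q}\rangle}(hD_x;h)+{\cal O}(\eps^2)$ acting on $L^2(\T^2)$, $\T^2=\real^2/2\pi\z^2$; the Diophantine condition controls the small divisors in this reduction, and the choice $k_0\sim\log\frac1\eps$ makes the residual error of the partial normal form near the excised rational tori smaller than any power of $\eps$. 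Hence on such a chart the eigenvalues of $P_\eps$ in $R$ are, modulo the stated errors, the points $z_k=\widehat p(h(k-\theta))+i\eps\,\widehat{\langle{q}\rangle}(h(k-\theta))+{\cal O}(\eps^2)$, $k\in\z^2$, with $h(k-\theta)$ in the quantized range of actions and $\theta$ a fixed Maslov/subprincipal shift; by (\ref{eq22}) the map $\xi\mapsto(p(\xi),\langle{q}\rangle(\xi))$ is a local diffeomorphism near the actions of $\Lambda_1$ and $\Lambda_3$.

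For each such edge the number of $k\in\z^2$ with $z_k\in R$ equals the number of points of the lattice $h(\z^2-\theta)$ in the action region $\mathcal A=\{\xi:\ p(\xi)\in(E_2,E_4),\ \langle{q}\rangle(\xi)\in(F_3,F_1)\}$, which is $|\mathcal A|/h^2+{\cal O}(1/h)$, since $\mathcal A$ is a real analytic slab of thickness $\sim\eps^{\widetilde{\delta}}$ and perimeter ${\cal O}(1)$; and ${\cal O}(1/h)\ll\eps^{3\widetilde{\delta}}(\log\frac1\eps)h^{-2}$ by the hypothesis $h\le\eps^{10\widetilde{\delta}}\log\frac1\eps$. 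Summing over the finitely many regular edges meeting $\Omega$, and using that in action--angle coordinates $\bigcup_{E_2\le E\le E_4}\Omega(E)$ is, up to lower-dimensional boundary layers, the disjoint union of the sets $\T^2\times\mathcal A$ — the angle variables contributing the factor $(2\pi)^2$, and the defining condition $Q_\infty(\Lambda(\rho))\subset[F_3,F_1]$ reducing on the regular part to $\langle{q}\rangle(\Lambda(\rho))\in(F_3,F_1)$ up to an action layer of width ${\cal O}(k^{-\infty})$ near rational tori of height $k$, by (\ref{eq17}) — one obtains the main term of (\ref{Weyl}). Near the cuts $\Im z=\eps F_j$ one must still decide on which side of $\{\Im z=\eps F_j\}$ each nearby eigenvalue lies: by (\ref{eq18}), the global assumption (\ref{eq23}) and its consequence, and the monotonicity of $\langle{q}\rangle$ from (\ref{eq22}), any eigenvalue with $\Im z/\eps$ within ${\cal O}(\eps^{\widetilde{\delta}})$ of $F_j$ is microlocalized near $\Lambda_j$ and so lies in the window (\ref{eq25}), where \cite{HiSjVu} determines the eigenvalues modulo ${\cal O}(h^\infty)$; the layer of actions for which the side of the cut remains ambiguous then has width ${\cal O}(h^\infty)$ and is negligible.

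It remains to bound the eigenvalues of $P_\eps$ in $R$ microlocalized in an ${\cal O}(\eps^{2\widetilde{\delta}})$-neighborhood of the bad set $\mathcal B$ — the union of the singular leaves, the rational tori of height $\le k_0$ contained in $p^{-1}([E_2,E_4])$, and the boundary tori $\Lambda_1(E)$, $\Lambda_3(E)$ for $E\in[E_2,E_4]$; by (\ref{eq18}) and (\ref{eq23}) no eigenvalue in $R$ escapes a neighborhood of $\overline\Omega$, so this neighborhood of $\mathcal B$ together with the regular region treated above exhausts the spectrum in $R$ modulo ${\cal O}(h^\infty)$. For $\mathcal B$ I would localize $P_\eps-z$ by a pseudodifferential cutoff supported in that neighborhood and, arguing as in the derivation of the real-part Weyl law (\ref{eq10}) along the lines of section 5 of \cite{Sj00} and of the trace class machinery built on (\ref{eq9.2}), bound the number of eigenvalues in the cut-off region by ${\cal O}(1)$ times the phase space volume of the cutoff support over $(2\pi h)^2$, with an extra factor $\log\frac1\eps$ coming from the resolvent estimate at distance $\sim\eps$ from the band edge and from the number ${\cal O}(\log\frac1\eps)$ of relevant rational tori. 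Since each bad torus sweeps, as $E$ runs over $[E_2,E_4]$, a three-dimensional set of four-volume ${\cal O}(\eps^{\widetilde{\delta}})$, its ${\cal O}(\eps^{2\widetilde{\delta}})$-tube has four-volume ${\cal O}(\eps^{3\widetilde{\delta}})$, and the total contribution stays ${\cal O}(\eps^{3\widetilde{\delta}}\log\frac1\eps)\,h^{-2}$. I expect the main obstacle to be precisely this uniform control over the entire band $i\eps[F_3,F_1]$ rather than over a single small window around one Diophantine level as in \cite{HiSjVu}, \cite{HiSj08}: one has to combine the Bohr--Sommerfeld count on the regular part, the gluing of the local reductions along $J$ with consistent Maslov and subprincipal data, and the trace class a priori bounds near the singular leaves and the rational tori, all calibrated so that the $\eps$-dependent thresholds ($k_0$ and the widths of the excised tubes) balance and the aggregate remainder is exactly ${\cal O}(\eps^{3\widetilde{\delta}}\log\frac1\eps)$.
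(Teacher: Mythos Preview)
Your approach is fundamentally different from the paper's, and it contains a genuine gap.

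The paper does \emph{not} attempt to describe the eigenvalues throughout the interior of the band $i\eps[F_3,F_1]$ via Bohr--Sommerfeld rules. Instead it writes the eigenvalue count as the trace of the spectral projection
\[
{\rm tr}\,\frac{1}{2\pi i}\int_{\gamma}(z-P_{\eps})^{-1}\,dz,\qquad \gamma=\partial R,
\]
and analyzes this contour integral segment by segment. On the vertical segments $\gamma_2,\gamma_4$ (where $\Re z=E_j$) it introduces a trace class pseudodifferential perturbation supported where $\abs{p-E_j}\lesssim\eps^{3\widetilde{\delta}}$ to create a spectral gap, and controls the resulting perturbation determinant via Jensen's formula and the Cartan--Harnack argument, in the Markus--Matsaev/Sj\"ostrand style. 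On the horizontal segments $\gamma_1,\gamma_3$ (where $\Im z=\eps F_j$) it introduces a partition of unity on the IR-manifold $\Lambda$: away from the tori $\widehat{\Lambda}_j$ one is either in an essentially elliptic region or in a region where $\Im P_{\eps}$ stays on one side of $\eps F_j$, and there a further trace class perturbation allows a contour deformation; only in an $\eps^{\widetilde{\delta}}$-neighborhood of $\widehat{\Lambda}_j$ does the Birkhoff normal form from \cite{HiSjVu} enter. The crucial point is that the normal form is invoked \emph{only} near $\Lambda_1$ and $\Lambda_3$, which are the sole tori assumed Diophantine in the theorem. Summing the four segments yields $(2\pi h)^{-2}\int\!\!\int_{p_{\eps}^{-1}(R)}\mu(d\rho)$ plus the stated remainder, and a final comparison with the real phase space via (\ref{eq27.12.2}) gives (\ref{Weyl}). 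The interior of the band is never analyzed eigenvalue by eigenvalue.

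Your gap is the assertion that, after excising neighborhoods of the singular leaves and of the rational tori of height $\le k_0\sim\log\frac1\eps$, ``the tori in the relevant range are uniformly Diophantine as in (\ref{eq19})''. This is false: removing low-height rationals gives no control on the small divisors $\abs{a\cdot k}$ for $\abs{k}>k_0$, and Liouville rotation numbers survive any such excision. Under the hypotheses of the theorem only $\Lambda_1$ and $\Lambda_3$ are Diophantine; there is no quantum Birkhoff normal form available over the interior of $\Omega$, and hence no Bohr--Sommerfeld lattice description of the eigenvalues there. Your proposed edge-by-edge lattice count therefore cannot be carried out, and the global Grushin reduction you sketch has no effective Hamiltonian to feed into. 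The paper's contour-integral strategy is designed precisely to bypass this obstruction: information about the interior tori enters only through the symbol-level volume $\int\!\!\int_{p_{\eps}^{-1}(R)}\mu(d\rho)$, never through any spectral description of $P_{\eps}$ microlocalized there.
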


\medskip
\noindent
{\it Remark}. Combining Theorem \ref{theo_main} together with the Weyl law (\ref{eq10}), we see that as $h\rightarrow 0$, the ratio
$$
\frac{\#\left({\rm Spec}(P_{\eps})\cap R\right)}{\#\left({\rm Spec}(P_{\eps})\cap \left([E_2,E_4]+ i \real\right)\right)}
$$
converges to the expression
$$
\frac{{\int_{p^{-1}(0)} 1_{\Omega(0)}\, {\cal L}(d(x,\xi))}}{\int_{p^{-1}(0)} {\cal L}(d(x,\xi))}.
$$
Here ${\cal L}(d(x,\xi))$ stands for the Liouville measure on $p^{-1}(0)$.

\bigskip
\noindent
The result of Theorem \ref{theo_main} admits a natural extension to the case when the real energy $E\in [E_2,E_4]$ varies in a sufficiently small
but $h$--independent \neigh{} of $0\in \real$, which we now proceed to describe. When doing so, we shall assume that the tori $\Lambda_j$, $j=1,3$,
introduced in (\ref{eq20}), satisfy the following additional isoenergetic assumption,
\begeq
\label{eq26.01}
d_{\Lambda=\Lambda_j} \omega(\Lambda)\neq 0,\quad j=1,3.
\endeq
Here $\omega(\Lambda)$ is the rotation number of $\Lambda$, introduced in (\ref{eq14.1}). Let us represent $\Lambda_j \simeq \{\xi=0\}\subset T^* {\bf T}^2_x$ using the action-angle coordinates near $\Lambda_j$, so that $p=p(\xi)$,
$\omega(\xi) = [\partial_{\xi_1}p(\xi): \partial_{\xi_2}p(\xi)]$. It follows from (\ref{eq26.01}) that the map
$$
{\rm neigh}(0,\real^2)\ni \xi \mapsto \left(p(\xi),\omega(\xi)\right) \in \real \times \real {\bf P}^1
$$
is a local diffeomorphism. There exists therefore an analytic family of Lagrangian tori $\widetilde{\Lambda}_j(E)\subset p^{-1}(E)\cap T^*M$,
$E\in {\rm neigh}(0,\real)$, $j=1,3$, close to $\Lambda_j$, such that $\omega(\widetilde{\Lambda}_j(E)) = \omega(\Lambda_j)$, $j=1,3$. Let us set
when $E\in {\rm neigh}(0,\real)$,
\begeq
\label{eq26.02}
F_j(E) = \langle{q}\rangle(\widetilde{\Lambda}_j(E)),\quad j=1,3,
\endeq
and notice that an application of Theorem \ref{theo_main} together with the results of~\cite{HiSjVu} allow us to conclude that the number of eigenvalues
of $P_{\eps}$ in the region
\begeq
\label{eq26.03}
\left\{ \Re z\in [E_2,E_4];\, F_3(\Re z) \leq \frac{\Im z}{\eps} \leq F_1(\Re z)\right\},\quad E_2<0<E_4,\quad  \abs{E_j}\sim \eps^{\widetilde{\delta}},
\endeq
is given by
\begeq
\label{eq26.04}
\frac{1}{(2\pi h)^2}\left({\rm vol}\left(\bigcup_{E_2\leq E\leq E_4} \widetilde{\Omega}(E)\right) +
{\cal O}\left(\eps^{2\widetilde{\delta}}\right)\right).
\endeq
Here $\widetilde{\Omega}(0) = \Omega(0)$ in (\ref{eq26}) and the set $\widetilde{\Omega}(E)\subset p^{-1}(E)\cap T^*M$ is close to $\Omega(0)$,
with the boundary $\widetilde{\Lambda}_1(E)\cup \widetilde{\Lambda}_3(E)$. Covering a sufficiently small but fixed open interval $J \subset \real$
containing $0\in \real$ by ${\cal O}(\eps^{-\widetilde{\delta}})$ subintervals of length ${\cal O}(\eps^{\widetilde\delta})$ and applying the result of
Theorem \ref{theo_main} in the form (\ref{eq26.04}) uniformly as $E\in J$ varies, we get the following conclusion, by summing the individual
contributions from the subintervals.

\begin{theo}
\label{theo_ext}
Let us keep all the assumptions of Theorem {\rm \ref{theo_main}} and assume in addition that the isoenergetic condition {\rm (\ref{eq26.01})} holds. Let
$C>0$ be sufficiently large and let $E_2 < 0 < E_4$ be independent of $h$ with $\abs{E_j}\leq 1/C$, $j=2,4$. Introduce the functions
$$
F_j(E) = \langle{q}\rangle(\widetilde{\Lambda}_j(E)),\quad E\in {\rm neigh}(0,\real),
$$
for $j=1,3$, where $\widetilde{\Lambda}_j(E) \subset p^{-1}(E)\cap T^*M$ are Lagrangian tori close to $\Lambda_j$,
such that $\omega(\widetilde{\Lambda}_j(E)) = \omega(\Lambda_j)$, $j=1,3$. Then the number of eigenvalues of $P_{\eps}$ in the region
$$
\left \{ E_2 \leq \Re z \leq E_4,\,  F_3(\Re z) \leq \frac{\Im z}{\eps} \leq F_1(\Re z)\right\},
$$
counted with their algebraic multiplicities, is equal to
\begeq
\label{Weyl1}
\frac{1}{(2\pi h)^2}\left({\rm vol}\left(\bigcup_{E_2\leq E\leq E_4} \widetilde{\Omega}(E)\right) +
{\cal O}(\eps^{\widetilde{\delta}})\right).
\endeq
Here $0< \widetilde{\delta}< 1$ satisfies the same smallness condition as in Theorem {\rm \ref{theo_main}}.
\end{theo}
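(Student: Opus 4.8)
\medskip
\noindent
The plan is to deduce Theorem~\ref{theo_ext} from Theorem~\ref{theo_main} by first upgrading the latter to a statement in which the spectral window is centered at an arbitrary energy $E_0$ in a fixed neighborhood of $0$, and then summing the resulting local counting formulas over a partition of $[E_2,E_4]$ into ${\cal O}(\eps^{\widetilde\delta})$-windows. On the classical side everything is controlled by the isoenergetic condition~(\ref{eq26.01}): in action-angle coordinates near $\Lambda_j$ the map $\xi\mapsto(p(\xi),\omega(\xi))$ is a local diffeomorphism, so for every $E$ in a fixed neighborhood of $0$ there is a unique analytic family of Lagrangian tori $\widetilde\Lambda_j(E)\subset p^{-1}(E)\cap T^*M$ close to $\Lambda_j$ with $\omega(\widetilde\Lambda_j(E))=\omega(\Lambda_j)$; being obtained from the uniformly Diophantine torus $\Lambda_j$ of~(\ref{eq19}) by a fixed-size analytic deformation that preserves the direction of the rotation vector, the whole family $\{\widetilde\Lambda_j(E)\}$ is uniformly Diophantine. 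We set $F_j(E)=\langle{q}\rangle(\widetilde\Lambda_j(E))$, which is analytic and, by~(\ref{eq22}), has non-vanishing derivative at $E=0$.

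The first step is to establish the form~(\ref{eq26.04}) of the counting formula, i.e. that over a window $[E_2,E_4]$ with $\abs{E_j}\sim\eps^{\widetilde\delta}$ centered at a given $E_0$ in a small fixed interval around $0$, the number of eigenvalues of $P_{\eps}$ in the curved region~(\ref{eq26.03}) is $(2\pi h)^{-2}(\vol(\bigcup_{E}\widetilde\Omega(E))+{\cal O}(\eps^{2\widetilde\delta}))$. On one hand, the proof of Theorem~\ref{theo_main} goes through with constants independent of $E_0$: the secular analysis near the boundary tori uses only their uniform Diophantine property, which holds for $\widetilde\Lambda_j(E_0)$ by the previous paragraph, and the global escape-function and trace-class arguments away from them require the separation hypothesis~(\ref{eq23}) at energy $E_0$, namely $F_j(E_0)\notin Q_{\infty}(\Lambda)$ for every leaf $\Lambda$ of $p^{-1}(E_0)\cap T^*M$ different from the $\widetilde\Lambda_{k,j}(E_0)$; this is an open condition in $E_0$ — here one invokes the continuity properties of $\Lambda\mapsto Q_{\infty}(\Lambda)$, cf.~(\ref{eq17}) and the Remark following~(\ref{eq17}) — so, once $C$ is large enough, it holds on the whole fixed interval around $0$. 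On the other hand, Theorem~\ref{theo_main} is stated for the straight-sided rectangle $(E_2,E_4)+i\eps(F_3,F_1)$ rather than for the region~(\ref{eq26.03}) bounded by the graphs $\Im z=\eps F_j(\Re z)$; but in the horizontal strips where $\Im z/\eps$ is close to $F_j(\Re z)$ the spectrum is, by the Bohr--Sommerfeld description of~\cite{HiSjVu} together with~(\ref{eq22}), entirely carried by eigenvalues clustering along $\widetilde\Lambda_j(\Re z)$, so we may count precisely those lying on the prescribed side of the curve to within ${\cal O}(h^{\infty})$, while replacing that curve by the horizontal line $\Im z=\eps F_j(E_0)$ alters the region by a set of $(\Re z,\Im z)$-area ${\cal O}(\eps\cdot\eps^{2\widetilde\delta})$, hence changes both the count and the phase-space volume by at most ${\cal O}((2\pi h)^{-2}\eps^{2\widetilde\delta})$. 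This gives~(\ref{eq26.04}).

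The second step is the summation. For $C$ large enough, cover the fixed interval $[E_2,E_4]$, $\abs{E_j}\leq 1/C$, by $N={\cal O}(\eps^{-\widetilde\delta})$ consecutive subintervals $I_1,\dots,I_N$ of length $\sim\eps^{\widetilde\delta}$, choosing the division points $e_0<e_1<\dots<e_N$ so that no eigenvalue of $P_{\eps}$ has real part equal to any $e_k$ (possible since the eigenvalues are discrete and each $e_k$ may be picked from a whole subinterval). Applying~(\ref{eq26.04}) over each open $I_k$ and adding, the eigenvalue counts add exactly, the main terms telescope to $\vol(\bigcup_{E_2\leq E\leq E_4}\widetilde\Omega(E))$ up to the Liouville-null energy surfaces $p^{-1}(e_k)$, and the errors add up to ${\cal O}(N(2\pi h)^{-2}\eps^{2\widetilde\delta})={\cal O}((2\pi h)^{-2}\eps^{\widetilde\delta})$, which is exactly the error in~(\ref{Weyl1}). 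Passing from the open region to the closed one in the statement adds only eigenvalues in thin boundary strips, of which there are ${\cal O}(h^{-1})$, and since $h\leq\eps^{10\widetilde\delta}\log\frac{1}{\eps}\leq\eps^{\widetilde\delta}$ for $\eps$ small this is absorbed in the error. Finally $N\eps^{\widetilde\delta}\sim\abs{E_4-E_2}$ is bounded, so the compatibility conditions $h\leq\eps^{10\widetilde\delta}\log\frac{1}{\eps}$ and $h^K\leq\eps\leq h^{\delta}$ of Theorem~\ref{theo_main} hold uniformly over the $I_k$.

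The main obstacle is the uniformity claimed in the first step: one must check that every ingredient of the (lengthy) proof of Theorem~\ref{theo_main} — the averaging/normal-form reduction near the Diophantine tori, the construction of exponentially weighted spaces and escape functions on the complement, the resolvent and trace-class estimates, and the final counting via the argument principle — carries constants that are locally uniform in the center energy $E_0$, and one must make sure that the structural hypothesis~(\ref{eq23}), as well as the Morse--Bott picture used to organize the leaves $\Lambda$ away from the boundary tori, survives the variation of $E_0$ over a fixed interval. Granting the uniform Diophantine property of $\{\widetilde\Lambda_j(E)\}$, built into~(\ref{eq26.01}), and the persistence of~(\ref{eq23}), this is a mechanical but not entirely trivial re-examination of that proof; everything else above is bookkeeping.
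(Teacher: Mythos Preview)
Your proposal is correct and follows essentially the same approach as the paper: establish the counting formula (\ref{eq26.04}) for $\eps^{\widetilde\delta}$-windows centered at an arbitrary $E_0$ in a fixed neighborhood of $0$ by combining Theorem~\ref{theo_main} with the Bohr--Sommerfeld description from~\cite{HiSjVu}, then cover $[E_2,E_4]$ by ${\cal O}(\eps^{-\widetilde\delta})$ such windows and sum. You have in fact been more explicit than the paper on several points it leaves implicit---the uniformity of the Diophantine constants along the family $\widetilde\Lambda_j(E)$, the persistence of (\ref{eq23}) as an open condition, and the ${\cal O}(\eps^{2\widetilde\delta})$ cost of replacing the curved boundaries $\Im z=\eps F_j(\Re z)$ by straight ones---and your identification of the uniformity verification as the genuine (if mechanical) content is apt.
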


\bigskip
\noindent
In the course of the proof of Theorem \ref{theo_main}, we shall assume, as we may,
that the resolvent $(z-P_{\eps})^{-1}$ exists when $z\in \gamma := \partial R$. We equip $\gamma$ with the positive orientation, and
decompose
\begeq
\label{eq26.1}
\gamma=\gamma_1 \cup \gamma_2 \cup \gamma_3 \cup \gamma_4,
\endeq
where $\gamma_j$ for $j=1,3$, is the part of $\gamma$, where $\Im z = \eps F_j$. Correspondingly, we have
$\Re z = E_j$ along $\gamma_j$, when $j=2,4$. We are going to be concerned with the asymptotic behavior of the trace of
the spectral projection of $P_{\eps}$ associated to the spectrum of $P_{\eps}$ in $R$,
\begeq
\label{eq27}
{\rm tr}\, \frac{1}{2\pi i}\int_{\gamma} \left(z-P_{\eps}\right)^{-1}\,dz,
\endeq
giving the number of eigenvalues of $P_{\eps}$ in the rectangle $R$.

\bigskip
\noindent
The plan of the paper is as follows. In Section \ref{review}, we recall some basic facts and estimates, established in~\cite{HiSjVu}, to be used in the
proof of the main result. In particular, we recall the main features of the averaging procedure along the $H_p$--flow, effectively replacing $q$ in
(\ref{eq11}) by $\langle{q}\rangle_T$ in (\ref{eq15}), as well as the associated suitable IR-manifold $\subset T^*\widetilde{M}$, playing the role
of the real phase space throughout the proofs. Section \ref{vertical} is devoted to the analysis of the trace integrals along the vertical segments
$\gamma_2$, $\gamma_4$, contributing to the trace in (\ref{eq27}). Following the ideas and methods
of~\cite{Ma},~\cite{MaMa},~\cite{Sj97},~\cite{Sj00},~\cite{Sj01}, here
we make use of auxiliary trace class perturbations, constructed so that the perturbed non-selfadjoint operator has gaps in the spectrum. Let us also
notice that the analysis of Section \ref{vertical} is of a general nature and does not depend directly on the paper~\cite{HiSjVu}.
It is in Section \ref{horizontal}, concerned
with the resolvent integrals along the horizontal segments $\gamma_1$, $\gamma_3$, that the results of~\cite{HiSjVu} become important. The trace
analysis along the horizontal segments proceeds by means of a pseudodifferential partition of unity, and in particular, when understanding the
contributions coming from small neighborhoods of the Diophantine tori $\Lambda_j$, $j=1,3$, we apply the quantum Birkhoff normal form for $P_{\eps}$,
constructed in~\cite{HiSjVu}. Section \ref{horizontal} is concluded by combining the contributions from the different boundary segments
to derive a leading term for the trace integral (\ref{eq27}). The Weyl law (\ref{Weyl}) in Theorem \ref{theo_main} then follows when we let the
averaging time $T$ become sufficiently large. The brief Section \ref{periodic} is concerned with the task of deriving an analog of
Theorem \ref{theo_main} in the case when the Hamilton flow of $p$ is periodic on energy surfaces $p^{-1}(E)\cap T^*M$, $E\in {\rm neigh}(0,\real)$,
which was the main dynamical assumption in the series of works~\cite{HiSj04}--\cite{HiSj3a}.
A classical Hamiltonian with a periodic flow can be considered as a degenerate case of a completely integrable symbol, and thus, it seemed natural
to include this discussion here. In the final Section \ref{revolution}, we apply Theorems \ref{theo_main} and \ref{theo_ext} to a
complex perturbation of the semiclassical Laplacian on an analytic simple surface of revolution, and
complement the corresponding discussion in~\cite{HiSjVu}.

\section{Review of some results from~\cite{HiSjVu}}
\label{review}
\bigskip
\noindent
As alluded to in Section \ref{statement}, the analysis of the trace of the spectral projection in (\ref{eq27}) will proceed by working in the exponentially weighted
$h$-dependent Hilbert space $H(\Lambda)$, constructed and exploited in~\cite{HiSjVu}. The purpose of this section is therefore to recall briefly the
definition and the main features of the weighted space $H(\Lambda)$, following~\cite{HiSjVu}, as well as some resolvent estimates for $P_{\eps}$,
when viewed as an unbounded operator on $H(\Lambda)$.

\medskip
Following the discussion in Section 5 of~\cite{HiSjVu}, let us recall the following result.

\begin{prop}
\label{prop2.1}
Let us keep all the general assumptions of Section {\rm \ref{statement}}, and in particular {\rm (\ref{eq21})}, {\rm (\ref{eq22})}, and {\rm (\ref{eq23})}. Let
\begeq
\label{eq27.10}
\kappa_j : {\rm neigh}(\Lambda_j, T^*M)\rightarrow {\rm neigh}(\xi=0,T^*{\bf T}^2),\quad j=1,3,
\endeq
be the real and analytic canonical transformation given by the action-angle coordinates near $\Lambda_j$, such that $\kappa_j(\Lambda_j)=\{\xi=0\}$. Then
the leading symbol $p_{\eps}$ of $P_{\eps}$ expressed in terms of the coordinates $x$ and $\xi$ on the torus side, takes the form
$$
p_j(\xi) + i\eps q_j(x,\xi)+{\cal O}(\eps^2),\quad p_j(\xi) = a_j\cdot \xi +{\cal O}(\xi^2),\quad j=1,3.
$$
Define
$$
\langle{q_j}\rangle(\xi) = \frac{1}{(2\pi)^2} \int q_j (x,\xi)\,dx,\quad \xi\in {\rm neigh}(0,\real^2),
$$
so that $\langle{q_j}\rangle(0)=F_j$, and $dp_j(0)=a_j$ and $d\langle{q_j}\rangle(0)$ are linearly independent, for $j=1,3$.
Let $0<\widetilde{\eps}\ll 1$ be such that $\widetilde{\eps}\gg {\rm max}(\eps,h)$. Then there exists a globally defined smooth IR-manifold
$\Lambda \subset T^*\widetilde{M}$ and smooth Lagrangian tori $\widehat{\Lambda}_1$ and $\widehat{\Lambda}_3\subset \Lambda$,
such that when $\rho\in \Lambda$ is away from an $\widetilde{\eps}$-\neigh{} of $\widehat{\Lambda}_1\cup \widehat{\Lambda}_3$ in $\Lambda$ then
\begeq
\label{eq27.11}
\abs{\Re P_{\eps}(\rho)}\geq \frac{\widetilde{\eps}}{{\cal O}(1)}
\endeq
or
\begeq
\label{eq27.12}
\abs{\Im P_{\eps}(\rho)-\eps F_1}\geq \frac{\eps\widetilde{\eps}}{{\cal O}(1)}\quad \wrtext{and}\quad
\abs{\Im P_{\eps}(\rho)-\eps F_3}\geq \frac{\eps\widetilde{\eps}}{{\cal O}(1)}.
\endeq
The manifold $\Lambda$ is ${\cal O}(\eps)$-close to $T^*M$ and agrees with it away from a \neigh{} of $p^{-1}(0)\cap T^*M$. We have
$$
P_{\eps}  = {\cal O}(1): H(\Lambda,m)\rightarrow H(\Lambda).
$$
When $j=1,3$, there exists an elliptic $h$--Fourier integral operator
$$
U_j = {\cal O}(1): H(\Lambda)\rightarrow L^2_{\theta}({\bf T}^2),
$$
such that microlocally near $\widehat{\Lambda}_j$, we have
$$
U_j P_{\eps}  = \left(P_j^{(N)}(hD_x,\eps;h)+R_{N+1,j}(x,hD_x,\eps;h)\right)U_j.
$$
Here $P_j^{(N)}(hD_x,\eps;h)+R_{N+1,j}(x,hD_x,\eps;h)$ is defined microlocally near $\xi=0$ in $T^*{\bf T}^2$, the full symbol of
$P_j^{(N)}(hD_x,\eps;h)$ is independent of $x$, and
$$
R_{N+1,j}(x,\xi,\eps;h)  = {\cal O}((h,\xi,\eps)^{N+1}).
$$
Here the integer $N$ is arbitrarily large but fixed. The leading symbol of $P_j^{(N)}(hD_x,\eps;h)$ is of the form
$$
p_j(\xi) + i\eps \langle{q_j}\rangle(\xi) + {\cal O}(\eps^2).
$$
\end{prop}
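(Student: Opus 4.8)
The plan is to assemble the statement from the constructions of Section~5 of~\cite{HiSjVu}; I indicate the main steps. First I would set up the local geometry near the Diophantine tori by invoking the Arnold--Mineur--Liouville theorem, which supplies the action--angle \cantransf{} $\kappa_j$ of~(\ref{eq27.10}). Since $p$ is constant on the leaves of the foliation, the leading symbol of $P_{\eps}$ transported to $T^*{\bf T}^2$ is automatically of the form $p_j(\xi)+i\eps q_j(x,\xi)+{\cal O}(\eps^2)$ with $p_j(\xi)=a_j\cdot\xi+{\cal O}(\xi^2)$, $a_j$ being the rotation vector of $\Lambda_j$; then $\langle{q_j}\rangle(\xi)$ is nothing but $\langle{q}\rangle$ of the nearby torus, so that $\langle{q_j}\rangle(0)=F_j$ by~(\ref{eq21}). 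For the linear independence I would observe that $dp_j(0)=a_j\neq0$ since $\Lambda_j$ has a well-defined rotation number, while~(\ref{eq22}) expresses exactly that the average varies to first order along the one-parameter family of tori inside $p^{-1}(0)\cap T^*M$, i.e. that $d\langle{q_j}\rangle(0)$ does not vanish on $\ker dp_j(0)$; hence $a_j$ and $d\langle{q_j}\rangle(0)$ span $\real^2$.

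Next I would construct the global IR-manifold $\Lambda$. By~(\ref{eq23}), the continuity property~(\ref{13}) and compactness of $J$, there is $\delta_0>0$ so that $Q_{\infty}(\Lambda)$ stays at distance $\geq\delta_0$ from $\{F_1,F_3\}$ for every leaf outside a fixed \neigh{} of $\Lambda_1\cup\Lambda_3$. Averaging $q$ along the $H_p$--flow over a large but fixed time $T$ and conjugating $P_{\eps}$ by an elliptic $h$--\fourior{} quantizing a \cantransf{} that is ${\cal O}(\eps)$--close to the identity, one replaces $\Im$ of the leading symbol by $\eps\langle{q}\rangle_T$ modulo an error $o(\eps)$ as $T\to\infty$; since $\langle{q}\rangle_T$ approximates $Q_{\infty}(\Lambda)$ uniformly, this new imaginary part is bounded away from $\eps F_j$ by $\eps\widetilde{\eps}/{\cal O}(1)$ on the relevant region, once $\widetilde{\eps}\ll\delta_0$. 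Where one is already away from $p^{-1}(0)\cap T^*M$ the real part is bounded below by a fixed positive constant and no deformation is needed, which yields the agreement of $\Lambda$ with $T^*M$ there. Encoding this information in a real-analytic escape function $G={\cal O}(1)$ and letting $\Lambda$ be the corresponding ${\cal O}(\eps)$-small IR-deformation of $T^*M$ inside $T^*\widetilde{M}$, in the standard manner of~\cite{MeSj2} and~\cite{Sj00}, one obtains an IR-manifold that is ${\cal O}(\eps)$--close to $T^*M$, agrees with it outside a \neigh{} of $p^{-1}(0)\cap T^*M$, and along which~(\ref{eq27.11})--(\ref{eq27.12}) hold away from the $\widetilde{\eps}$--\neigh{} of the images $\widehat{\Lambda}_1\cup\widehat{\Lambda}_3$ of $\Lambda_1\cup\Lambda_3$. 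The mapping property $P_{\eps}={\cal O}(1)\colon H(\Lambda,m)\to H(\Lambda)$ then follows from~(\ref{eq2}) and the pseudodifferential calculus on the weighted spaces $H(\Lambda)$.

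Finally, near $\widehat{\Lambda}_j$ I would build the quantum Birkhoff normal form: pass to $T^*{\bf T}^2$ near $\xi=0$ by an elliptic $h$--\fourior{} $U_j$ quantizing $\kappa_j$, with the appropriate Floquet condition $\theta$, and then remove the $x$--dependence of the full symbol order by order in $(h,\xi,\eps)$. At each step this reduces to solving a cohomological equation $H_{p_j}\phi=r-\langle{r}\rangle_x$ on ${\bf T}^2$; expanding in Fourier series, this is solvable with a polynomial loss of regularity because the small divisors $a_j\cdot\ell$, $0\neq\ell\in\z^2$, are bounded below by $1/(C_0|\ell|^{N_0})$ by the Diophantine condition~(\ref{eq19}), and the loss is harmless since all symbols are analytic. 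Conjugating by the elliptic operators attached to the successive $\phi$'s and iterating $N$ times produces $U_jP_{\eps}=(P_j^{(N)}+R_{N+1,j})U_j$ with $P_j^{(N)}$ of $x$--independent full symbol, $R_{N+1,j}={\cal O}((h,\xi,\eps)^{N+1})$, and leading symbol $p_j(\xi)+i\eps\langle{q_j}\rangle(\xi)+{\cal O}(\eps^2)$, the $i\eps\langle{q_j}\rangle$ term being precisely the $x$--average of $i\eps q_j$.

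The step I expect to be the main obstacle is the simultaneous control of the scales $h\ll\eps\ll\widetilde{\eps}\ll1$ together with the averaging time: $T$ must tend to infinity so that $\langle{q}\rangle_T$ resolves $Q_{\infty}(\Lambda)$ uniformly, while the averaging remainder has to remain $o(\eps\widetilde{\eps})$, and the escape-function deformation has to be made compatible, near $\widehat{\Lambda}_j$, with the normal-form conjugations, so that all the pieces patch into a single smooth IR-manifold $\Lambda$ and a single bounded operator on $H(\Lambda)$. This is exactly the content of Section~5 of~\cite{HiSjVu}, to which I would refer for the details.
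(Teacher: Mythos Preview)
Your proposal is correct and follows essentially the same route as the paper: the proposition is stated there as a recall of the constructions of Section~5 of~\cite{HiSjVu}, and the subsequent discussion in the paper sketches precisely the ingredients you list --- the averaging weight $G_T$ solving $H_pG_T=q-\langle q\rangle_T$, its modification near $\Lambda_j$ using the full torus average, the IR-deformation $\Lambda=\exp(i\eps H_G)(T^*M)$, and the Birkhoff normal form near the Diophantine tori. One small point worth making explicit, which the paper addresses in a remark, is that \cite{HiSjVu} treats a single Diophantine level $F_j$ at a time, so one must observe that the construction of $\Lambda$ can be carried out simultaneously for $F_1$ and $F_3$ because the tori $\Lambda_1$ and $\Lambda_3$ are disjoint; your sketch accommodates this without change.
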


\medskip
\noindent
{\it Remark}. In the work~\cite{HiSjVu}, the case of a single Diophantine level $F_j$ has been considered, with
the definition of the Hilbert space $H(\Lambda)$  depending on $F_j$. It is clear however, from the arguments of~\cite{HiSjVu}, that the construction
of the IR-manifold $\Lambda$ can be carried out so that it works for both $F_1$ and $F_3$, since the corresponding Diophantine tori
$\Lambda_j$, $j=1$, $3$, are disjoint. Let us also observe that the tori $\widehat{\Lambda}_j\subset \Lambda$ are ${\cal O}(\eps)$--close
to $\Lambda_j \subset p^{-1}(0)\cap T^*M$ in the $C^{\infty}$--sense, $j=1,3$.

\medskip
\noindent
{\it Remark}. Let us recall from~\cite{HiSjVu} that the space $L^2_{\theta}({\bf T}^2)$ of Floquet periodic functions, occurring in the statement of
Proposition 2.1,  consists of all $u\in L^2_{{\rm loc}}(\real^2)$ such that
$$
u(x-\nu) = e^{i\theta\cdot \nu} u(x),\quad \theta = \frac{S}{2\pi h} + \frac{k_0}{4},\quad \nu \in 2 \pi \z^2.
$$
Here $S=(S_1,S_2)$ is the pair of the classical actions computed along two suitable fundamental cycles in $\Lambda_j$, and the
2-tuple $k_0\in \z^2$ stands for the Maslov indices of the corresponding cycles.

\bigskip
\noindent
Let $G_T$ be an analytic function defined in a \neigh{} of $p^{-1}(0)\cap T^*M$, such that
\begeq
\label{eq27.12.1}
H_p G_T  = q -\langle{q}\rangle_T,
\endeq
where $T>0$ is large enough but fixed. As in~\cite{HiSjVu}, we solve (\ref{eq27.12.1}) by setting
$$
G_T = \int T J_T(-t) q\circ \exp(tH_p)\,dt,\quad J_T(t) = \frac{1}{T} J\left(\frac{t}{T}\right),
$$
where the function $J$ is compactly supported, piecewise linear, with
$$
J'(t) = \delta(t) - 1_{[-1,0]}(t).
$$
It follows from the results of~\cite{HiSjVu} that there exists a $C^{\infty}$ canonical transformation
\begeq
\label{eq27.12.2}
\kappa: {\rm neigh}(p^{-1}(0),T^*M)\rightarrow {\rm neigh}(p^{-1}(0),\Lambda),
\endeq
such that, in the case when $M=\real^2$,
\begeq
\label{eq27.12.3}
\kappa(\rho) = \rho + i\eps H_G(\rho) + {\cal O}(\eps^2),
\endeq
in the $C^{\infty}$--sense. Here the function $G\in C^{\infty}_0(T^*M)$ is such that in a \neigh{} of $p^{-1}(0)\cap T^*M$, away from a small but
fixed \neigh{} of $\Lambda_1 \cup \Lambda_3$, we have $G=G_T$. Near $\Lambda_j$, $j=1,3$, the function $G$ is of the form
$\widetilde{G}\circ \kappa_j^{-1}$, where
$\kappa_j$ is the canonical transformation near $\Lambda_j$ given by the action-angle variables, defined in (\ref{eq27.10}),
and $\widetilde{G}$ is an analytic function defined in a fixed \neigh{} of $\xi=0$, such that
$$
H_p \widetilde{G} = q-r,\quad r(x,\xi) = \langle{q}\rangle(\xi)+{\cal O}(\xi^N),
$$
where $N\in \nat$ is arbitrarily large but fixed. We refer to Section 2 of~\cite{HiSjVu} for the details of the construction of the weight function
$G$, also in the compact case.

\medskip
\noindent
{\it Remark}. From~\cite{HiSjVu}, we know that in a complex \neigh{} of $p^{-1}(0)\cap T^*M$, away from a small
\neigh{} of $\Lambda_1\cup \Lambda_3$, we have
$$
\Lambda  = \exp(i\eps H_{G_T})\left(T^*M\right).
$$

\bigskip
We now come to recall the definition of the Hilbert space $H(\Lambda)$. When doing so, let us first concentrate on the case when $M=\real^2$.
We shall then take the standard FBI-Bargmann transform
\begeq
\label{eq27.13}
Tu(x) =  C h^{-3/2} \int e^{-\frac{1}{2h}(x-y)^2} u(y)\,dy,\quad C>0,
\endeq
and remark that according to~\cite{Sj95}, for a suitable choice of $C>0$ in (\ref{eq27.13}), $T$ maps $L^2(\real^2)$ unitarily onto
$$
H_{\Phi_0}(\comp^2):={\rm Hol}(\comp^2)\cap L^2(\comp^2; e^{-\frac{2\Phi_0}{h}}L(dx)).
$$
Here $\Phi_0(x) = \frac{1}{2} \left(\Im x\right)^2$ and $L(dx)$ is the Lebesgue measure in $\comp^2$.

\medskip
\noindent
When viewing $T$ in (\ref{eq27.13}) as a Fourier integral operator with a complex quadratic phase, we introduce the associated complex linear
canonical transformation
\begeq
\label{eq27.14}
\kappa_T: (y,\eta)\mapsto (x,\xi)= (y-i\eta,\eta),
\endeq
mapping the real phase space $\real^4$ onto the linear IR-manifold
$$
\Lambda_{\Phi_0} = \left\{\left(x,\frac{2}{i}\frac{\partial \Phi_0}{\partial x}\right);\, x\in \comp^2\right\}.
$$
It was shown in~\cite{HiSjVu} that the representation
\begeq
\label{eq27.15}
\kappa_T(\Lambda) = \left\{\left(x,\frac{2}{i}\frac{\partial \Phi_{\eps}}{\partial x}\right);\, x\in \comp^2\right\} =: \Lambda_{\Phi_{\eps}}
\endeq
holds. Here the function $\Phi_{\eps}\in C^{\infty}(\comp^2;\real)$ is uniformly strictly plurisubharmonic, with $\Phi_{\eps}-\Phi_0$ compactly
supported, and such that uniformly on $\comp^2$,
$$
\Phi_{\eps}(x)-\Phi_0(x)={\cal O}(\eps),\quad \nabla\left(\Phi_{\eps}(x)-\Phi_0(x)\right)={\cal O}(\eps).
$$

\medskip
\noindent
The Hilbert space $H(\Lambda)$ is then defined so that it agrees with $L^2(\real^2)$ as a linear space, and it is equipped with the norm
\begeq
\label{eq27.16}
\norm{u}=\norm{u}_{H(\Lambda)}:=\norm{Tu}_{L^2_{\Phi_{\eps}}},\quad L^2_{\Phi_{\eps}} = L^2\left(\comp^2; e^{-\frac{2\Phi_{\eps}}{h}}L(dx)\right).
\endeq
Furthermore, the natural Sobolev space $H(\Lambda,m)\subset H(\Lambda)$, associated to $\Lambda$ and
the order function $m$ is introduced so $H(\Lambda,m)$ agrees with the space $H(m)$ in (\ref{eq9.0}) as a space, and it is equipped with the norm
\begeq
\label{eq27.17}
\norm{u}^2_{H(\Lambda,m)} = \int \abs{Tu(x)}^2 \widetilde{m}^2(x) e^{-\frac{2\Phi_{\eps}(x)}{h}}\, L(dx),
\endeq
where $\widetilde{m}=m\circ \kappa_T^{-1}$ is viewed as a function on $\comp^2_x$ in the natural way.

\medskip
\noindent
{\it Remark}. We notice that in view of (\ref{eq9.2}), the inclusion map $H(\Lambda,m)\rightarrow H(\Lambda)$ is of trace class.

\bigskip
\noindent
When defining the action of the operator $P_{\eps}$ on $H(\Lambda)$, we follow~\cite{MeSj1},~\cite{Sj82}, and perform a contour deformation in the
integral representation of $P_{\eps}$ on the FBI--Bargmann transform side. The operator $P_{\eps}$ then receives a leading symbol given by
$p_{\eps}|_{\Lambda}$. More generally, when $a\in S(\Lambda_{\Phi_{\eps}},\widehat{m})$, with an order function $\widehat{m}$ on
$\Lambda_{\Phi_{\eps}}\simeq \comp^2_x$, we would like to define the Weyl quantization of $\widetilde{a}=a\circ \kappa_T\in C^{\infty}(\Lambda)$,
acting on $H(\Lambda)$. To this end, it will be convenient to introduce a globally unitary $h$-Fourier integral operator with a complex phase,
\begeq
\label{eqU}
U=U_{\eps}: L^2(\real^2)\rightarrow H(\Lambda),\quad U_{\eps=0}=1,
\endeq
depending smoothly on $\eps$, associated to the canonical transformation in (\ref{eq27.12.2}), in order to reduce the quantization procedure on
$\Lambda$ to that of Weyl on $T^*\real^2$. When defining the unitary operator $U$ globally, we follow the procedure described in detail
in~\cite{Sj95},~\cite{MeSj2}. We can then introduce
\begeq
{\rm Op}_h(\widetilde{a}) = {\cal O}(1): H(\Lambda,\widehat{m})\rightarrow H(\Lambda),
\endeq
defined as
\begeq
\label{eqKvant}
{\rm Op}_h(\widetilde{a}) = U\circ \left(\widetilde{a}\circ \kappa\right)^{w}(x,hD_x)\circ U^*,
\endeq
using the $h$--Weyl quantization on $\real^{2}$. In what follows, when quantizing symbols defined on the IR-manifold $\Lambda$,
we shall always use the unitary operator $U$ to reduce to the standard phase space $T^*\real^2$.

\medskip
\noindent
{\it Remark}. In the case when $M$ is compact, the definition of the spaces $H(\Lambda)$ and $H(\Lambda,m) = H(\Lambda, \langle{\alpha_{\xi}}\rangle^m)$
has been given in the appendix of~\cite{HiSj04}, following~\cite{Sj96}. We refer to the latter paper for a discussion of the action of $h$-differential
operators with analytic coefficients on $H(\Lambda)$. When quantizing a symbol $a\in C^{\infty}_0(\Lambda)$, we follow~\cite{Sj96} and use the Toeplitz quantization, as explained in that paper.

\bigskip
\noindent
The following result is a consequence of Proposition \ref{prop2.1} and the arguments of Section 5 of~\cite{HiSjVu}. See also Proposition
6.3 of~\cite{HiSj08}.
\begin{prop}
\label{prop2.2}
Assume that {\rm (\ref{eq23})} holds and that $h^K\leq \eps \leq h^{\delta}$, $0<\delta <1 < K$. Let
$0<\widetilde{\delta}\ll 1$ be sufficiently small and let $N_0\geq 1$ be fixed. Assume that
$$
z\in \left[ -\frac{\eps^{\widetilde{\delta}}}{C}, \frac{\eps^{\widetilde{\delta}}}{C}\right] + i\eps \left[F_j - \frac{\eps^{\widetilde{\delta}}}{C},
F_j + \frac{\eps^{\widetilde{\delta}}}{C}\right], \quad j=1,3,
$$
is such that ${\rm dist}(z,{\rm Spec}(P_{\eps}))\geq \eps h^{N_0}$. We then have, in the sense of linear continuous operators: $H(\Lambda)\rightarrow H(\Lambda,m)$,
\begeq
\label{eq27.1}
\norm{\left(z-P_{\eps}\right)^{-1}} \leq \frac{{\cal O}(1)}{\eps h^{N_0}}.
\endeq
\end{prop}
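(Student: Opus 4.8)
\medskip
\noindent
The plan is to reduce the estimate~(\ref{eq27.1}) to the a priori bound
\begeq
\label{apriori}
\norm{u}_{H(\Lambda,m)}\leq\frac{{\cal O}(1)}{\eps h^{N_0}}\,\norm{(z-P_{\eps})u}_{H(\Lambda)},\qquad u\in H(\Lambda,m).
\endeq
Since ${\rm dist}(z,{\rm Spec}(P_{\eps}))\geq\eps h^{N_0}>0$, the resolvent $(z-P_{\eps})^{-1}$ exists as a bounded operator $H(\Lambda)\to H(\Lambda,m)$ (the inclusion $H(\Lambda,m)\hookrightarrow H(\Lambda)$ being even of trace class by~(\ref{eq9.2})), so~(\ref{eq27.1}) is equivalent to~(\ref{apriori}). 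To prove~(\ref{apriori}) we argue as in Section~5 of~\cite{HiSjVu}. Fix an auxiliary parameter $\widetilde{\eps}$ with $\eps^{\widetilde{\delta}}\ll\widetilde{\eps}\ll 1$, which is possible because $\widetilde{\delta}$ is small (so $\eps^{\widetilde{\delta}}\gg h$), and note that then $\widetilde{\eps}\gg\max(\eps,h)$, hence in particular $\widetilde{\eps}\gg h^{N_0}$ as $N_0\geq1$. Insert a microlocal partition of unity $1=\chi_0+\chi_1+\chi_3$ on $\Lambda$, adapted to the $H_p$--invariant foliation as in~\cite{HiSjVu}, with $\chi_j$, $j=1,3$, supported in a $\widetilde{\eps}$-\neigh{} of $\widehat{\Lambda}_j$ and $\chi_0$ supported off a slightly smaller such \neigh{} of $\widehat{\Lambda}_1\cup\widehat{\Lambda}_3$; the associated operators on $H(\Lambda)$ are defined through the unitary operator $U$ in the Euclidean case and via Toeplitz quantization in the compact case. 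It is then enough to bound $\norm{\chi_k u}_{H(\Lambda,m)}$ for $k=0,1,3$.

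On the support of $\chi_0$ we exploit the alternative in Proposition~\ref{prop2.1}: at every such point either~(\ref{eq27.11}) or~(\ref{eq27.12}) holds, and since $z$ differs from $i\eps F_j$ by ${\cal O}(\eps^{\widetilde{\delta}})\ll\widetilde{\eps}$, a finite flow--invariant refinement splits the support of $\chi_0$ into pieces on each of which, with a definite sign, either $\pm\Re(z-p_{\eps})\geq\widetilde{\eps}/{\cal O}(1)$ or $\pm\Im(z-p_{\eps})\geq\eps\widetilde{\eps}/{\cal O}(1)$. On such a piece one compares $P_{\eps}$ with an auxiliary operator $\widetilde{P}_{\eps}$, agreeing with it there and obtained by a global bounded selfadjoint modification so that, respectively, $\pm\Re(z-\widetilde{P}_{\eps})$, or $\frac{1}{\eps}(\pm\Im(z-\widetilde{P}_{\eps}))$, is a selfadjoint operator whose symbol --- genuinely of size ${\cal O}(1)$, since $\Lambda$ is ${\cal O}(\eps)$--close to $T^*M$ and $P_{\eps=0}$ is selfadjoint --- is $\geq\widetilde{\eps}/{\cal O}(1)$ everywhere. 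The key point in the second (and more delicate) case is that the scalar factor $\eps$ has been extracted \emph{before} invoking the sharp G\aa rding inequality, so that the resulting lower bound $\widetilde{\eps}/{\cal O}(1)-{\cal O}(h)$ is positive because $\widetilde{\eps}\gg h$; one concludes that $z-\widetilde{P}_{\eps}$ is invertible with $\norm{(z-\widetilde{P}_{\eps})^{-1}}_{H(\Lambda)\to H(\Lambda,m)}={\cal O}(1)/(\eps\widetilde{\eps})$, and transporting this back to $P_{\eps}$ by means of the flow--adapted partition gives
\begeq
\norm{\chi_0 u}_{H(\Lambda,m)}\leq\frac{{\cal O}(1)}{\eps\widetilde{\eps}}\,\norm{(z-P_{\eps})u}_{H(\Lambda)}+{\cal O}(h^{\infty})\norm{u}_{H(\Lambda)},
\endeq
which suffices, as $1/(\eps\widetilde{\eps})\ll1/(\eps h^{N_0})$.

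On the support of $\chi_j$, $j=1,3$, we conjugate $P_{\eps}$ by the elliptic $h$--Fourier integral operator $U_j$ of Proposition~\ref{prop2.1}, which reduces it microlocally near $\widehat{\Lambda}_j$ to $P_j^{(N)}(hD_x,\eps;h)+R_{N+1,j}$ acting near $\xi=0$ on $L^2_{\theta}({\bf T}^2)$. As the full symbol of $P_j^{(N)}(hD_x,\eps;h)$ is independent of $x$, this is a normal Fourier multiplier whose spectrum is the explicit distorted lattice $\{P_j^{(N)}(h(\nu-\theta),\eps;h):\nu\in\z^2\}$, with horizontal spacing $\sim h$ and vertical spacing $\sim\eps h$. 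The conjugation by $U_j$ being elliptic, it leaves the microlocal spectrum near $\widehat{\Lambda}_j$ unchanged; moreover $R_{N+1,j}={\cal O}((h,\xi,\eps)^{N+1})={\cal O}(\widetilde{\eps}^{N+1})$ on the support of $\chi_j$, which lies in $\{|\xi|\leq{\cal O}(\widetilde{\eps})\}$, and ${\cal O}(\widetilde{\eps}^{N+1})\ll\eps h^{N_0}$ as soon as $N$ is fixed large enough in terms of $N_0$, $K$, $\delta$ and $\widetilde{\delta}$. Combining this with the estimate on the support of $\chi_0$ (which rules out quasimodes concentrated away from $\widehat{\Lambda}_1\cup\widehat{\Lambda}_3$), the hypothesis ${\rm dist}(z,{\rm Spec}(P_{\eps}))\geq\eps h^{N_0}$ forces ${\rm dist}(z,{\rm Spec}(P_j^{(N)}(hD_x,\eps;h)))\geq\half\eps h^{N_0}$, so that $\norm{(z-P_j^{(N)}(hD_x,\eps;h))^{-1}}\leq2/(\eps h^{N_0})$; the remainder $R_{N+1,j}$ is then absorbed by a convergent Neumann series, and transporting the bound back through the bounded operators $U_j$ and $U_j^{*}$ yields
\begeq
\norm{\chi_j u}_{H(\Lambda,m)}\leq\frac{{\cal O}(1)}{\eps h^{N_0}}\,\norm{(z-P_{\eps})u}_{H(\Lambda)}+{\cal O}(h^{\infty})\norm{u}_{H(\Lambda)},\qquad j=1,3.
\endeq

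Summing the three estimates and using $\norm{u}_{H(\Lambda,m)}\leq\sum_k\norm{\chi_k u}_{H(\Lambda,m)}+{\cal O}(h^{\infty})\norm{u}_{H(\Lambda,m)}$, one absorbs the ${\cal O}(h^{\infty})$ terms into the left--hand side and obtains~(\ref{apriori}); the compact case is treated identically, using the Toeplitz calculus on $H(\Lambda)$ of~\cite{Sj96},~\cite{HiSj04}. I expect the main obstacle to lie in the analysis on the support of $\chi_j$, which carries the genuinely sharp factor $1/(\eps h^{N_0})$ --- in contrast with the harmless $1/(\eps\widetilde{\eps})$ away from the tori --- and requires combining the quantum Birkhoff normal form of Proposition~\ref{prop2.1} with an accurate transfer of the spectral gap through the Fourier integral operator conjugations and a uniform control of the remainders $R_{N+1,j}$, all in the regime $h^K\leq\eps\leq h^{\delta}$ with $K>1$, where $\eps$ may be much smaller than $h$. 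A closely related difficulty is to arrange the flow--adapted partition and the auxiliary operators $\widetilde{P}_{\eps}$ so that every error term --- the commutators used in the patching, the sharp--G\aa rding losses, the Neumann remainders --- stays negligible uniformly in this regime; this is precisely where the special structure of the weight $G_T$ solving $H_pG_T=q-\langle{q}\rangle_T$, and the attendant trace--class bookkeeping of~\cite{HiSjVu}, come into play.
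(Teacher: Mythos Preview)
Your proposal is correct and follows essentially the same approach the paper indicates: the paper gives no self-contained proof but simply cites Proposition~\ref{prop2.1} together with the arguments of Section~5 of~\cite{HiSjVu} (and Proposition~6.3 of~\cite{HiSj08}), and your sketch is precisely a faithful unpacking of those arguments --- the flow-adapted partition $1=\chi_0+\chi_1+\chi_3$, the sharp G\aa rding/auxiliary operator bound on ${\rm supp}\,\chi_0$ giving the harmless ${\cal O}(1)/(\eps\widetilde\eps)$, and the Birkhoff normal form reduction near $\widehat\Lambda_j$ yielding the critical ${\cal O}(1)/(\eps h^{N_0})$. The one point worth making explicit is that in~\cite{HiSjVu} the transfer of the spectral gap from $P_\eps$ to the Fourier multiplier $P_j^{(N)}(hD_x,\eps;h)$ is organized via a well-posed Grushin problem rather than by the direct ``spectral comparison + Neumann series'' you describe; the two are equivalent here, but the Grushin formulation avoids any appearance of circularity in passing from ${\rm dist}(z,{\rm Spec}(P_\eps))\geq\eps h^{N_0}$ to the corresponding gap for the normal form.
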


\section{Trace class per\-tur\-bations and the vertical bo\-un\-da\-ry segments}
\label{vertical}
\setcounter{equation}{0}

In this section we shall be concerned with the trace of the integrals
$$
\frac{1}{2\pi i}\int_{\gamma_j} \left(z-P_{\eps}\right)^{-1}\,dz,\quad j=2,4,
$$
and, to fix the ideas, we shall take $j=2$. It will be clear that the treatment of the case $j=4$ is similar. Furthermore, for simplicity, we shall
concentrate on the case when $M=\real^2$.

\medskip
\noindent
In the Hilbert space $H(\Lambda)$, let us write
$$
P_{\eps}  = P + {\cal O}(\eps): H(\Lambda,m)\rightarrow H(\Lambda),
$$
where $P = U P_{\eps=0} U^*$ is selfadjoint in $H(\Lambda)$. Here $P_{\eps=0}$ is selfadjoint on $L^2(\real^2)$.
We shall write $p\in C^{\infty}(\Lambda)$ for the leading symbol of $P$.

\medskip
\noindent
Following the approach of~\cite{Ma},\cite{Sj00}, we shall introduce a trace class perturbation of $P_{\eps}$ in order to create a gap in the spectrum.
To this end, let $\chi\in C^{\infty}_0(\real;[0,1])$ be such that $\chi(t)=1$ for $\abs{t}\leq 1$, and consider the symbol
$$
k(\rho;h) = i \chi\left(\frac{p(\rho)-E_2}{\eps^{3\widetilde{\delta}}}\right),\quad \rho \in \Lambda.
$$
We shall assume that $0<\widetilde{\delta}< 1$ is so small that
\begeq
\label{eq27.2}
\eps^{3\widetilde{\delta}} \geq h^{\frac{1}{2}-\eta},
\endeq
for some $\eta>0$. Associated to $k(\rho;h)$, let us introduce
\begeq
\label{eq28}
\widetilde{P} = P + \eps^{3\widetilde{\delta}}K,
\endeq
where $K$ is the Weyl quantization of $k(\rho;h)$, obtained by using the $h$-Weyl quantization on $\real^2$, as described in (\ref{eqKvant}).

\medskip
The hypothesis (\ref{eq27.2}) together with the standard estimates for the operator norm and the trace class norm of a pseudodifferential operator on
$\real^n$, given in~\cite{DiSj}, show that
\begeq
\label{eq28.1}
\norm{\widetilde{P}-P}\leq {\cal O}\left(\eps^{3\widetilde{\delta}}\right),\quad \norm{\widetilde{P}-P}_{{\rm tr}}\leq
{\cal O}\left(\frac{\eps^{6\widetilde{\delta}}}{h^{2}}\right).
\endeq

\medskip
\noindent
Write next
$$
\widetilde{P}_{\eps} = P_{\eps} + \eps^{3\widetilde{\delta}} K.
$$
We shall make use of the following crude parametrix construction for $z-\widetilde{P}_{\eps}$, when $\Re z = E_2$ and $\Im z ={\cal O}(\eps)$.
In doing so let us consider
\begeq
\label{eq28.2}
e(\rho,z,\eps) = \frac{1}{z-\widetilde{p}_{\eps}(\rho)},\quad \widetilde{p}_{\eps}(\rho) := p_{\eps}(\rho) + i\eps^{3\widetilde{\delta}}
\chi\left(\frac{p(\rho)-E_2}{\eps^{3\widetilde{\delta}}}\right).
\endeq
Here $p_{\eps}$ is the leading symbol of $P_{\eps}$, acting on $H(\Lambda)$. We shall first restrict the attention to the region where
\begeq
\label{eq28.21}
\abs{p(\rho)-E_2} \leq {\cal O}(\eps^{3\widetilde{\delta}}).
\endeq
Here we have, with $\Re z =E_2$, $\Im z = {\cal O}(\eps)$,
\begeq
\label{eq28.22}
\abs{\widetilde{p}_{\eps}(\rho)-z}\geq \frac{\eps^{3\widetilde{\delta}}}{{\cal O}(1)}.
\endeq
Considering the usual expression for $\nabla^{\ell} e$, $\ell\geq 1$, given by the Fa\`a di Bruno's formula~\cite{Lerner}, we see that we have to estimate
the expression
\begeq
\label{eq28.23}
\frac{1}{(z-\widetilde{p}_{\eps})}\prod_{j=1}^k \frac{\nabla^{\ell_j} \widetilde{p}_{\eps}}{(z-\widetilde{p}_{\eps})},\quad \ell_j\geq 1, \quad \ell_1+\ldots\, +\ell_k=\ell.
\endeq
Distinguishing the cases $\ell_j=1$ and $\ell_j \geq 2$, and using that $\nabla \widetilde{p}_{\eps}={\cal O}(1)$ together with (\ref{eq28.22}),
we obtain that in the region where (\ref{eq28.21}) holds, we have
\begeq
\label{eq28.3}
\nabla^{\ell} e(\rho,z,\eps) = \eps^{-3\widetilde{\delta}} {\cal O}_{\ell}\left(\eps^{-3\widetilde{\delta}\ell}\right).
\endeq

\medskip
\noindent
When $\rho \in \Lambda$ is in a bounded set away from the region where (\ref{eq28.21}) is valid, the estimate (\ref{eq28.3}) improves to the following,
\begeq
\label{eq28.4}
\nabla^{\ell} e(\rho,\eps,z) = {\cal O}\left(\frac{1}{\abs{p_{\eps}(\rho)-z}^{1+\ell}}\right).
\endeq
Finally, when $\rho$ is in a \neigh\, of infinity, where $\Lambda$ agrees with $\real^4$, we get
\begeq
\label{eq28.5}
\nabla^{\ell} e(\rho,\eps,z) = {\cal O}_{\ell}\left(\frac{1}{m(\rho)}\right).
\endeq
Introducing $E(z)$ to be the Weyl quantization of $e(\rho,z,\eps)$, defined using the unitary map (\ref{eqU}), as described in Section \ref{review},
we conclude, using (\ref{eq27.2}), (\ref{eq28.3}), (\ref{eq28.4}), (\ref{eq28.5}), that
\begeq
\label{eq28.6}
E(z) = {\cal O}\left(\frac{1}{\eps^{3\widetilde{\delta}}}\right): H(\Lambda)\rightarrow H(\Lambda,m),
\endeq
while the trace class norm of the operator $E(z)$ on $H(\Lambda)$ satisfies
\begeq
\label{eq28.7}
\norm{E(z)}_{{\rm tr}}\leq \frac{{\cal O}(1)}{h^2} \int\!\!\!\int_{K} \frac{\mu(d\rho)}{{\rm max}(\abs{p_{\eps}(\rho)-z},\eps^{3\widetilde{\delta}})}
\leq \frac{{\cal O}(1)}{h^2} \log\frac{1}{\eps}.
\endeq
Here $\mu(d\rho)$ is the symplectic volume element on the IR-manifold $\Lambda$, so that
$$
\mu(d\rho) = \frac{\sigma^2}{2!}\biggl|_{\Lambda},
$$
where $\sigma$ is the complex symplectic $(2,0)$-form on $T^*\widetilde{M}$, and $K\subset \Lambda$ is a sufficiently large fixed compact set.

\medskip
\noindent
It is then clear from (\ref{eq27.2}), (\ref{eq28.3}), (\ref{eq28.4}), and (\ref{eq28.5}), that
\begeq
\label{eq28.9}
(z-\widetilde{P}_{\eps})E(z) = 1 + R,
\endeq
where
\begeq
\label{eq28.10}
R = {\cal O}\left(\frac{h}{\eps^{6\widetilde{\delta}}}\right): H(\Lambda)\rightarrow H(\Lambda).
\endeq
It follows therefore that in the region where $\Re z = E_2$, $\Im z = {\cal O}(\eps)$, the operator
$z-\widetilde{P}_{\eps}: H(\Lambda,m)\rightarrow H(\Lambda)$ is bijective, and
\begeq
\label{eq28.101}
\left(z-\widetilde{P}_{\eps}\right)^{-1} = {\cal O}\left(\frac{1}{\eps^{3\widetilde{\delta}}}\right): H(\Lambda)\rightarrow H(\Lambda,m).
\endeq
Furthermore,
$$
\left(z-\widetilde{P}_{\eps}\right)^{-1} = E(z)(1+R)^{-1},
$$
and writing $(1+R)^{-1} = 1-(1+R)^{-1}R$, we get
\begeq
\label{eq28.11}
\left(z-\widetilde{P}_{\eps}\right)^{-1} - E(z) = -E(z)(1+R)^{-1}R.
\endeq
It follows therefore from (\ref{eq28.7}) and (\ref{eq28.10}) that the trace class norm of the operator in the left hand side of (\ref{eq28.11}) is
\begeq
\label{eq28.12}
{\cal O}\left(\frac{h \eps^{-6\widetilde{\delta}}}{h^2}\log\frac{1}{\eps}\right),
\endeq
assuming that (\ref{eq27.2}) holds.

\medskip
\noindent
We conclude that
\begeq
\label{eq28.13}
{\rm tr}\,\frac{1}{2\pi i}\int_{\gamma_2} \left(z-\widetilde{P}_{\eps}\right)^{-1}\,dz  =
{\rm tr}\,\frac{1}{2\pi i}\int_{\gamma_2} E(z)\,dz + {\cal O}\left(\frac{h \eps^{-5\widetilde{\delta}}}{h^2}\log\frac{1}{\eps}\right).
\endeq
Here we have also used that the length of $\gamma_2$ is ${\cal O}(\eps)$.

\bigskip
\noindent
We shall now compare traces of the integrals of the resolvents of $P_{\eps}$ and $\widetilde{P}_{\eps}$, following some classical methods in
non-selfadjoint spectral theory. Let us also remark that such methods now also have a tradition in the theory of resonances.
See~\cite{GoKr},~\cite{Ma},~\cite{MaMa},~\cite{Sj97},~\cite{Sj01},~\cite{PeZw},~\cite{Vodev},~\cite{Zw}.
We shall therefore only recall the main features of the argument, referring to the above mentioned works for the details.

\medskip
\noindent
Assume that $\Re z= E_2$ and $\Im z = {\cal O}(\eps)$. An application of the resolvent identity shows that
$$
(z-P_{\eps})^{-1}-(z-\widetilde{P}_{\eps})^{-1} = -\left(z-P_{\eps}\right)^{-1} \eps^{3\widetilde{\delta}} K \left(z-\widetilde{P}_{\eps}\right)^{-1}
$$
is of trace class on $H(\Lambda)$, and using the cyclicity of the trace, we obtain, by a classical calculation, that
$$
{\rm tr} \left(\left(z-P_{\eps}\right)^{-1} - \left(z-\widetilde{P}_{\eps}\right)^{-1}\right) =
{\rm tr} \left( \left(1+\widetilde{K}(z)\right)^{-1} \partial_z \widetilde{K}(z)\right).
$$
Here $\widetilde{K}(z) = \eps^{3\widetilde{\delta}}K\left(z-\widetilde{P}_{\eps}\right)^{-1}$. Hence,
\begin{multline*}
{\rm tr}\, \left(\frac{1}{2\pi i}\int_{\gamma_2}(z-P_{\eps})^{-1}\,dz  -
\frac{1}{2\pi i}\int_{\gamma_2}(z-\widetilde{P}_{\eps})^{-1}\,dz\right) \\ = \frac{1}{2\pi i}\int_{\gamma_2}
\partial_z \log {\rm det}\, \left(1+\eps^{3\widetilde{\delta}}K(z-\widetilde{P}_{\eps})^{-1}\right)\,dz.
\end{multline*}
We shall be interested in the real part of the expression above, which is equal to
$$
\frac{1}{2\pi} {\rm var\, arg}_{\gamma_2}\,D(z),
$$
where we have set
\begeq
\label{eq28.135}
D(z)={\rm det}\, \left(1+\widetilde{K}(z)\right).
\endeq
An application of (\ref{eq28.1}) and (\ref{eq28.101}) shows that
\begeq
\label{eq28.14}
\norm{\widetilde{K}(z)}_{{\rm tr}} \leq {\cal O}\left(\frac{\eps^{3\widetilde{\delta}}}{h^2}\right),
\endeq
in the region where $\Re z = E_2$, $\Im z = {\cal O}(\eps)$.

\medskip
\noindent
When estimating the argument variation of $D(z)$ along $\gamma_2$, we shall proceed by following the now well established and essentially classical
complex analytic argument, described in detail in~\cite{Ma},~\cite{Sj97},~\cite{Sj01}. (See also~\cite{Viola}.) In order to recall its main features,
let us consider the holomorphic determinant $D(z)$ in (\ref{eq28.135}) in a region of the form
\begeq
\label{eq29}
{R}_d = [E_2 - d \eps^{3\widetilde{\delta}},E_2 + d\eps^{3\widetilde{\delta}}]+i [-d\eps^{3\widetilde{\delta}},d \eps^{3\widetilde{\delta}}],
\quad d>0,
\endeq
and notice that the bound (\ref{eq28.14}) remains valid for $z\in R_d$. It follows that
\begeq
\label{eq29.1}
\abs{D(z)}\leq \exp(\norm{\widetilde{K}(z)}_{{\rm tr}}) \leq \exp\left({\cal O}(1)\frac{\eps^{3\widetilde{\delta}}}{h^2}\right),\quad z\in R_d.
\endeq
Let now $z_0 \in R_d$ be such that $\Im z_0<0$ and $\abs{\Im z_0}\geq d_1 \eps^{3\widetilde{\delta}}$, $d_1 < d$. We then have
$$
\left(z_0-P_{\eps}\right)^{-1}  = {\cal O}\left(\frac{1}{\eps^{3\widetilde{\delta}}}\right): H(\Lambda) \rightarrow H(\Lambda),
$$
and therefore, since
$$
D(z_0)^{-1} = {\rm det}\, \left((1+\widetilde{K}(z_0))^{-1}\right),
$$
with
$$
(1+\widetilde{K}(z_0))^{-1} = 1-\eps^{3\widetilde{\delta}} K (z_0-P_{\eps})^{-1},
$$
it follows that
\begeq
\label{eq29.2}
\abs{D(z_0)}\geq \exp\left(-{\cal O}(1) \frac{\eps^{3\widetilde{\delta}}}{h^2}\right).
\endeq

\noindent
Let $N=N(P_{\eps},R_d,h)$ be the number of eigenvalues $z_j$, $j=1,\ldots\, N$, of $P_{\eps}$ in $R_d$, repeated according to their multiplicity. Using
that (\ref{eq28.14}) continues to be valid in a region of the form $R_d$, with a slightly larger value of $d$, and combining this with (\ref{eq29.2})
and Jensen's formula, we obtain that
\begeq
\label{eq29.3}
N(P_{\eps},R_d,h) = {\cal O}(1) \frac{\eps^{3\widetilde{\delta}}}{h^2}.
\endeq
Proceeding further as in~\cite{Ma},~\cite{Sj97},~\cite{Sj01}, one next considers a factorization
\begeq
\label{eq29.4}
D(z) = G(z) \prod_{j=1}^N (z-z_j),\,\, z\in {R}_d,
\endeq
where $G$ and $1/G$ are holomorphic in $R_d$. An application of Cartan's lemma (or, alternatively, of Lemma 4.3 in~\cite{Sj01}) together with
the maximum principle and the Harnack inequality allows us to show that after an arbitrarily small decrease of $d>0$, we have
$$
\abs{\log\abs{G(z)}}\leq {\cal O}\left(\frac{\eps^{3\widetilde{\delta}}}{h^2}\right),\quad z\in R_d.
$$
It follows then easily (see, for instance, Lemma 1.8 in~\cite{Ma}) that the argument variation of $G(z)$ along $\gamma_2$ is
$$
{\rm var\, arg}_{\gamma_2}\,G(z) = {\cal O}\left(\frac{\eps^{3\widetilde{\delta}}}{h^2}\right).
$$
Combining this with (\ref{eq29.3}) and (\ref{eq29.4}), we obtain that
$$
{\rm var\, arg}_{\gamma_2}\,D(z) = {\cal O}\left(\frac{\eps^{3\widetilde{\delta}}}{h^2}\right).
$$

\begin{prop}
\label{prop4.1}
Assume that $\widetilde{\delta}>0$ is small enough so that $h^{\frac{1}{2}-\eta}\leq \eps^{3\widetilde{\delta}}$, for some $\eta>0$. We have
\begeq
\Re \left({\rm tr}\, \left(\frac{1}{2\pi i}\int_{\gamma_2}(z-P_{\eps})^{-1}\,dz  -
\frac{1}{2\pi i}\int_{\gamma_2}(z-\widetilde{P}_{\eps})^{-1}\,dz\right)\right) = {\cal O}\left(\frac{\eps^{3\widetilde{\delta}}}{h^2}\right).
\endeq
\end{prop}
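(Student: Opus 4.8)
The statement collects into a single conclusion the estimates worked out in the discussion preceding it, and the plan is simply to organize that material. The point of departure is the resolvent identity
\[
(z-P_{\eps})^{-1}-(z-\widetilde{P}_{\eps})^{-1} = -\left(z-P_{\eps}\right)^{-1} \eps^{3\widetilde{\delta}} K \left(z-\widetilde{P}_{\eps}\right)^{-1},
\]
whose right hand side is of trace class on $H(\Lambda)$ --- as already noted --- since $K$ is the quantization of a compactly supported symbol and the two resolvents are bounded. Using the cyclicity of the trace, the trace of this operator equals $\partial_z\log D(z)$, where $D(z)={\rm det}(1+\widetilde{K}(z))$ and $\widetilde{K}(z)=\eps^{3\widetilde{\delta}}K(z-\widetilde{P}_{\eps})^{-1}$ as in (\ref{eq28.135}). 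Integrating along $\gamma_2$ and taking real parts, the quantity to be estimated becomes $\frac{1}{2\pi}{\rm var\, arg}_{\gamma_2}D(z)$, so it suffices to bound the argument variation of the holomorphic function $D$ along the vertical segment $\gamma_2$ by ${\cal O}(\eps^{3\widetilde{\delta}}/h^2)$.

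To carry this out I would work in a complex rectangle $R_d$ as in (\ref{eq29}), of side comparable to $\eps^{3\widetilde{\delta}}$ and containing $\gamma_2$, and assemble three ingredients. First, the trace--norm bound $\norm{\widetilde{K}(z)}_{\rm tr}={\cal O}(\eps^{3\widetilde{\delta}}/h^2)$ --- valid throughout $R_d$ thanks to the parametrix estimates (\ref{eq28.3})--(\ref{eq28.5}) and (\ref{eq28.101}) --- gives, via $\abs{{\rm det}(1+A)}\le e^{\norm{A}_{\rm tr}}$, the upper bound $\abs{D(z)}\le\exp({\cal O}(1)\eps^{3\widetilde{\delta}}/h^2)$ on $R_d$. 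Second, at a point $z_0\in R_d$ with $\Im z_0<0$ and $\abs{\Im z_0}\ge d_1\eps^{3\widetilde{\delta}}$, which lies well below the spectral band $\Im z={\cal O}(\eps)$ precisely because $\eps\ll\eps^{3\widetilde{\delta}}$ ($\widetilde{\delta}$ being small), the resolvent $(z_0-P_{\eps})^{-1}$ is ${\cal O}(\eps^{-3\widetilde{\delta}})$ on $H(\Lambda)$, and writing $(1+\widetilde{K}(z_0))^{-1}=1-\eps^{3\widetilde{\delta}}K(z_0-P_{\eps})^{-1}$ one obtains the matching lower bound $\abs{D(z_0)}\ge\exp(-{\cal O}(1)\eps^{3\widetilde{\delta}}/h^2)$. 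Third --- the crucial counting step --- since $z-P_{\eps}=(1+\widetilde{K}(z))(z-\widetilde{P}_{\eps})$ with $z-\widetilde{P}_{\eps}$ bijective on $R_d$, the zeros of $D$ in $R_d$ are exactly the eigenvalues of $P_{\eps}$ there, and Jensen's formula applied with the two preceding bounds yields $N:=N(P_{\eps},R_d,h)={\cal O}(\eps^{3\widetilde{\delta}}/h^2)$.

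With these in hand I would factor $D(z)=G(z)\prod_{j=1}^{N}(z-z_j)$ on $R_d$, where the $z_j$ are the eigenvalues of $P_{\eps}$ in $R_d$ and $G$, $1/G$ are holomorphic on $R_d$. An application of Cartan's lemma (or of Lemma 4.3 in~\cite{Sj01}) together with the maximum principle and the Harnack inequality, at the cost of an arbitrarily small decrease of $d$, upgrades the pointwise information on $\abs{D}$ and the bound on $N$ into $\abs{\log\abs{G(z)}}\le{\cal O}(\eps^{3\widetilde{\delta}}/h^2)$ on the shrunken rectangle, whence ${\rm var\, arg}_{\gamma_2}G(z)={\cal O}(\eps^{3\widetilde{\delta}}/h^2)$ (for instance by Lemma 1.8 in~\cite{Ma}). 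On the other hand, the argument variation along the segment $\gamma_2$ of each factor $z-z_j$ is at most $\pi$, so the polynomial part contributes ${\cal O}(N)={\cal O}(\eps^{3\widetilde{\delta}}/h^2)$; summing the two contributions gives ${\rm var\, arg}_{\gamma_2}D(z)={\cal O}(\eps^{3\widetilde{\delta}}/h^2)$, which is the claim. The \emph{main obstacle} is the last two steps taken together: one must nest the rectangles $R_d$ at the single scale $\eps^{3\widetilde{\delta}}$, verify that the parametrix and trace--norm estimates genuinely persist on the slightly enlarged rectangle needed for Jensen's formula and Cartan's lemma, and place the lower--bound point $z_0$ strictly below the band of eigenvalues while keeping it inside $R_d$ --- all of which rests on the hypothesis $h^{1/2-\eta}\le\eps^{3\widetilde{\delta}}$ and on the a priori confinement $\Im({\rm Spec}\,P_{\eps})={\cal O}(\eps)$.
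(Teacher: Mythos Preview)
Your proposal is correct and follows essentially the same approach as the paper's own argument: both reduce to bounding ${\rm var\,arg}_{\gamma_2}D(z)$, establish the upper and lower bounds on $\abs{D}$ in the rectangle $R_d$, invoke Jensen's formula for the eigenvalue count, and finish via the factorization $D=G\prod(z-z_j)$ together with Cartan's lemma and the Harnack inequality. Your write-up even makes explicit a couple of points the paper leaves implicit --- the identification of the zeros of $D$ with the eigenvalues of $P_{\eps}$ via $(z-P_{\eps})=(1+\widetilde{K}(z))(z-\widetilde{P}_{\eps})$, and the reason the lower-bound point $z_0$ sits safely below the spectral band --- so there is nothing to correct.
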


\bigskip
\noindent
Combining (\ref{eq28.13}) and Proposition \ref{prop4.1}, we obtain that
\begeq
\label{eq30.101}
\Re {\rm tr}\, \frac{1}{2\pi i}\int_{\gamma_2} \left(z-P_{\eps}\right)^{-1}\,dz = \Re {\rm tr}\, \frac{1}{2\pi i}\int_{\gamma_2} E(z)\,dz +
{\cal O}\left(\frac{\eps^{3\widetilde{\delta}}}{h^2}+ \frac{h\eps^{-5\widetilde{\delta}}}{h^2}\log\frac{1}{\eps}\right),
\endeq
and here the remainder in the right hand side is
$$
{\cal O}\left(\frac{\eps^{3\widetilde{\delta}}}{h^2}\right),
$$
provided that $\widetilde{\delta}>0$ is so small that
\begeq
\label{eq30.1011}
h \leq \frac{\eps^{8\widetilde{\delta}}}{\abs{\log \eps}}.
\endeq
Notice that the smallness condition (\ref{eq27.2}) is implied by (\ref{eq30.1011}), provided that $\eta>0$ in (\ref{eq27.2}) is sufficiently small.

\bigskip
\noindent
Since the operator $E(z)$ is introduced by means of the Weyl quantization on $\real^2$ and the unitary map (\ref{eqU}), associated to the canonical
transformation (\ref{eq27.12.2}), we know that the trace of the trace class operator
$E(z): H(\Lambda)\rightarrow H(\Lambda)$ is given by
$$
{\rm tr}\, E(z) = \frac{1}{(2\pi h)^2}\int\!\!\!\int \frac{1}{z-\widetilde{p}_{\eps}(\rho)}\,\mu(d\rho).
$$
Here we recall that $\widetilde{p}_{\eps}$ has been introduced in (\ref{eq28.2}).

\medskip
\noindent
With (\ref{eq30.101}) in mind, we shall now compare the expressions
\begeq
\label{eq30.102}
\frac{1}{2\pi i}\int_{\gamma_2} \int\!\!\!\int \frac{1}{z-p_{\eps}(\rho)}\,\mu(d\rho)\,dz
\endeq
and
\begeq
\label{eq30.103}
\frac{1}{2\pi i} \int_{\gamma_2} \int\!\!\!\int \frac{1}{z-\widetilde{p}_{\eps}(\rho)}\,\mu(d\rho)\,dz.
\endeq
The difference between the integrals (\ref{eq30.102}) and (\ref{eq30.103}) is equal to
\begeq
\label{eq31}
\frac{-1}{2\pi i}\int_{\gamma_2} \int\!\!\!\int
\frac{i\eps^{3\widetilde{\delta}}\chi\left((p-E_2)/\eps^{3\widetilde{\delta}}\right)}{(z-\widetilde{p}_{\eps})(z-p_{\eps})}\,\mu(d\rho)\,dz,
\endeq
which, in view of (\ref{eq28.22}), does not exceed
$$
{\cal O}(1) \int_{\gamma_2} \int\!\!\!\int \frac{\chi((p-E_2)/\eps^{3\widetilde{\delta}})}{\abs{z-p_{\eps}}}\,\mu(d\rho)\,\abs{dz},
$$
which can in turn be estimated by
$$
{\cal O}(1) \int\!\!\!\int \chi\left(\frac{p-E_2}{\eps^{3\widetilde{\delta}}}\right) \left(-\log\abs{p-E_2}\right)\,\mu(d\rho)=
{\cal O}(1)\int_0^{\eps^{3\widetilde{\delta}}} -\log{t}\,dt = {\cal O}(1) \eps^{3\widetilde{\delta}} \log{\frac{1}{\eps}}.
$$

\bigskip
\noindent
We summarize the discussion in this section in the following proposition.

\begin{prop}
\label{prop4.2}
Let $E_2 < 0 < E_4$ be such that $\abs{E_j}\sim \eps^{\widetilde{\delta}}$, $j=2,4$, where $0<\widetilde{\delta}<1$ is so small that
$$
h \leq \frac{\eps^{8\widetilde{\delta}}}{\abs{\log \eps}}.
$$
When $\gamma_j$ is the vertical segment given by $\Re z = E_j$,
$\eps F_3 \leq \Im z \leq \eps F_1$, we have, for $j=2,4$,
\begin{multline*}
\Re {\rm tr}\, \frac{1}{2\pi i}\int_{\gamma_j} \left(z-P_{\eps}\right)^{-1}\,dz  \\ =
\Re \frac{1}{2\pi i} \frac{1}{(2\pi h)^2}\int_{\gamma_j} \int\!\!\!\int \frac{1}{z-p_{\eps}(\rho)}\, \mu(d\rho)\,dz +
\frac{1}{h^2} {\cal O}\left(\eps^{3\widetilde{\delta}} \log{\frac{1}{\eps}}\right).
\end{multline*}
\end{prop}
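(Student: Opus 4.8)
The plan is to combine the estimates already established in this section for the case $j=2$, observe that the $j=4$ case is entirely parallel (with the cutoff $\chi((p(\rho)-E_4)/\eps^{3\widetilde\delta})$ replacing the one centered at $E_2$, and with the orientation reversed), and then assemble the pieces. Concretely, the starting point is the identity (\ref{eq30.101}), which already tells us that
$$
\Re\,\mathrm{tr}\,\frac{1}{2\pi i}\int_{\gamma_2}(z-P_{\eps})^{-1}\,dz = \Re\,\mathrm{tr}\,\frac{1}{2\pi i}\int_{\gamma_2}E(z)\,dz + \frac{1}{h^2}{\cal O}\left(\eps^{3\widetilde\delta} + h\eps^{-5\widetilde\delta}\log\frac{1}{\eps}\right),
$$
and that under the smallness hypothesis $h\leq \eps^{8\widetilde\delta}/|\log\eps|$ the remainder collapses to ${\cal O}(h^{-2}\eps^{3\widetilde\delta})$ — this is exactly the remainder claimed in the proposition, and it already absorbs the weaker condition (\ref{eq27.2}) needed to run the parametrix construction (\ref{eq28.6})--(\ref{eq28.13}) and Proposition \ref{prop4.1}.

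Next I would pass from $\mathrm{tr}\,E(z)$ to the phase-space integral. Since $E(z)$ is by construction the $h$--Weyl quantization of $e(\rho,z,\eps)=(z-\widetilde p_\eps(\rho))^{-1}$ conjugated by the unitary $U$ of (\ref{eqU}), the trace formula for pseudodifferential operators on the IR-manifold gives
$$
\mathrm{tr}\,E(z) = \frac{1}{(2\pi h)^2}\int\!\!\!\int \frac{1}{z-\widetilde p_\eps(\rho)}\,\mu(d\rho),
$$
so that $\Re\,\mathrm{tr}\,\frac{1}{2\pi i}\int_{\gamma_2}E(z)\,dz$ equals the real part of (\ref{eq30.103}) divided by $(2\pi h)^2$. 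The only remaining task is then to replace $\widetilde p_\eps$ by $p_\eps$ in that double integral, i.e. to compare (\ref{eq30.102}) and (\ref{eq30.103}); this is precisely the computation carried out above, using (\ref{eq28.22}) to bound the difference (\ref{eq31}) by ${\cal O}(1)\int_0^{\eps^{3\widetilde\delta}}(-\log t)\,dt = {\cal O}(1)\,\eps^{3\widetilde\delta}\log(1/\eps)$, which after division by $(2\pi h)^2$ again produces a contribution of the claimed size $h^{-2}{\cal O}(\eps^{3\widetilde\delta}\log(1/\eps))$.

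Putting these three steps together yields the proposition for $j=2$, and the $j=4$ case follows by the same argument with the obvious modifications (the cutoff symbol now localizes near $p=E_4$, and one notes that $E_2<0<E_4$ with $|E_j|\sim\eps^{\widetilde\delta}$ so the two vertical segments are well separated and each parametrix construction is carried out independently). I do not expect a genuine obstacle here: the real work — the trace-class bounds (\ref{eq28.1}), (\ref{eq28.7}), (\ref{eq28.12}), the determinant/Jensen argument behind Proposition \ref{prop4.1}, and the crude parametrix (\ref{eq28.9})--(\ref{eq28.11}) — has already been done, and what remains is bookkeeping. The one point requiring a little care is the choice of exponents: one must check that the condition $h\leq\eps^{8\widetilde\delta}/|\log\eps|$ in the statement is strong enough to make \emph{all} the error terms ($\eps^{3\widetilde\delta}/h^2$, $h\eps^{-5\widetilde\delta}\log(1/\eps)/h^2$, and $\eps^{3\widetilde\delta}\log(1/\eps)/h^2$) simultaneously of size $h^{-2}{\cal O}(\eps^{3\widetilde\delta}\log(1/\eps))$, and to imply (\ref{eq27.2}) with a small $\eta>0$, which is a short elementary verification.
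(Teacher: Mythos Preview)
Your proposal is correct and follows the paper's own approach essentially step for step: combine (\ref{eq30.101}) with the smallness condition (\ref{eq30.1011}) to absorb the parametrix remainder, express ${\rm tr}\,E(z)$ as the phase-space integral with symbol $(z-\widetilde p_\eps)^{-1}$, and then use the estimate on (\ref{eq31}) to pass from $\widetilde p_\eps$ to $p_\eps$ at the cost of ${\cal O}(\eps^{3\widetilde\delta}\log(1/\eps))$, with $j=4$ handled identically. The only (harmless) imprecision is that the remainder ${\cal O}(h^{-2}\eps^{3\widetilde\delta})$ you obtain from (\ref{eq30.101}) is actually \emph{smaller} than the final claimed remainder ${\cal O}(h^{-2}\eps^{3\widetilde\delta}\log(1/\eps))$, not equal to it; the logarithmic loss enters only at the last step when comparing (\ref{eq30.102}) and (\ref{eq30.103}).
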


\medskip
\noindent
{\it Remark}. As mentioned above, the idea of using trace class perturbations to create a gap in the spectrum of a
non-selfadjoint operator has a long tradition in non-selfadjoint spectral theory,~\cite{MaMa},~\cite{Sj00}. Here we have chosen to create gaps
that are wide enough, so that simple pseudodifferential perturbations can be employed to that end. The price to pay for this simplicity is that the
remainder estimates that one obtains in the trace analysis in Proposition \ref{prop4.2} are not expected to be sharp, and it is quite
likely that finer estimates are possible to derive, at the expense of a greater technical investment. We hope to be able to return to
this question in a future paper.


\section{Trace analysis near the Diophantine levels}
\label{horizontal}
\setcounter{equation}{0}
The purpose of this section is to understand the semiclassical behavior of the trace integrals
$$
{\rm tr}\,\frac{1}{2\pi i} \int_{\gamma_1} (z-P_{\eps})^{-1}\, dz,\quad j=1,3,
$$
and when doing so, we shall concentrate on the case when $j=1$. Here we recall that $\gamma_1$ is given by $\Im z=\eps F_1$,
$E_2\leq \Re z \leq E_4$, $\abs{E_j}\sim \eps^{\widetilde{\delta}}$, $j=2,4$. We shall assume throughout
that $0<\widetilde{\delta}<1$ satisfies
\begeq
\label{eq31.01}
h\leq {\cal O}(\eps^{9\widetilde{\delta}}),
\endeq
so that Proposition \ref{prop4.2} is applicable. Recall also that $F_3 < F_1$. We may, and will assume, in the remainder of the following discussion that
$z\in \gamma_1$ satisfies
\begeq
\label{eq31.1}
{\rm dist}(z,{\rm Spec}(P_{\eps}))\geq \eps h^{N_0},
\endeq
for some fixed $N_0\geq 1$, so that by Proposition \ref{prop2.2}, the resolvent $(z-P_{\eps})^{-1}$ satisfies an estimate of the form (\ref{eq27.1}).

\subsection{Trace integrals away from the Diophantine tori}

Following~\cite{HiSjVu}, thanks to Proposition \ref{prop2.1}, we shall consider a smooth partition of unity on the manifold $\Lambda$, given by
\begeq
\label{eq32}
1 = \chi_{1} + \chi_{3} + \psi_{{r},+} + \psi_{{r},-} + \psi_{{i},-} + \psi_{{i},0} + \psi_{{i},+}.
\endeq
Here $0 \leq \chi_{j}\in C^{\infty}_0(\Lambda)$ is a cut-off function to an $\eps^{{\widetilde{\delta}}}$-\neigh{} of
$\widehat{\Lambda}_j$, $j=1, 3$, such that, in the sense of trace class operators on $H(\Lambda)$, we have
\begeq
\label{eq32.001}
[P_{\eps},\chi_{j}]={\cal O}(h^{M}).
\endeq
The integer $M=M(N,\delta,\widetilde{\delta})>0$ is fixed and can be taken arbitrarily large by choosing the integer $N$ in Proposition \ref{prop2.1}
large enough. As observed and exploited in~\cite{HiSjVu}, such a choice of the cut-off $\chi_{j}$ is possible thanks to Proposition \ref{prop2.1}.

\begin{figure}
\centering
    \psfrag{1}{\Large{$\Re p_{\eps}$}}
    \psfrag{2}{\Large{$\Im \frac{p_{\eps}}{\eps}$}}
    \psfrag{3}{\Large{$E_2$}}
    \psfrag{4}{\Large{$E_4$}}
    \psfrag{5}{\Large{${\rm supp}\, \psi_{i,-}$}}
    \psfrag{6}{\Large{$\sim \eps^{\widetilde{\delta}}$}}
    \psfrag{7}{\Large{${\rm supp}\, \chi_3$}}
    \psfrag{8}{\Large{$\sim \eps^{\widetilde{\delta}}$}}
    \psfrag{9}{\Large{$F_3$}}
    \psfrag{10}{\Large{${\rm supp}\, \psi_{r,-}$}}
    \psfrag{11}{\Large{${\rm supp}\, \psi_{i,0}$}}
    \psfrag{12}{\Large{${\rm supp}\, \psi_{r,+}$}}
    \psfrag{13}{\Large{${\rm supp}\, \chi_1$}}
    \psfrag{14}{\Large{${\rm supp}\, \psi_{i,+}$}}
    \psfrag{15}{\Large{$F_1$}}
\scalebox{0.6} {\includegraphics{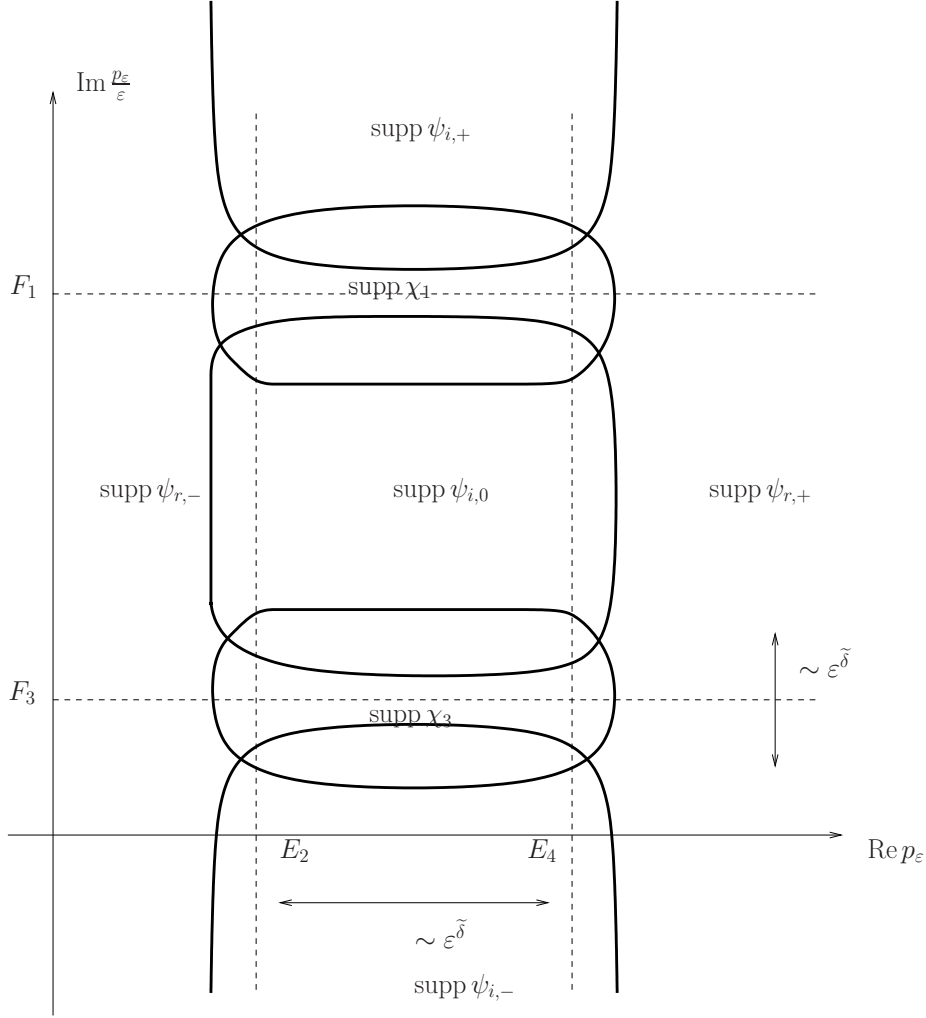}} \caption{A schematic representation of the partition of unity (\ref{eq32})
on the IR-manifold $\Lambda$, chosen according to Proposition \ref{prop2.1}. Here $p_{\eps}$ is the leading symbol of $P_{\eps}$,
acting on $H(\Lambda)$.}
\end{figure}

\bigskip
\noindent
The functions $0\leq \psi_{r,\pm}$ in (\ref{eq32}) are chosen so that
\begeq
\label{eq32.01}
\pm \Re P_{\eps}(\rho) \geq \frac{\eps^{\widetilde{\delta}}}{{\cal O}(1)},
\endeq
near the support of $\psi_{{r},\pm}$, respectively. Here we may assume that the support of $\psi_{r,+}$ contains a \neigh{} of infinity,
where $\Lambda$ agrees with $T^*M$, and there the estimate (\ref{eq32.01}) improves to the following one,
\begeq
\label{eq32.02}
\Re P_{\eps}(\rho) \geq \frac{m(\rho)}{{\cal O}(1)}.
\endeq

\medskip
\noindent
We shall now describe the properties of the cut-off functions $\psi_{i,\pm}$ and $\psi_{i,0}$, occurring in (\ref{eq32}). The function
$0\leq \psi_{i,-}\in C^{\infty}_0(\Lambda)$ is such that near the support of $\psi_{i,-}$ we have
$$
\Im P_{\eps} \leq \eps F_3 - \frac{\eps \eps^{\widetilde{\delta}}}{{\cal O}(1)},
$$
while near $\supp(\psi_{i,+})$ we have
$$
\Im P_{\eps} \geq \eps F_1 + \frac{\eps \eps^{\widetilde{\delta}}}{{\cal O}(1)}.
$$
Finally, we have
$$
\eps F_3 + \frac{\eps \eps^{\widetilde{\delta}}} {{\cal O}(1)} \leq \Im P_{\eps} \leq \eps F_1 - \frac{\eps \eps^{\widetilde{\delta}}}{{\cal O}(1)},
$$
near the support of $\psi_{i,0}\in C_0^{\infty}(\Lambda)$. Continuing to follow~\cite{HiSjVu}, we may and will arrange so that in the sense of
trace class operators on $H(\Lambda)$, we have
\begeq
\label{eq32.03}
A [P_{\eps},\psi_{i,{\bf \cdot}}]={\cal O}(h^{M}),\quad \cdot = \pm, 0,
\endeq
and also,
\begeq
\label{eq32.031}
[P_{\eps},\psi_{i,{\bf \cdot}}]A = {\cal O}(h^{M}),\quad \cdot = \pm, 0.
\endeq
Here $A$ is a microlocal cut-off to a region where $\abs{\Re P_{\eps}}\leq \eps^{\widetilde{\delta}}/{\cal O}(1)$, and
$M=M(N,\delta,\widetilde{\delta})$ is an integer having the same properties as the integer in (\ref{eq32.001}).


\bigskip
Decomposing the operator $(z-P_{\eps})^{-1}$ according to (\ref{eq32}), we shall first analyze the trace of the integral
$$
\frac{1}{2\pi i}\int_{\gamma_1} \left(z-P_{\eps}\right)^{-1}\,\psi_{r,+}\,dz.
$$
We have
\begeq
\label{eq32.032}
(z-P_{\eps})^{-1} = {\cal O}\left(\frac{1}{\eps h^{N_0}}\right): H(\Lambda)\rightarrow H(\Lambda,m),
\endeq
and from~\cite{HiSjVu}, let us recall that
$$
\left(z-P_{\eps}\right)^{-1}\,\psi_{r,+} = {\cal O}\left(\frac{1}{\eps^{\widetilde{\delta}}}\right): H(\Lambda)\rightarrow H(\Lambda,m),
$$
provided that $h\leq {\cal O}(\eps^{9\widetilde{\delta}})$.
Thanks to the essentially elliptic estimate (\ref{eq32.01}), it is possible to construct a trace class parametrix for $z-P_{\eps}$, valid near
the support of $\psi_{r,+}$. We shall now describe briefly the main steps of this well known construction. Let
$\chi = \chi_{r,+}\in C^{\infty}_b(\Lambda)$ be such that $\Re P_{\eps} \geq \eps^{\widetilde{\delta}}/{\cal O}(1)$ near the support of $\chi$ and
$\chi=1$ near ${\rm supp}\, (\psi_{r,+})$. Let
$$
e_0(\rho,z,\eps)= \frac{\chi(\rho)}{z-P_{\eps}(\rho)}.
$$
Restricting the attention to a suitable bounded region of $\Lambda$, we see that
$$
\nabla^{\ell} e_0 = {\cal O}\left(\eps^{-\widetilde{\delta}} \eps^{-\widetilde{\delta}\ell}\right),\quad \ell \in \nat,
$$
while in a \neigh{} of infinity, where $\Lambda$ agrees with $\real^4$, we have
$$
\nabla^{\ell} e_0 = {\cal O}\left(\frac{1}{m}\right).
$$
It follows that, on the level of operators, we have
$$
(z-P_{\eps})e_0  = \chi + r_1, \quad r_1 = {\cal O}\left(\frac{h}{\eps^{2\widetilde{\delta}}}\right): H(\Lambda)\rightarrow H(\Lambda).
$$
Here and in what follows, we are quantizing the symbols on $\Lambda$ using the Weyl quantization on $\real^4$, as explained in Section \ref{review}.
Notice also that the trace class norm of $e_0$ does not exceed
$$
\frac{1}{h^2}{\cal O}\left(\log\frac{1}{\eps}\right).
$$
Furthermore, with
$$
e_1  = \frac{-r_1 \chi}{z-P_{\eps}},
$$
we get on the level of operators,
$$
(z-P_{\eps})\left(e_0+e_1\right) = \chi + r_1(1-\chi) + r_2,\quad
r_2 = {\cal O}\left(\frac{h^2}{\eps^{4\widetilde{\delta}}}\right): H(\Lambda)\rightarrow H(\Lambda).
$$
and continuing in this way we obtain the symbols $e_j$, $1\leq j \leq L$, and $r_k$, $1\leq k \leq L+1$, $L\in \nat$, such that on
the level of operators,
$$
(z-P_{\eps})\left(e_0+e_1+\ldots\, +e_L\right) = \chi + \sum_{k=1}^L r_k (1-\chi) + r_{L+1}.
$$
Here
$$
r_k = {\cal O}(h^{M_k}): H(\Lambda)\rightarrow H(\Lambda),
$$
where $M_k \rightarrow \infty$ as $k\rightarrow \infty$. It follows, using also (\ref{eq32.032}),
that modulo an expression whose trace class norm on $H(\Lambda)$ can be estimated by an arbitrarily high power of $h$, provided that we take
$L$ large enough, we have
$$
(z-P_{\eps})^{-1} \psi_{r,+} \equiv \left(e_0+e_1+\ldots\, +e_L\right)\psi_{r,+}.
$$
Estimating the trace class norm of $e_j \psi_{r,+}$, $j\geq 1$ and using that the length of $\gamma_1$ is ${\cal O}(\eps^{\widetilde{\delta}})$,
we obtain the following result.

\begin{prop}
\label{prop5.1}
We have
\begin{multline}
\label{eq32.044}
{\rm tr}\, \frac{1}{2\pi i} \int_{\gamma_1} \left(z-P_{\eps}\right)^{-1}\, \psi_{r,\pm}\,dz \\  =
\frac{1}{2\pi i} \frac{1}{(2\pi h)^2}\int_{\gamma_1} \int\!\!\!\int \frac{1}{z-p_{\eps}(\rho)}\, \psi_{r,\pm}(\rho)\, \mu(d\rho)\, dz +
{\cal O}\left(\eps^{\widetilde{\delta}} \frac{h \eps^{-2\widetilde{\delta}}}{h^2}\log\frac{1}{\eps}\right).
\end{multline}
\end{prop}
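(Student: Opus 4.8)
The plan is to convert the elliptic parametrix construction carried out just before the statement into a trace computation. Everything takes place in the standard $h$--Weyl calculus on $\real^4$, since by the unitary operator $U$ of (\ref{eqU}) all quantizations on the IR--manifold $\Lambda$ are reduced to it, and since by (\ref{eq31.1}) the resolvent $(z-P_{\eps})^{-1}$ exists all along $\gamma_1$. First I would fix $L\in\nat$ so large that the remainder $r_{L+1}$ in
\begeq
(z-P_{\eps})(e_0+e_1+\ldots+e_L)=\chi+\sum_{k=1}^L r_k(1-\chi)+r_{L+1}
\endeq
has trace class norm on $H(\Lambda)$ bounded by an arbitrarily high power of $h$. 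Since $\chi\equiv 1$ near $\supp(\psi_{r,+})$, we have $\chi\psi_{r,+}=\psi_{r,+}$ identically, while $1-\chi$ vanishes on a \neigh{} of $\supp(\psi_{r,+})$, so each $r_k(1-\chi)\psi_{r,+}$ is ${\cal O}(h^{\infty})$ by the microlocal property of pseudodifferential operators; applying $(z-P_{\eps})^{-1}$ on the left and $\psi_{r,+}$ on the right, and using the sharper bound $(z-P_{\eps})^{-1}\psi_{r,+}={\cal O}(\eps^{-\widetilde{\delta}}):H(\Lambda)\rightarrow H(\Lambda,m)$ recalled from~\cite{HiSjVu} (valid under $h\leq{\cal O}(\eps^{9\widetilde{\delta}})$) together with the trace class inclusion $H(\Lambda,m)\hookrightarrow H(\Lambda)$ of (\ref{eq9.2}), one gets
\begeq
(z-P_{\eps})^{-1}\psi_{r,+}\equiv (e_0+e_1+\ldots+e_L)\psi_{r,+}
\endeq
modulo an operator of trace class norm ${\cal O}(h^{\infty})$, uniformly for $z\in\gamma_1$ satisfying (\ref{eq31.1}).

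Next I would compute ${\rm tr}\,\frac{1}{2\pi i}\int_{\gamma_1}(e_0+\ldots+e_L)\psi_{r,+}\,dz$ term by term. For the $h$--Weyl quantization one has the exact identity ${\rm tr}\,({\rm Op}_h(a){\rm Op}_h(b))=(2\pi h)^{-2}\int\!\!\!\int ab\,\mu(d\rho)$, so, using $\chi\psi_{r,+}=\psi_{r,+}$, the leading term $e_0\psi_{r,+}$ contributes exactly $\frac{1}{2\pi i}\frac{1}{(2\pi h)^2}\int_{\gamma_1}\int\!\!\!\int\frac{\psi_{r,+}(\rho)}{z-p_{\eps}(\rho)}\,\mu(d\rho)\,dz$, which is the claimed main term. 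For $j\geq 1$, the symbol of $e_j$ is obtained from that of $e_0$ by multiplying by $r_1$--type factors that are ${\cal O}(h\eps^{-2\widetilde{\delta}})$, so combining the pointwise bound $|z-p_{\eps}(\rho)|\geq\eps^{\widetilde{\delta}}/{\cal O}(1)$ on $\supp(\psi_{r,+})$ with the coarea estimate $\int\!\!\!\int\frac{\psi_{r,+}}{|z-p_{\eps}|}\,\mu(d\rho)={\cal O}(\log\frac1\eps)$, and, near infinity where $\Lambda$ agrees with $T^*M$, with (\ref{eq32.02}) and $1/m\in L^1$, one finds $\norm{e_j\psi_{r,+}}_{\rm tr}\leq h^{-2}{\cal O}(h^{j}\eps^{-2j\widetilde{\delta}}\log\frac1\eps)$. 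The dominant correction is the one with $j=1$; integrating over $\gamma_1$, whose length is ${\cal O}(\eps^{\widetilde{\delta}})$, and summing yields the remainder ${\cal O}(\eps^{\widetilde{\delta}}h\eps^{-2\widetilde{\delta}}h^{-2}\log\frac1\eps)$. The standing hypothesis $h\leq{\cal O}(\eps^{9\widetilde{\delta}})$ guarantees both that this is genuinely a small quantity and that the successive $r_k$ are under control; any discrepancy between the full and leading symbols of $P_{\eps}$ in the definition of $e_0$ is absorbed into the same remainder by the same coarea estimate. The case of $\psi_{r,-}$ is treated identically, now using $-\Re P_{\eps}\geq\eps^{\widetilde{\delta}}/{\cal O}(1)$ near the support.

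The main obstacle is essentially bookkeeping: one must keep the trace class norms of the $e_j$ and of the remainders $r_k$ uniformly controlled in $z$ along $\gamma_1$, even though the \emph{global} resolvent bound from Proposition \ref{prop2.2} is only ${\cal O}(1/(\eps h^{N_0}))$. This is where it is crucial that multiplying by $\psi_{r,+}$ places one in the region where $\Re P_{\eps}$ is elliptic, so that the much better estimate ${\cal O}(\eps^{-\widetilde{\delta}})$ from~\cite{HiSjVu} applies, and that the inclusion $H(\Lambda,m)\hookrightarrow H(\Lambda)$ is of trace class by (\ref{eq9.2}); the remainder of the argument is the classical elliptic parametrix iteration. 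The proposition is in fact the easy part of the trace analysis along $\gamma_1$, the serious work being the contributions of $\chi_1$, $\chi_3$ and of $\psi_{i,0}$, $\psi_{i,\pm}$ treated in the rest of this section.
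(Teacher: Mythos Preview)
Your proposal is correct and follows essentially the same approach as the paper: the elliptic parametrix iteration $(e_0+\cdots+e_L)$ built from $\chi/(z-P_{\eps})$, the reduction $(z-P_{\eps})^{-1}\psi_{r,+}\equiv(e_0+\cdots+e_L)\psi_{r,+}$ modulo ${\cal O}(h^{\infty})$ in trace class norm, and the bookkeeping $\norm{e_j\psi_{r,+}}_{\rm tr}\leq h^{-2}{\cal O}(h^j\eps^{-2j\widetilde{\delta}}\log\frac{1}{\eps})$ combined with $|\gamma_1|={\cal O}(\eps^{\widetilde{\delta}})$ to read off the remainder. You have made explicit a few points the paper leaves to the reader (the exact Weyl trace identity giving the main term from $e_0$, the disposal of the $r_k(1-\chi)\psi_{r,+}$ contributions by disjoint supports, and the full--versus--leading symbol discrepancy), but the argument is the same one the paper sketches in the paragraphs preceding the proposition.
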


\bigskip
\noindent
We shall next consider the trace integral
\begeq
\label{eq32.05}
{\rm tr}\, \frac{1}{2\pi i} \int_{\gamma_1} \left(z-P_{\eps}\right)^{-1} \psi_{i,-}\, dz.
\endeq
Here we are no longer in the elliptic region, and in order to understand (\ref{eq32.05}), we shall proceed by means of a suitable trace class
perturbation, concentrated in the non-elliptic region, followed by a contour deformation argument. To be precise,
let $\widetilde{P}_{\eps}$ be such that $\widetilde{P}_{\eps}=P_{\eps}$ near $\supp(\psi_{i,-})$ and
with $\Im \widetilde{P}_{\eps} \leq \eps F_3 -\eps {\eps^{\widetilde{\delta}}}/{{\cal O}(1)}$ in the entire region where
$\abs{\Re P_{\eps}}< \eps^{\widetilde{\delta}}/{{\cal O}(1)}$, while $\widetilde{P}_{\eps}$ agrees with $P_{\eps}$ outside of a slightly larger
set of the form $\abs{\Re P_{\eps}}< \eps^{\widetilde{\delta}}/{{\cal O}(1)}$. We shall arrange, as we can, so that the operator $\widetilde{P}_{\eps}$ is of the form
\begeq
\label{eq32.051}
\widetilde{P}_{\eps} = P_{\eps}- i\eps \chi,
\endeq
where $\chi= \chi(\rho;h)\in C^{\infty}_0(\Lambda)$ satisfies
\begeq
\label{eq32.052}
\nabla^{\ell} \chi = {\cal O}_{\ell}(\eps^{-\ell\widetilde{\delta}}),\quad \ell \geq 0.
\endeq

\medskip
\noindent
We shall make use of the following result.

\begin{lemma}
\label{lemma5.2}
Assume that $\widetilde{\delta}>0$ is so small that $h\leq {\cal O}(\eps^{9\widetilde{\delta}})$, and recall that along $\gamma_1$,
the bound ${\rm dist}(z,{\rm Spec}(P_{\eps}))\geq \eps h^{N_0}$ holds, for some fixed $N_0\geq 1$. We have
\begin{enumerate}
\item In the region where $\Re z \in [E_2,E_4]$, $\eps F_1 \leq \Im z \leq {\cal O}(\eps^{\widetilde{\delta}})$, the operator
$$
z - \widetilde{P}_{\eps}: H(\Lambda,m)\rightarrow H(\Lambda)
$$
is bijective, with
\begeq
\label{eq32.06}
\left(z-\widetilde{P}_{\eps}\right)^{-1}  = \frac{{\cal O}(1)}{\Im z -\eps F_3 + \eps \eps^{\widetilde{\delta}}}: H(\Lambda) \rightarrow H(\Lambda,m).
\endeq
\item It holds, in the sense of trace class operators on $H(\Lambda)$,
\begeq
\label{eq32.1}
(z-\widetilde{P}_{\eps})^{-1}\psi_{i,-} - \left(z-{P}_{\eps}\right)^{-1}\psi_{i,-} ={\cal O}(h^{\widetilde{M}}),\quad z\in \gamma_1,
\endeq
where $\widetilde{M}$ can be made arbitrarily large, by taking the integer $M$ in {\rm (\ref{eq32.03})} large enough.
\end{enumerate}
\end{lemma}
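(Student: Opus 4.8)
\emph{Proof proposal.} For part (1) the plan is to construct a global parametrix for $z-\widetilde{P}_{\eps}$ on $H(\Lambda)$, by essentially the same arguments that prove Proposition~\ref{prop2.2} (i.e.\ the machinery of Section 5 of~\cite{HiSjVu} built on the quantum Birkhoff normal form of Proposition~\ref{prop2.1}), the only new input being the strong dissipativity of $\widetilde{P}_{\eps}$. Concretely, one first records the symbolic lower bound
$$
\abs{z-\widetilde{p}_{\eps}(\rho)}\;\geq\;\frac{1}{{\cal O}(1)}\bigl(\Im z-\eps F_3+\eps\eps^{\widetilde{\delta}}\bigr),\qquad \rho\in\Lambda,
$$
valid for $z$ as in the statement: where $\abs{\Re P_{\eps}}<\eps^{\widetilde{\delta}}/{\cal O}(1)$ this comes from the defining property $\Im\widetilde{p}_{\eps}\leq \eps F_3-\eps\eps^{\widetilde{\delta}}/{\cal O}(1)$ of $\widetilde{P}_{\eps}$ together with $\Im z\geq\eps F_1>\eps F_3$, while on the complementary set $\widetilde{P}_{\eps}=P_{\eps}$ is elliptic ($\abs{\Re p_{\eps}}\gtrsim\eps^{\widetilde{\delta}}$, even $\gtrsim m$ near infinity), so that, $\abs{\Re z}=\abs{E_j}$ being taken $\leq\eps^{\widetilde{\delta}}/{\cal O}(1)$ with a small implied constant, $\abs{z-\widetilde{p}_{\eps}}\gtrsim\eps^{\widetilde{\delta}}$, which dominates $\Im z-\eps F_3+\eps\eps^{\widetilde{\delta}}={\cal O}(\eps^{\widetilde{\delta}})$. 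One then quantizes $1/(z-\widetilde{p}_{\eps})$ as in Section~\ref{review} and upgrades it to a parametrix: near $\widehat{\Lambda}_1$ and $\widehat{\Lambda}_3$ this is done through the normal form of Proposition~\ref{prop2.1}, where $\widetilde{P}_{\eps}$ becomes, modulo ${\cal O}((h,\xi,\eps)^{N+1})$, a Fourier multiplier with $\Im(\text{symbol})\leq\eps F_3-\eps\eps^{\widetilde{\delta}}/{\cal O}(1)$, hence at distance $\gtrsim\eps$ from $z\in\gamma_1$; away from $\widehat{\Lambda}_1\cup\widehat{\Lambda}_3$ one patches this with the elliptic parametrix on $\{\abs{\Re P_{\eps}}\gtrsim\eps^{\widetilde{\delta}}\}$ and with the dissipative parametrix elsewhere, exactly as in~\cite{HiSjVu}, all error terms being negligible since $h\leq\eps^{10\widetilde{\delta}}\log(1/\eps)$. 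Inverting the resulting $1+R$, $\norm{R}=o(1)$, gives the bijectivity $H(\Lambda,m)\to H(\Lambda)$ (the mapping into $H(\Lambda,m)$ using $1/(z-\widetilde{p}_{\eps})={\cal O}(1/m)$ at infinity) and, from $\sup_{\Lambda}\abs{1/(z-\widetilde{p}_{\eps})}$, the estimate~(\ref{eq32.06}).

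For part (2) I would start from the resolvent identity, which in view of $\widetilde{P}_{\eps}=P_{\eps}-i\eps\chi$ gives
$$
\bigl(z-\widetilde{P}_{\eps}\bigr)^{-1}\psi_{i,-}-\bigl(z-P_{\eps}\bigr)^{-1}\psi_{i,-}\;=\;-i\eps\bigl(z-\widetilde{P}_{\eps}\bigr)^{-1}\chi\,\bigl(z-P_{\eps}\bigr)^{-1}\psi_{i,-},
$$
where $\chi$ is as in~(\ref{eq32.051}) and vanishes in a \neigh{} of $\supp\psi_{i,-}$ since $\widetilde{P}_{\eps}=P_{\eps}$ there. The crux is that
$$
\bigl[\bigl(z-P_{\eps}\bigr)^{-1},\psi_{i,-}\bigr]\;=\;\bigl(z-P_{\eps}\bigr)^{-1}[P_{\eps},\psi_{i,-}]\bigl(z-P_{\eps}\bigr)^{-1}\;=\;{\cal O}(h^{\widetilde{M}})
$$
in trace class norm on $H(\Lambda)$, for $z\in\gamma_1$: since $\psi_{i,-}$ is microsupported where the cut-off $A$ of~(\ref{eq32.03}) equals $1$, combining~(\ref{eq32.03}),~(\ref{eq32.031}) with a disjoint-support argument gives $[P_{\eps},\psi_{i,-}]={\cal O}(h^{M})$ in trace class norm, while $(z-P_{\eps})^{-1}={\cal O}(1/(\eps h^{N_0}))$ by Proposition~\ref{prop2.2}; inserting the trace class inclusion $H(\Lambda,m)\hookrightarrow H(\Lambda)$ from~(\ref{eq9.2}) then bounds the above trace class norm by ${\cal O}(h^{M}\eps^{-2}h^{-2N_0})={\cal O}(h^{\widetilde{M}})$ as soon as $M$ is large enough (recall $\eps\geq h^{K}$). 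Using this, together with $\chi\psi_{i,-}={\cal O}(h^{\infty})$ (disjoint supports), one rewrites $\chi(z-P_{\eps})^{-1}\psi_{i,-}=\chi\psi_{i,-}(z-P_{\eps})^{-1}+\chi\,[(z-P_{\eps})^{-1},\psi_{i,-}]={\cal O}(h^{\infty})+{\cal O}(h^{\widetilde{M}})$ in trace class, and composing on the left with $\eps(z-\widetilde{P}_{\eps})^{-1}={\cal O}(1)$ — by part (1), as $\Im z-\eps F_3+\eps\eps^{\widetilde{\delta}}\gtrsim\eps$ along $\gamma_1$ — yields the asserted bound.

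The main difficulty lies in part (1): because $\eps$ may be as small as $h^{K}$ with $K>1$, one cannot assume $h\ll\eps$, so the parametrix in the dissipative region cannot be controlled by a naive symbolic expansion at scale $\eps$; this is precisely where the quantum Birkhoff normal form near the Diophantine tori and the particular construction of the IR-manifold $\Lambda$ from~\cite{HiSjVu} must be invoked to reduce the symbolic errors to harmless ones. Part (2) is then essentially bookkeeping with the resolvent identity, the only point needing care being to keep all powers of $1/\eps$ and $1/h$ under control when passing to trace class norms.
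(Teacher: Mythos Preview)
Your approach to part~(1) is more elaborate than the paper's. The paper simply observes that in the region $\abs{\Re P_{\eps}}<\eps^{\widetilde{\delta}}/{\cal O}(1)$ one has $\frac{1}{\eps}\Im\widetilde{P}_{\eps}\leq F_3-\eps^{\widetilde{\delta}}/{\cal O}(1)$, and invokes the sharp G\aa{}rding inequality (together with the localization and patching machinery of Section~5 of~\cite{HiSjVu}) to pass from this symbolic bound to the resolvent estimate. Your parametrix route can be made to work, but note a slip: the quantum Birkhoff normal form of Proposition~\ref{prop2.1} is for $P_{\eps}$, not for $\widetilde{P}_{\eps}=P_{\eps}-i\eps\chi$; conjugating by $U_j$ turns the perturbation $-i\eps\chi$ into a genuine pseudodifferential operator rather than a Fourier multiplier, so your claim that near $\widehat{\Lambda}_j$ the operator $\widetilde{P}_{\eps}$ becomes a Fourier multiplier needs an extra argument (e.g.\ arranging $\chi$ to depend only on the action variables there). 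The sharp G\aa{}rding route sidesteps this entirely.

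Part~(2) has a genuine gap. Your argument hinges on the claim that $[P_{\eps},\psi_{i,-}]={\cal O}(h^{M})$ in trace class norm, justified by saying that $\psi_{i,-}$ is microsupported where $A=1$. But this is not so: the support of $\psi_{i,-}$ extends to the transition zone with $\psi_{r,\pm}$, where $\abs{\Re P_{\eps}}\sim\eps^{\widetilde{\delta}}$, and there the commutator is only ${\cal O}(h\eps^{-2\widetilde{\delta}})$. This is exactly why the paper states \eqref{eq32.03},~\eqref{eq32.031} with the cutoff $A$ rather than without it, and why a single commutator does not suffice. The paper instead bounds $(1-\widehat{\psi}_{i,-})(z-P_{\eps})^{-1}\psi_{i,-}$ by an iterated expansion with nested cutoffs $\psi_{i,-}=\psi\prec\psi_1\prec\cdots\prec\psi_L\prec\widehat{\psi}_{i,-}$: in the resulting product of $L$ factors $(z-P_{\eps})^{-1}[P_{\eps},\psi_j]$, each factor is split via $A$, the ``good'' piece $A[P_{\eps},\psi_j]$ contributing ${\cal O}(h^{M})$ and the ``bad'' piece, coupled with $(z-P_{\eps})^{-1}(1-A)={\cal O}(\eps^{-\widetilde{\delta}})$, contributing ${\cal O}(h\eps^{-3\widetilde{\delta}})$. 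Only after taking $L$ large does the product of bad factors beat the polynomial losses from $(z-P_{\eps})^{-1}={\cal O}(1/(\eps h^{N_0}))$ and the trace class norm of $\psi$. Your single-step commutator identity lacks this iteration and cannot absorb those losses.
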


\begin{proof}
In the region where $\abs{\Re P_{\eps}} < \eps^{\widetilde{\delta}}/{\cal O}(1)$, we have $\Im \widetilde{P}_{\eps} = {\cal O}(\eps)$ and
$$
\frac{1}{\eps} \Im \widetilde{P}_{\eps} \leq F_3 - \frac{\eps^{\widetilde{\delta}}}{{\cal O}(1)}.
$$
The statement (1) is obtained therefore by a standard application of the sharp G\aa{}rding inequality. See also Section 5 of~\cite{HiSjVu} for the
details of a similar argument.

When establishing (2), let us notice first that the expression in the left hand side of (\ref{eq32.1}) is equal to
\begeq
\label{eq32.2}
(z-\widetilde{P}_{\eps})^{-1}(\widetilde{P}_{\eps}-P_{\eps})\left(z-P_{\eps}\right)^{-1}\,\psi_{i,-}.
\endeq
Let $\widehat{\psi}_{i,-}$ be such that $\widehat{\psi}_{i,-}=1$ near $\supp(\psi_{i,-})$ and with $\widetilde{P}_{\eps}=P_{\eps}$
near $\supp(\widehat{\psi}_{i,-})$. We can also arrange that (\ref{eq32.03}) and (\ref{eq32.031}) are also valid for $\widehat{\psi}_{i,-}$.
Modulo ${\cal O}(h^{\widetilde{M}})$ in the trace class norm, we may replace the expression in
(\ref{eq32.2}) by
$$
\left(z-\widetilde{P}_{\eps}\right)^{-1}\left(\widetilde{P}_{\eps}-P_{\eps}\right)\left(1-\widehat{\psi}_{i,-}\right)
\left(z-P_{\eps}\right)^{-1}\,\psi_{i,-},
$$
whose trace class norm does not exceed ${\cal O}(\eps^{-\widetilde{\delta}})$ times the trace class norm of
$$
\left(1-\widehat{\psi}_{i,-}\right)\left(z-P_{\eps}\right)^{-1}\,\psi_{i,-}.
$$
When estimating the latter, we may follow some arguments of~\cite{Sj01}. When $L\in \nat$, let
$$
\psi_{i,-}:=\psi \prec \psi_1 \prec \ldots \prec \psi_L \prec \widehat{\psi}:=\widehat{\psi}_{i,-}.
$$
Here we arrange also that (\ref{eq32.03}) and (\ref{eq32.031}) are valid for $\psi_j$, $1\leq j \leq L$. Modulo terms that
are ${\cal O}(h^{\infty})$ in the trace class norm, we may write
\begin{multline}
\label{eq32.3}
\left(z-P_{\eps}\right)^{-1}\psi \\
\equiv \sum_{j=1}^L \psi_j (z-P_{\eps})^{-1}[P_{\eps},\psi_{j-1}]\ldots (z-P_{\eps})^{-1}[P_{\eps},\psi_1](z-P_{\eps})^{-1}\psi \\ \nonumber
+ (z-P_{\eps})^{-1}[P_{\eps},\psi_L] (z-P_{\eps})^{-1} [P_{\eps},\psi_{L-1}]\ldots\, [P_{\eps},\psi_1](z-P_{\eps})^{-1}\psi,
\end{multline}
and therefore, still modulo terms that are ${\cal O}(h^{\infty})$ in the trace class norm, we obtain that
\begin{multline}
\left(1-\widehat{\psi}\right)\left(z-P_{\eps}\right)^{-1}\psi \\ \nonumber
\equiv \left(1-\widehat{\psi}\right)
(z-P_{\eps})^{-1}[P_{\eps},\psi_L] (z-P_{\eps})^{-1} [P_{\eps},\psi_{L-1}]\ldots\, [P_{\eps},\psi_1](z-P_{\eps})^{-1}\psi.
\end{multline}
Here we recall that for $1\leq j\leq L$,
$$
A[P_{\eps},\psi_j] = {\cal O}(h^{\widetilde{M}}),\quad [P_{\eps},\psi_j]A={\cal O}(h^{\widetilde{M}}),
$$
in the trace class norm, where $A$ is a microlocal cut-off to the region where $\abs{\Re P_{\eps}}\leq \eps^{\widetilde{\delta}}/{\cal O}(1)$. Using
also that
$$
(z-P_{\eps})^{-1} (1-A)  = {\cal O}\left(\frac{1}{\eps^{\widetilde{\delta}}}\right): H(\Lambda) \rightarrow H(\Lambda,m),
$$
while the trace class norm of this operator is
$$
\frac{1}{h^2} {\cal O}\left(\frac{1}{\eps^{\widetilde{\delta}}}\right),
$$
as well as the fact that $[P_{\eps},\psi_j] = {\cal O}(h/\eps^{\widetilde{2\delta}})$, we obtain the second result.
\end{proof}

\bigskip
\noindent
An application of Lemma \ref{lemma5.2} shows that
\begeq
\label{eq32.5}
{\rm tr}\, \frac{1}{2\pi i} \int_{\gamma_1} \left(z-{P}_{\eps}\right)^{-1}\,\psi_{i,-}\,dz=
{\rm tr}\, \frac{1}{2\pi i}\int_{\gamma_1}\left(z-\widetilde{P}_{\eps}\right)^{-1}\,\psi_{i,-}\,dz + {\cal O}(h^{\widetilde{M}}).
\endeq
Here
\begeq
\label{eq33}
\int_{\gamma_1} \left(z-\widetilde{P}_{\eps}\right)^{-1}\,\psi_{i,-}\,dz=
\int_{\widetilde{\gamma}_1}\left(z-\widetilde{P}_{\eps}\right)^{-1}\,\psi_{i,-}\,dz,
\endeq
where $\widetilde{\gamma}_1$ is a piecewise linear contour contained in the region where $\Im z \geq \eps F_1$ and
having the same endpoints as $\gamma_1$, such that the closed contour $(-\gamma_1) \cup \widetilde{\gamma}_1$ is the positively oriented boundary
of the triangle with the third corner at the point $(E_2+E_4)/2+i\eps F_1+i\eps^{\widetilde{\delta}}$. When understanding
the integral in the right hand side of (\ref{eq33}), it will be convenient to decompose the function $\psi_{i,-}$ further, according to the values
taken by $\Re P_{\eps}$, so that we write
\begeq
\label{eq33.1}
\psi_{i,-} = \psi_{i,-,2}+\psi_{i,-,4} + \widetilde{\psi}_{i,-}.
\endeq
Here the functions $\psi_{i,-,j}$ are supported in the regions where $\abs{\Re P_{\eps}-E_j}\leq \eps^{3\widetilde{\delta}}/{\cal O}(1)$, $j=2,4$,
respectively, while the support of $\widetilde{\psi}_{i,-}$ stays away from these regions.

\medskip
\noindent
We shall consider first the integral
\begeq
\label{eq33.2}
\int_{\widetilde{\gamma}_1}\left(z-\widetilde{P}_{\eps}\right)^{-1}\,\widetilde{\psi}_{i,-}\,dz.
\endeq
In the support of $\widetilde{\psi}_{i,-}$, we have,
\begeq
\label{eq34}
\abs{z-\widetilde{P}_{\eps}(\rho)}\geq \frac{\eps^{3\widetilde{\delta}}}{{\cal O}(1)},\quad z\in \widetilde{\gamma}_1,
\endeq
which is an essentially elliptic estimate. Using (\ref{eq32.051}), (\ref{eq32.052}), and (\ref{eq34}) we then obtain that
$$
\nabla^{\ell} \left((z-\widetilde{P}_{\eps}(\rho))^{-1}\widetilde{\psi}_{i,-}\right) =
\eps^{-3\widetilde{\delta}} {\cal O}\left(\eps^{-3 \widetilde{\delta} \ell}\right),\quad \ell\in \nat.
$$
Also, the trace class norm of the corresponding Weyl quantization does not exceed
$$
\frac{1}{h^2} {\cal O}(\eps^{-2\widetilde{\delta}}).
$$
Here we may replace the function $\widetilde{\psi}_{i,-}$ by another cut-off with a slightly larger support. It is therefore clear that the
integral in (\ref{eq33.2}) can be understood by constructing an $h$-pseudo\-dif\-feren\-tial parametrix for
$z-\widetilde{P}_{\eps}$, valid near the support of $\widetilde{\psi}_{i,-}$, by following the same method as in the proof of Proposition \ref{prop4.1}.
We therefore conclude that the trace integral
$$
{\rm tr}\, \frac{1}{2\pi i} \int_{\widetilde{\gamma}_1} \left(z-\widetilde{P}_{\eps}\right)^{-1}\,\widetilde{\psi}_{i,-}\,dz
$$
is equal to
\begeq
\label{eq34.001}
\frac{1}{2\pi i}\frac{1}{(2\pi h)^2} \int_{\widetilde{\gamma}_1}\int\!\!\!\int \frac{1}{z-\widetilde{p}_{\eps}(\rho)}\widetilde{\psi}_{i,-}(\rho)\,\mu(d\rho)\,dz
+ {\cal O}\left(\frac{h \eps^{-7\widetilde{\delta}}}{h^2}\right).
\endeq
Here $\widetilde{p}_{\eps}$ is the leading symbol of $\widetilde{P}_{\eps}$, where we know that $\widetilde{p}_{\eps}-p_{\eps}={\cal O}(\eps)$ is supported
in a region where $\abs{\Re P_{\eps}}\leq \eps^{\widetilde{\delta}}/{\cal O}(1)$, away from the set where $\Im P_{\eps}\leq \eps F_3 -
\eps\eps^{\widetilde{\delta}}/{\cal O}(1)$.

\medskip
\noindent
According to (\ref{eq33}) and (\ref{eq33.1}), it remains to consider the integrals
\begeq
\label{eq34.01}
\int_{\widetilde{\gamma}_1} (z-\widetilde{P}_{\eps})^{-1}\,\psi_{i,-,j}\,dz,\quad j=2,4.
\endeq
Here an application of the bound (\ref{eq32.06}) in Lemma \ref{lemma5.2} together with the fact that the trace class norm of
$\psi_{i,-,j}$ on $H(\Lambda)$ for $j=2,4$ is
$$
{\cal O}\left(\frac{\eps^{3\widetilde{\delta}}}{h^2}\right),
$$
shows that the trace class norm of the expressions in (\ref{eq34.01}) does not exceed
\begeq
\label{eq34.02}
{\cal O}\left(\frac{\eps^{3\widetilde{\delta}}}{h^2}\right) \int_{\eps \eps^{\widetilde{\delta}}}^1 \frac{1}{s}\,ds =
{\cal O}(1) \frac{\eps^{3\widetilde{\delta}}}{h^2} \log\frac{1}{\eps}.
\endeq

\medskip
\noindent
Combining (\ref{eq32.5}), (\ref{eq33}), (\ref{eq33.1}), (\ref{eq34.001}), and (\ref{eq34.02}) we conclude that the trace
\begeq
\label{eq34.03}
{\rm tr}\,\frac{1}{2\pi i}\int_{\gamma_1} \left(z-P_{\eps}\right)^{-1}\,\psi_{i,-}\,dz
\endeq
is given by
\begeq
\label{eq34.04}
\frac{1}{2\pi i}\frac{1}{(2\pi h)^2}
\int_{\widetilde{\gamma}_1} \int\!\!\!\int \frac{1}{z-\widetilde{p}_{\eps}(\rho)}\widetilde{\psi}_{i,-}(\rho)\,\mu(d\rho)\,dz
+ {\cal O}(1) \frac{\eps^{3\widetilde{\delta}}}{h^2} \log\frac{1}{\eps},
\endeq
provided that the lower bound (\ref{eq31.01}) is strengthened to the following one,
\begeq
\label{eq34.041}
h\leq \eps^{10\widetilde{\delta}} \log\frac{1}{\eps}.
\endeq
Here the integral
$$
\int_{\widetilde{\gamma}_1} \int\!\!\!\int \frac{1}{z-\widetilde{p}_{\eps}(\rho)}\widetilde{\psi}_{i,-}(\rho)\,\mu(d\rho)\,dz
$$
can be replaced by the integral
$$
\int_{\gamma_1} \int\!\!\!\int  \frac{1}{z-p_{\eps}(\rho)}\psi_{i,-}(\rho)\,\mu(d\rho)\,dz,
$$
at the expense of an additional error not exceeding
$$
{\cal O}\left(\frac{\eps^{3\widetilde{\delta}}}{h^2}\right) \log \frac{1}{\eps}.
$$

\medskip
\noindent
The result obtained so far is typical of the behavior of the trace integrals in question in the non-elliptic region away from the tori $\widehat{\Lambda}_j$,
and so we state it in the following proposition.
\begin{prop}
\label{prop5.3}
Assume that $0<\widetilde{\delta}< 1$ is such that $h\leq \eps^{10\widetilde{\delta}}\log\frac{1}{\eps}$, and recall that
$$
\Im P_{\eps} \leq \eps F_3  - \frac{\eps \eps^{\widetilde{\delta}}}{{\cal O}(1)},
$$
near ${\rm supp}\,(\psi_{i,-})$, where $\psi_{i,-}$ satisfies {\rm (\ref{eq32.03})}, {\rm (\ref{eq32.031})}.
We have
\begin{multline}
\label{eq34.05}
{\rm tr}\,\frac{1}{2\pi i}\int_{\gamma_1} \left(z-P_{\eps}\right)^{-1}\,\psi_{i,-}\,dz \\
= \frac{1}{2\pi i}\frac{1}{(2\pi h)^2} \int_{\gamma_1} \int\!\!\!\int  \frac{1}{z-p_{\eps}(\rho)}\psi_{i,-}(\rho)\,\mu(d\rho)\,dz
+ {\cal O}\left(\frac{\eps^{3\widetilde{\delta}}}{h^2}\right) \log \frac{1}{\eps}.
\end{multline}
\end{prop}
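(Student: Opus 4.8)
The plan is to reproduce, for the cut-off $\psi_{i,-}$, exactly the structure already used in the discussion leading up to the statement, and simply to assemble the estimates already derived. First I would recall that by Lemma \ref{lemma5.2}, part (2), replacing the resolvent $(z-P_{\eps})^{-1}$ by $(z-\widetilde P_{\eps})^{-1}$ in the trace integral localized to $\supp(\psi_{i,-})$ costs only ${\cal O}(h^{\widetilde M})$, which is negligible once $\widetilde M$ is chosen large enough; this is (\ref{eq32.5}). Next I would use that $\widetilde P_{\eps}$ has been arranged so that $\Im\widetilde P_{\eps}\le \eps F_3-\eps\eps^{\widetilde\delta}/{\cal O}(1)$ throughout the region $\abs{\Re P_{\eps}}<\eps^{\widetilde\delta}/{\cal O}(1)$, hence $z-\widetilde P_{\eps}$ is invertible with the good bound (\ref{eq32.06}) for $z$ in the half-plane $\Im z\ge \eps F_1>\eps F_3$. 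This invertibility is what licenses the contour deformation $\gamma_1\rightsquigarrow\widetilde\gamma_1$ in (\ref{eq33}): the resolvent $(z-\widetilde P_{\eps})^{-1}\psi_{i,-}$ is holomorphic in $z$ in the closed triangle bounded by $(-\gamma_1)\cup\widetilde\gamma_1$, so Cauchy's theorem applies.

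Then I would carry out the splitting (\ref{eq33.1}), $\psi_{i,-}=\psi_{i,-,2}+\psi_{i,-,4}+\widetilde\psi_{i,-}$. For the piece $\widetilde\psi_{i,-}$, whose support avoids the vertical strips $\abs{\Re P_{\eps}-E_j}\le\eps^{3\widetilde\delta}/{\cal O}(1)$, the elliptic lower bound (\ref{eq34}) holds along $\widetilde\gamma_1$, and one constructs an $h$-pseudodifferential parametrix for $z-\widetilde P_{\eps}$ near $\supp(\widetilde\psi_{i,-})$ by the iteration already used in the proof of Proposition \ref{prop4.1}, with symbol derivative bounds $\nabla^\ell((z-\widetilde p_{\eps})^{-1}\widetilde\psi_{i,-})=\eps^{-3\widetilde\delta}{\cal O}(\eps^{-3\widetilde\delta\ell})$ and trace-class norm ${\cal O}(h^{-2}\eps^{-2\widetilde\delta})$; each parametrix correction gains a factor $h\eps^{-2\widetilde\delta}$, so finitely many steps suffice to reach any prescribed power of $h$. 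This yields (\ref{eq34.001}): the trace equals the scalar phase-space integral of $(z-\widetilde p_{\eps})^{-1}\widetilde\psi_{i,-}$ plus ${\cal O}(h\eps^{-7\widetilde\delta}/h^2)$. For the two remaining pieces $\psi_{i,-,j}$, $j=2,4$, I would not attempt a parametrix; instead I would bound the trace-class norm crudely, using (\ref{eq32.06}) for the resolvent and the observation that the trace-class norm of $\psi_{i,-,j}$ is ${\cal O}(\eps^{3\widetilde\delta}/h^2)$ (the symbol is supported on a set of symplectic volume ${\cal O}(\eps^{3\widetilde\delta})$). Integrating $1/(\Im z-\eps F_3+\eps\eps^{\widetilde\delta})$ over $\widetilde\gamma_1$ produces the logarithmic factor $\int_{\eps\eps^{\widetilde\delta}}^1 s^{-1}\,ds={\cal O}(\log\frac1\eps)$, giving the bound (\ref{eq34.02}), namely ${\cal O}(\eps^{3\widetilde\delta}h^{-2}\log\frac1\eps)$.

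Finally I would collect the pieces: combining (\ref{eq32.5}), (\ref{eq33}), (\ref{eq33.1}), (\ref{eq34.001}), (\ref{eq34.02}) gives the trace (\ref{eq34.03}) as in (\ref{eq34.04}), where the dominant error $h\eps^{-7\widetilde\delta}/h^2$ is reabsorbed into ${\cal O}(\eps^{3\widetilde\delta}h^{-2}\log\frac1\eps)$ precisely under the strengthened hypothesis $h\le\eps^{10\widetilde\delta}\log\frac1\eps$ in (\ref{eq34.041}). One last replacement is needed: in (\ref{eq34.04}) the integral involves $\widetilde p_{\eps}$ and $\widetilde\psi_{i,-}$ over the deformed contour $\widetilde\gamma_1$, whereas the statement is in terms of $p_{\eps}$, $\psi_{i,-}$ over $\gamma_1$. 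Since $\widetilde p_{\eps}-p_{\eps}={\cal O}(\eps)$ is supported where $\abs{\Re P_{\eps}}\le\eps^{\widetilde\delta}/{\cal O}(1)$ and $\Im\widetilde p_{\eps}$ stays $\ge\eps F_3+\eps\eps^{\widetilde\delta}/{\cal O}(1)$ away from the pole while $z\in\widetilde\gamma_1$ has $\Im z\ge\eps F_1$, undoing the contour deformation and replacing $\widetilde p_{\eps}$ by $p_{\eps}$ at the level of the scalar integral costs again only ${\cal O}(\eps^{3\widetilde\delta}h^{-2}\log\frac1\eps)$, by the same kind of $\int s^{-1}\,ds$ estimate used for the difference of scalar integrals on page concerning $\gamma_2$; likewise the three pieces of (\ref{eq33.1}) recombine into $\psi_{i,-}$. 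This gives (\ref{eq34.05}). The main obstacle, and the only place real care is needed, is the uniform control of the crude resolvent bound (\ref{eq32.06}) and of the trace-class norms of the localized remainders near the vertical boundary strips $\Re P_{\eps}\approx E_j$, where no ellipticity in $\Re P_{\eps}$ is available; everything there is forced to go through the lossy bound (\ref{eq32.06}), and it is exactly the interplay between the $\eps^{3\widetilde\delta}$ width of those strips, the $\log\frac1\eps$ from the $z$-integration, and the smallness condition (\ref{eq34.041}) that makes the error acceptable.
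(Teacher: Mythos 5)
Your proposal follows essentially the same route as the paper's own argument: the reduction to $\widetilde{P}_{\eps}$ via Lemma \ref{lemma5.2}, the contour deformation to $\widetilde{\gamma}_1$, the splitting (\ref{eq33.1}) with a parametrix on the elliptic piece and the crude bound (\ref{eq34.02}) on the pieces near $\Re P_{\eps}=E_j$, and the final recombination under the condition (\ref{eq34.041}). The bookkeeping of the error terms is also the same, so the proposal is correct as it stands.
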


\bigskip
\noindent
It is now easy to extend the result of Proposition \ref{prop5.3} to the trace integral
$$
{\rm tr}\,\frac{1}{2\pi i}\int_{\gamma_1} \left(z-P_{\eps}\right)^{-1}\,\left(1-\chi_1\right)\,dz.
$$
Indeed, the argument leading to the asymptotic result (\ref{eq34.05}) remains valid also when considering the trace integrals
$$
\frac{1}{2\pi i}\int_{\gamma_1}\left(z-P_{\eps}\right)^{-1}\,\psi_{i,0}\,dz
$$
and
$$
\frac{1}{2\pi i}\int_{\gamma_1}\left(z-P_{\eps}\right)^{-1}\,\chi_{3}\,dz,
$$
where we may also recall that $F_3 < F_1$.

\bigskip
\noindent
When considering the expression
$$
\int_{\gamma_1} \left(z-P_{\eps}\right)^{-1}\,\psi_{i,+}\,dz,
$$
we introduce a new trace class perturbation $\widehat{P}_{\eps}$ of $P_{\eps}$, such that
$\widehat{P}_{\eps}=P_{\eps}$ near $\supp(\psi_{i,+})$, and with
$$
\Im \widehat{P}_{\eps}\geq \eps F_1 +\frac{\eps \eps^{\widetilde{\delta}}}{{\cal O}(1)}
$$
in the entire region where
$\abs{\Re P_{\eps}}\leq \eps^{\widetilde{\delta}}/{\cal O}(1)$, and such that $\widehat{P}_{\eps}$ agrees with $P_{\eps}$ further away from this set.
The natural analogue of Lemma \ref{lemma5.2} continues to be valid for $(z-\widehat{P}_{\eps})^{-1}$, and when studying the trace of
$$
\int_{\gamma_1} (z-\widehat{P}_{\eps})^{-1}\, \psi_{i,+}\,dz,
$$
we can therefore deform the contour $\gamma_1$ downwards and introduce the decomposition similar to (\ref{eq33.1}),
so that the estimate (\ref{eq34}) holds for $z-\widehat{P}_{\eps}$ along the deformed contour, near the support of $\widetilde{\psi}_{i,+}$.

\bigskip
\noindent
The following is the main result of this subsection.
\begin{prop}
\label{prop5.4}
Assume that $E_2<0<E_4$, $\abs{E_j}\sim \eps^{\widetilde{\delta}}$, $j=2,4$, where $0<\widetilde{\delta}<1$ is so small that
$h\leq \eps^{10\widetilde{\delta}}\log(1/\eps)$. Let $\gamma_j$, $j=1,3$ be the horizontal segment given by $E_2 \leq \Re z \leq E_4$, $\Im z =\eps F_j$.
Let finally $0\leq \chi_j\in C^{\infty}_0(\Lambda)$ be a cut-off function to an $\eps^{\widetilde{\delta}}$-\neigh{} of $\widehat{\Lambda}_j$, $j=1,3$,
enjoying the commutator property {\rm (\ref{eq32.001})}. We have
\begin{multline}
{\rm tr}\,\frac{1}{2\pi i}\int_{\gamma_j} \left(z-P_{\eps}\right)^{-1}\,(1-\chi_j)\,dz \\ \nonumber
= \frac{1}{2\pi i}\frac{1}{(2\pi h)^2} \int_{\gamma_j} \int\!\!\!\int  \frac{1}{z-p_{\eps}(\rho)}(1-\chi_j(\rho))\,\mu(d\rho)\,dz
+ {\cal O}\left(\frac{\eps^{3\widetilde{\delta}}}{h^2}\right) \log \frac{1}{\eps},\quad j=1,3.
\end{multline}
\end{prop}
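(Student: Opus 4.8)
\medskip
\noindent
The plan is to reduce the statement to results already established by decomposing the operator $(z-P_{\eps})^{-1}(1-\chi_j)$ along the partition of unity {\rm (\ref{eq32})}. Taking $j=1$ (the case $j=3$ is symmetric, with the roles of $F_1$ and $F_3$, of $\psi_{i,+}$ and $\psi_{i,-}$, and of $\chi_1$ and $\chi_3$ interchanged), I would write
$$
1-\chi_1 = \chi_3 + \psi_{r,+} + \psi_{r,-} + \psi_{i,-} + \psi_{i,0} + \psi_{i,+},
$$
and analyze ${\rm tr}\,\frac{1}{2\pi i}\int_{\gamma_1}(z-P_{\eps})^{-1}(\,\cdot\,)\,dz$ for each summand. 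The terms $\psi_{r,\pm}$ are handled by Proposition \ref{prop5.1}, and the term $\psi_{i,-}$ by Proposition \ref{prop5.3}. For $\psi_{i,0}$ and for $\chi_3$ one observes, as indicated after Proposition \ref{prop5.3}, that along $\gamma_1$ we have $\Im z=\eps F_1$, while near ${\rm supp}\,\psi_{i,0}$ and near ${\rm supp}\,\chi_3$ (recall $F_3<F_1$ and that $\chi_3$ is concentrated in an $\eps^{\widetilde\delta}$-\neigh{} of $\widehat\Lambda_3$, where $\Im P_{\eps}/\eps$ is close to $F_3$) one has $\Im P_{\eps}\leq \eps F_1-\eps\eps^{\widetilde\delta}/{\cal O}(1)$; hence the argument of Proposition \ref{prop5.3} applies with the obvious changes, using a trace class perturbation for which the analogue of Lemma \ref{lemma5.2} holds and deforming the contour upwards. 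For $\psi_{i,+}$ one uses instead the trace class perturbation $\widehat P_{\eps}$ described above, deforms the contour downwards, and splits $\psi_{i,+}$ according to the values of $\Re P_{\eps}$ as in {\rm (\ref{eq33.1})}.

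\medskip
\noindent
Summing the six contributions, the leading terms reassemble --- after replacing $\widetilde p_{\eps}$ (resp. $\widehat p_{\eps}$) by $p_{\eps}$ and the deformed contours by $\gamma_1$, which as in the estimate following {\rm (\ref{eq34.04})} and as in Section \ref{vertical} costs only ${\cal O}(\eps^{3\widetilde\delta}/h^2)\log(1/\eps)$, the relevant discrepancy being supported in a set of $\mu$-volume ${\cal O}(\eps^{3\widetilde\delta})$ and carrying only an integrable logarithmic singularity --- into
$$
\frac{1}{2\pi i}\frac{1}{(2\pi h)^2}\int_{\gamma_1}\int\!\!\!\int \frac{1}{z-p_{\eps}(\rho)}(1-\chi_1(\rho))\,\mu(d\rho)\,dz.
$$
As for the remainders, the non-elliptic pieces $\psi_{i,-},\psi_{i,0},\chi_3,\psi_{i,+}$ each contribute ${\cal O}(\eps^{3\widetilde\delta}/h^2)\log(1/\eps)$, whereas Proposition \ref{prop5.1} contributes ${\cal O}(h\eps^{-\widetilde\delta}h^{-2}\log(1/\eps))$ for the two pieces $\psi_{r,\pm}$. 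Under the standing hypothesis $h\leq \eps^{10\widetilde\delta}\log(1/\eps)$ one has $h\eps^{-\widetilde\delta}\leq \eps^{9\widetilde\delta}\log(1/\eps)\leq {\cal O}(\eps^{3\widetilde\delta})$ for $\eps$ small, so every remainder is absorbed into ${\cal O}(\eps^{3\widetilde\delta}/h^2)\log(1/\eps)$, which yields the claimed identity. The case $j=3$ follows the same pattern, the horizontal segment $\gamma_3$ now sitting at height $\eps F_3$.

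\medskip
\noindent
The step requiring the most care is not a single estimate but the bookkeeping of the auxiliary perturbations: for each of $\psi_{i,0}$, $\chi_3$ and $\psi_{i,+}$ one must arrange the perturbation so that the commutator bounds {\rm (\ref{eq32.03})}, {\rm (\ref{eq32.031})} and the conclusion of Lemma \ref{lemma5.2} carry over, and one must take the integer $M$ in {\rm (\ref{eq32.001})}, {\rm (\ref{eq32.03})} --- hence the integer $N$ in Proposition \ref{prop2.1} --- large enough that every error term produced by the parametrix expansions is ${\cal O}(h^\infty)$ in trace class norm. The one genuinely quantitative point is the absorption of the elliptic-region remainder of Proposition \ref{prop5.1}: it is precisely in order to secure $h\eps^{-\widetilde\delta}\log(1/\eps)\leq {\cal O}(\eps^{3\widetilde\delta})$ that the smallness hypothesis has been strengthened from $h\leq {\cal O}(\eps^{9\widetilde\delta})$ to $h\leq \eps^{10\widetilde\delta}\log(1/\eps)$.
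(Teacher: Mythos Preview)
Your proposal is correct and follows essentially the same route as the paper: decompose $1-\chi_1$ via the partition of unity (\ref{eq32}), handle $\psi_{r,\pm}$ by Proposition~\ref{prop5.1}, $\psi_{i,-}$ by Proposition~\ref{prop5.3}, observe that the same argument covers $\psi_{i,0}$ and $\chi_3$ (both sit below the level $\eps F_1$), and treat $\psi_{i,+}$ by the mirror construction with $\widehat P_\eps$ and a downward contour deformation. One small correction to your final paragraph: in the paper the strengthening from $h\leq{\cal O}(\eps^{9\widetilde\delta})$ to $h\leq\eps^{10\widetilde\delta}\log(1/\eps)$ is driven not by the elliptic remainder of Proposition~\ref{prop5.1} (which only needs $h\leq{\cal O}(\eps^{4\widetilde\delta})$) but by the parametrix error ${\cal O}(h\eps^{-7\widetilde\delta}/h^2)$ arising in (\ref{eq34.001}) from the $\widetilde\psi_{i,-}$ piece after contour deformation --- this does not affect the validity of your argument, only the attribution.
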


\subsection{The Birkhoff normal form and trace integrals near the tori}

In this subsection, we shall complete the trace analysis near the Diophantine levels by studying the integrals
$$
{\rm tr}\,\frac{1}{2\pi i}\int_{\gamma_j} \left(z-P_{\eps}\right)^{-1}\,\chi_j\,dz,
$$
say, when $j=1$.

\medskip
\noindent
When $z\in \gamma_1$, let us consider the equation
$$
\left(z-P_{\eps}\right)u=v,\quad u\in H(\Lambda,m),
$$
so that
$$
\left(z-P_{\eps}\right)\chi_1 u = \chi_1 v + [\chi_1,P_{\eps}]u.
$$
Applying the operator $U:=U_1$ of Proposition \ref{prop2.1}, we get
$$
\left(z-P^{(N)}_1-R_{N+1,1}\right)U \chi_1  u = U \chi_1 v + T_N u,
$$
with the trace class norm of $T_N$ on $H(\Lambda)$ being ${\cal O}(h^M)$, where $M$ can be taken arbitrarily large, by taking the integer $N$ in
Proposition \ref{prop2.1} large enough. Using the fact that $R_{N+1,1}(x,\xi,\eps;h) = {\cal O}\left((h,\eps,\xi)^{N+1}\right)$ and modifying $T_N$
slightly, we get
$$
\left(z-P^{(N)}_1\right) U \chi_1 u = U\chi_1 v + T_N u.
$$
It is therefore clear, in view of the fact that $\eps \geq h^K$, $K\geq 1$ fixed and since the bound (\ref{eq31.1}) holds, that
\begeq
\label{eq34.11}
\chi_1 \left(z-P_{\eps}\right)^{-1} = V \left(z-P^{(N)}_1\right)^{-1}U \chi_1 + T_{1,N},
\endeq
where $T_{1,N}$ has the same trace class norm bound as $T_N$. Here $V$ is a microlocal inverse of $U$. Therefore,
\begin{multline}
\label{eq34.12}
{\rm tr}\,\frac{1}{2\pi i}\int_{\gamma_1} \left(z-P_{\eps}\right)^{-1}\,\chi_1\,dz \\
= {\rm tr}\,\frac{1}{2\pi i}\int_{\gamma_1} \left(z-P^{(N)}_1\right)^{-1}\,U\chi_1 V \,dz +{\cal O}(h^M).
\end{multline}
Here we may recall, following~\cite{HiSjVu}, that $U \chi_1 V  = \chi(hD_x/\eps^{\widetilde{\delta}})$, where $\chi\in C^{\infty}_0(\real^2)$ is a
standard cut-off to a \neigh{} of $\xi=0$. Let us write in what follows $\chi_{\eps}(\xi) = \chi(\xi/\eps^{\widetilde{\delta}})$.

\medskip
\noindent
Now the eigenvalues of the translation invariant operator $P^{(N)}_1(hD_x,\eps;h)$, acting on
$L^2_{\theta}({\bf T}^2)$ are given by
\begeq
\label{eq34.13}
\mu(k) := P^{(N)}_1 \left(h\left(k-\frac{k_0}{4}\right)-\frac{S}{2\pi}\right),\quad k\in \z^2,
\endeq
and by (\ref{eq34.12}), we conclude that
\begin{multline}
\label{eq34.2}
{\rm tr}\, \frac{1}{2\pi i}\int_{\gamma_1} (z-P_{\eps})^{-1}\chi_{1}\,dz  \\
= \sum_{k\in {\bf Z}^2}\frac{1}{2\pi i}
\int_{\gamma_1} (z-\mu(k))^{-1} \chi_{\eps}\left(h(k-\frac{k_0}{4})-\frac{S}{2\pi}\right)\,dz
+ {\cal O}(h^M).
\end{multline}

\medskip
\noindent
We have
\begeq
\label{eq34.21}
\mu(k) = \widehat{p}\left(h\left(k-\frac{k_0}{4}\right)-\frac{S}{2\pi},\eps\right) + {\cal O}(h),
\endeq
where
$$
\widehat{p}(\xi,\eps) = p(\xi) + i \eps\langle{q}\rangle(\xi) +{\cal O}(\eps^2)
$$
is the leading symbol of $P^{(N)}_1(hD_x,\eps;h)$. We would like to compare the expression in the right hand side of (\ref{eq34.2}) with the integral
\begeq
\label{eq34.3}
\frac{1}{2\pi i}\frac{1}{(2\pi h)^2}\int_{\gamma_1}\,\int\!\!\!\int \left(z-\widehat{p}(\xi,\eps)\right)^{-1}
\chi_{\eps}(\xi)\,dx\,d\xi\,dz,
\endeq
the integration in the $(x,\xi)$ variables being carried out over $T^*{\bf T}^2$.

\medskip
\noindent
Integrating out the $x$-variables in (\ref{eq34.3}), we shall first compare the expressions
$$
\int_{{\bf R}^2}\int_{\gamma_1} \left(z-\widehat{p}(\xi,\eps)\right)^{-1}\, \chi_{\eps}(\xi)\,d\xi\,dz
$$
and
$$
\sum_{k\in {\bf Z}^2} h^2 \int_{\gamma_1}
\left(z-\widehat{p}\left(h\left(k-\frac{k_0}{4}\right)-\frac{S}{2\pi},\eps\right)\right)^{-1}
\chi_{\eps}\left(h\left(k-\frac{k_0}{4}\right)-\frac{S}{2\pi}\right)\,dz.
$$
When $\xi\in \real^2$ is such that
$$
\xi\in h\left(k-\frac{k_0}{4}\right) -\frac{S}{2\pi}+ [0,h)^2,
$$
for some $k\in \z^2$, let us write
$$
[\xi]=h\left(k -\frac{k_0}{4}\right)-\frac{S}{2\pi}.
$$
Let us consider
\begeq
\label{eq35}
\int_{{\bf R}^2}\int_{\gamma_1} \left(\left(z-\widehat{p}(\xi,\eps)\right)^{-1} -
 \left(z-\widehat{p}([\xi],\eps)\right)^{-1}\right)
\chi_{\eps}(\xi)\,d\xi\,dz.
\endeq
Let $a$, $b$ be the endpoints of $\gamma_1$. Then with suitable branches of the logarithm, we have
\begin{multline}
\label{eq35.0}
\int_{\gamma_1} \left(\left(z-\widehat{p}(\xi,\eps)\right)^{-1} -
 \left(z-\widehat{p}([\xi],\eps)\right)^{-1}\right)\,dz \\
= \left(\log \left(b-\widehat{p}(\xi,\eps)\right)-\log \left(b-\widehat{p}([\xi],\eps)\right)\right) \\
- \left(\log \left(a-\widehat{p}(\xi,\eps)\right)-\log \left(a-\widehat{p}([\xi],\eps)\right)\right).
\end{multline}
In general, for $z$, $w\in \comp$, we have
$$
\log z - \log w  = \int_{z}^w \frac{1}{\zeta}\,d\zeta,
$$
where the choice of curve joining $w$ and $z$ depends on the choices of branches of $\log z$, $\log w$. If we have the same branch then
\begeq
\label{eq35.1}
\abs{\log z -\log w}\leq \frac{C_0 \abs{z-w}}{{\rm min}(\abs{z},\abs{w})}.
\endeq
If the branch cut passes between $z$ and $w$, we have to add a constant. In the case of (\ref{eq35.0}), this happens precisely when
$\widehat{p}(\xi,\eps)$ and $\widehat{p}([\xi],\eps)$ are on the opposite sides of $\gamma_1$. Now let us concentrate on one of the terms in
(\ref{eq35.0}), say
\begeq
\label{eq35.2}
\log \left(a-\widehat{p}(\xi,\eps)\right)  - \log \left(a-\widehat{p}([\xi],\eps)\right).
\endeq
If
\begeq
\label{eq35.3}
\abs{\Re a - \Re \widehat{p}(\xi,\eps)}\leq C_0 h,
\endeq
for a suitable fixed constant $C_0>0$, we estimate the two terms separately and get that the contribution to (\ref{eq35}) in this case is
$$
\int_{E(C_0)} \chi_{\eps}(\xi) \log \left(a-\widehat{p}(\xi,\eps)\right)\,d\xi \leq {\cal O}(1) \int_0^{C_1 h}-\log t\, dt = {\cal O}(1)
h \log \frac{1}{h}.
$$
Here $E(C_0)\subset \real^2$ is the set of all $\xi\in \real^2$ such that (\ref{eq35.3}) holds. If we assume that $a$ has been chosen so that for all
$\xi\in \real^2$,
\begeq
\label{eq35.4}
\abs{a-\widehat{p}([\xi],\eps)}\geq \frac{h}{{\cal O}(1)},
\endeq
then we get the same estimate for
$$
\int \chi_{\eps}(\xi) \log \left(a-\widehat{p}([\xi],\eps)\right)\,d\xi.
$$
In the region where $\abs{\Re a - \Re \widehat{p}(\xi,\eps)}\geq C_0 h$, let us first assume that we have the same branches of
$\log (a-\widehat{p}(\xi,\eps))$ and $\log(a-\widehat{p}([\xi],\eps))$. Then by (\ref{eq35.1}),
$$
\log (a-\widehat{p}(\xi,\eps)) - \log(a-\widehat{p}([\xi],\eps)) = {\cal O}(1) \frac{h}{\Re a- \Re \widehat{p}(\xi,\eps)},
$$
and the corresponding contribution to (\ref{eq35}) is
$$
{\cal O}(h) \int_h^1 \frac{1}{t}\,dt  = {\cal O}\left(h \log \frac{1}{h}\right).
$$
It remains to estimate the integral of the extra contributions $\pm 2\pi i$, to $\log (a-\widehat{p}(\xi,\eps)) - \log(a-\widehat{p}([\xi],\eps))$
from points $\xi$ for which $\widehat{p}(\xi,\eps)$ and $\widehat{p}([\xi],\eps)$ are on the opposite sides of $\gamma_1$. But the Lebesgue measure of the
set of such points is ${\cal O}(h)$, so the corresponding contribution to the integral is ${\cal O}(h)$.

\medskip
\noindent
Summing up our estimates and computations, we see that the expression (\ref{eq35}) is ${\cal O}(h\log \frac{1}{h})$. Arguing similarly and using
(\ref{eq34.21}), we obtain that
\begeq
\label{eq35.5}
\int_{{\bf R}^2}\int_{\gamma_1} \left(\left(z-\widehat{p}([\xi],\eps)\right)^{-1} -
 \left(z-\mu(k)\right)^{-1}\right) \chi_{\eps}([\xi])\,d\xi\,dz = {\cal O}(h\log \frac{1}{h}).
\endeq
Finally, we find that also,
\begeq
\label{eq35.6}
\int_{{\bf R}^2} \int_{\gamma_1} \left(z-\widehat{p}([\xi],\eps)\right)^{-1} \left(\chi_{\eps}(\xi)-\chi_{\eps}([\xi])\right)\,d\xi\,dz =
{\cal O}(h\log \frac{1}{h}).
\endeq

\bigskip
\noindent
We summarize the result of this subsection in the following proposition. Here we also use that the
integral over $T^*{\bf T}^2$ in (\ref{eq34.3}) can be transformed into the corresponding integral over
$\Lambda$ by means of the canonical transformation associated to the operator $U$.
\begin{prop}
\label{prop5.5}
Assume that $E_2<0<E_4$, $\abs{E_j}\sim \eps^{\widetilde{\delta}}$, $j=2,4$, where $0<\widetilde{\delta}<1$ is so small that
$h\leq \eps^{10\widetilde{\delta}}\log(1/\eps)$. Let $\gamma_j$, $j=1,3$ be the horizontal segment given by $E_2 \leq \Re z \leq E_4$, $\Im z =\eps F_j$.
Let finally $0\leq \chi_j\in C^{\infty}_0(\Lambda)$ be a cut-off function to an $\eps^{\widetilde{\delta}}$-\neigh{} of $\widehat{\Lambda}_j$, $j=1,3$,
enjoying the commutator property {\rm (\ref{eq32.001})}. We have
\begin{multline}
{\rm tr}\,\frac{1}{2\pi i}\int_{\gamma_j} \left(z-P_{\eps}\right)^{-1}\,\chi_j\,dz \\ \nonumber
= \frac{1}{2\pi i}\frac{1}{(2\pi h)^2} \int_{\gamma_1} \int\!\!\!\int  \frac{1}{z-p_{\eps}(\rho)}\chi_j(\rho)\,\mu(d\rho)\,dz
+ {\cal O}\left(\frac{1}{h}\right) \log \frac{1}{\eps},\quad j=1,3.
\end{multline}
\end{prop}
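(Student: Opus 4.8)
The plan is to conjugate the resolvent by the Fourier integral operator $U=U_1$ furnished by Proposition~\ref{prop2.1}, thereby replacing $P_{\eps}$ microlocally near $\widehat{\Lambda}_1$ by the translation--invariant Birkhoff normal form $P_1^{(N)}(hD_x,\eps;h)$, to read off its spectrum explicitly, and finally to compare the resulting lattice sum with the phase space integral appearing in the statement. I would treat $j=1$; the case $j=3$ is identical. First, starting from $(z-P_{\eps})u=v$ and commuting $\chi_1$ past $P_{\eps}$ --- the commutator being ${\cal O}(h^M)$ in trace class norm by (\ref{eq32.001}) --- then applying $U$ and absorbing $R_{N+1,1}={\cal O}((h,\eps,\xi)^{N+1})$ into the trace class error (legitimate since $\eps\ge h^K$ and, by (\ref{eq31.1}), the resolvent bound (\ref{eq27.1}) is available along $\gamma_1$), one arrives at (\ref{eq34.11}) and hence at (\ref{eq34.12}). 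Since $U\chi_1 V=\chi_{\eps}(hD_x)$ with $\chi_{\eps}(\xi)=\chi(\xi/\eps^{\widetilde{\delta}})$, this reduces the trace integral to ${\rm tr}\,\frac{1}{2\pi i}\int_{\gamma_1}(z-P_1^{(N)})^{-1}\chi_{\eps}(hD_x)\,dz$ modulo ${\cal O}(h^M)$.

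Next, because the full symbol of $P_1^{(N)}(hD_x,\eps;h)$ does not depend on $x$, this operator is diagonalised by the Floquet Fourier modes on $L^2_{\theta}({\bf T}^2)$, with eigenvalues $\mu(k)$, $k\in{\bf Z}^2$, given by (\ref{eq34.13}); only ${\cal O}(\eps^{2\widetilde{\delta}}/h^2)$ of them meet $\supp \chi_{\eps}$, so the trace is the convergent lattice sum (\ref{eq34.2}), still up to ${\cal O}(h^M)$. That sum, once the cell volume $h^2$ is extracted, is a Riemann sum for the integral (\ref{eq34.3}), and I would estimate the difference in the three stages indicated: replacing $\widehat{p}(\xi,\eps)$ by $\widehat{p}([\xi],\eps)$, cf.\ (\ref{eq35}); replacing $\widehat{p}([\xi],\eps)$ by $\mu(k)$, cf.\ (\ref{eq35.5}), using (\ref{eq34.21}); and replacing $\chi_{\eps}(\xi)$ by $\chi_{\eps}([\xi])$, cf.\ (\ref{eq35.6}). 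Each stage costs ${\cal O}(h\log\frac{1}{h})$: writing the $z$-integral over $\gamma_1$ as a difference of logarithms at the endpoints $a,b$, one uses the elementary bound (\ref{eq35.1}) in the regime $|\Re z-\Re\widehat{p}(\xi,\eps)|\ge C_0 h$, yielding ${\cal O}(h)\int_h^1 t^{-1}\,dt$; one estimates the two logarithms separately on the complementary strip, yielding ${\cal O}(1)\int_0^{C_1 h}(-\log t)\,dt={\cal O}(h\log\frac{1}{h})$; and one notes that the set of $\xi$ for which $\widehat{p}(\xi,\eps)$ and $\widehat{p}([\xi],\eps)$ straddle $\gamma_1$ has measure ${\cal O}(h)$, so the $\pm 2\pi i$ branch jumps contribute only ${\cal O}(h)$.

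Dividing by $(2\pi h)^2$ turns the accumulated error into ${\cal O}(h^{-1}\log\frac{1}{h})$, and since $h^K\le\eps\le h^{\delta}$ gives $\log\frac{1}{h}\asymp\log\frac{1}{\eps}$, this is ${\cal O}(h^{-1}\log\frac{1}{\eps})$ as claimed. It then remains to transport the integral (\ref{eq34.3}) from $T^*{\bf T}^2$ to $\Lambda$ by the canonical transformation attached to $U$: it carries $dx\,d\xi$ to the symplectic volume $\mu(d\rho)$, sends $\chi_{\eps}$ to $\chi_1$ (this is exactly the identity $U\chi_1 V=\chi_{\eps}(hD_x)$ read at the symbol level), and sends $\widehat{p}(\xi,\eps)$ to the leading symbol $p_{\eps}|_{\Lambda}$, which near $\widehat{\Lambda}_1$ is already in averaged form by the construction of $\Lambda$ recalled in Section~\ref{review}, hence coincides with $\widehat{p}$ up to ${\cal O}(\xi^N)$, negligible on $\supp \chi_{\eps}$. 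This yields precisely the right hand side of the proposition.

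The main obstacle is the Riemann--sum comparison of the preceding paragraph, and within it the need to fix the vertical sides of $R$ (equivalently the endpoints $a,b$ of $\gamma_1$) so that $|a-\widehat{p}([\xi],\eps)|\ge h/{\cal O}(1)$ for every relevant $\xi$, cf.\ (\ref{eq35.4}) --- a pigeonhole argument over the finite set of lattice values --- together with the need to control $\log(a-\widehat{p}(\xi,\eps))-\log(a-\widehat{p}([\xi],\eps))$ uniformly as $\widehat{p}(\xi,\eps)$ sweeps across $\gamma_1$, which is exactly where the pointwise estimate (\ref{eq35.1}) degenerates and the ${\cal O}(h)$--measure estimate for the set of bad $\xi$ becomes indispensable.
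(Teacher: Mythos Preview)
Your proposal is correct and follows the paper's own argument essentially step for step: conjugation to the Birkhoff normal form via (\ref{eq34.11})--(\ref{eq34.12}), reduction to the lattice sum (\ref{eq34.2}), and the three-stage Riemann--sum comparison (\ref{eq35}), (\ref{eq35.5}), (\ref{eq35.6}) with the logarithmic endpoint analysis (\ref{eq35.0})--(\ref{eq35.4}). The only minor addition you make explicit is the pigeonhole justification of (\ref{eq35.4}), which the paper simply assumes.
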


\subsection{End of the proof}
Combining Propositions \ref{prop4.2}, \ref{prop5.4}, and \ref{prop5.5}, we obtain that the number of eigenvalues of $P_{\eps}$ in the
rectangle $R$ in (\ref{eq25.2}) is equal to
\begin{multline}
\label{eq35.7}
\Re \frac{1}{2\pi i} \frac{1}{(2\pi h)^2}\int_{\gamma} \int\!\!\!\int \frac{1}{z-p_{\eps}(\rho)}\, \mu(d\rho)\,dz +
\frac{1}{h^2} {\cal O}\left(\eps^{3\widetilde{\delta}} \log{\frac{1}{\eps}}\right) \\
= \frac{1}{(2\pi h)^2}\int\!\!\!\int_{\Omega(\eps,R)} \mu(d\rho)
+ \frac{1}{h^2}{\cal O}\left(\eps^{3\widetilde{\delta}} \log{\frac{1}{\eps}}\right).
\end{multline}
Here $\Omega(\eps,R) = p_{\eps}^{-1}(R)\subset \Lambda$ and $p_{\eps}\in C^{\infty}(\Lambda)$ is the leading symbol of $P_{\eps}$,
acting on $H(\Lambda)$.

\medskip
\noindent
Let us now recall the $C^{\infty}$ canonical transformation
$$
\kappa: {\rm neigh}(p^{-1}(0),T^*M)\rightarrow {\rm neigh}(p^{-1}(0),\Lambda),
$$
introduced in (\ref{eq27.12.2}), so that $\kappa$ is ${\cal O}(\eps)$--close to the identity in the $C^{\infty}$--sense, for each fixed $T\geq T_0$,
$T_0>0$ large enough. An application of (\ref{eq27.12.3}) shows that
\begeq
\label{eq35.8}
p_{\eps}(\kappa(\rho)) = p(\rho) + i\eps \left(q-H_p G\right)(\rho)+{\cal O}(\eps^2),\quad \rho\in T^*M.
\endeq
In the compact case, we obtain the same expression for the transformed symbol.

\medskip
\noindent
It follows from (\ref{eq35.8}) and the properties of the function $G$, recalled in Section \ref{review}, that the set $\Omega(\eps,R)\subset \Lambda$
is ${\cal O}(\eps)$--close to the set
$$
\Omega([E_2,E_4])  := \bigcup_{E_2 \leq E \leq E_4} \Omega(E)\subset T^*M,
$$
introduced in (\ref{eq26}).  Therefore,
\begeq
\label{eq35.9}
\int\!\!\!\int_{\Omega(\eps,R)} \mu(d\rho) = \int\!\!\!\int_{\Omega([E_2,E_4])} dx\,d\xi + {\cal O}(\eps).
\endeq
Here $T>0$ is large enough fixed. Combining (\ref{eq35.7}) and (\ref{eq35.9}), we complete the proof of Theorem \ref{theo_main}.


\section{Weyl asymptotics in the periodic case}
\label{periodic}
\setcounter{equation}{0}
In this section, we shall explain how the results and methods of the work~\cite{HiSj04}, combined with the methods
of the present work, can be used to obtain an analog of Theorem \ref{theo_main} in the case when instead of the complete integrability assumptions, we assume that Hamilton flow of $p$ is periodic. It turns out that the analysis of the periodic case will proceed in full analogy with the previously analyzed completely integrable case. The following discussion will therefore be somewhat brief.

\medskip
\noindent
In order to fix the ideas, throughout this section, we shall consider the case when $M=\real^2$. Let $P_{\eps}$ be an operator satisfying all the assumptions made in
subsection 2.1, and in particular, (\ref{eq9.2}). Assume that for $E\in {\rm neigh}(0,\real)$, the following condition holds,
\begin{eqnarray}
\label{eq36}
\hbox{The }H_p\hbox{-flow is periodic on }p^{-1}(E)\cap \real^4 \hbox{
with}\\ \hbox{period }T(E)>0 \hbox{ depending analytically on }E.\nonumber
\end{eqnarray}

\medskip
\noindent
As in (\ref{eq15}), we set
\begeq
\label{eq37}
\langle{q}\rangle  = \frac{1}{T(E)}\int_{0}^{T(E)} q\circ \exp(tH_p)\,dt\quad \wrtext{on}\,\, p^{-1}(E)\cap \real^4,
\endeq
and notice that the functions $p$ and $\langle{q}\rangle$ are in involution, so that $H_p \langle{q}\rangle=0$. Similarly to (\ref{eq18}), it
is established in~\cite{HiSj04} that,
\begeq
\label{eq38}
\frac{1}{\eps}\Im \left({\rm Spec}(P_{\eps})\cap \{z; \abs{\Re z}\leq \delta\}\right)
\subset \left[\min_{p^{-1}(0)\cap {\bf R}^4} \langle{q}\rangle-o(1),
\max_{p^{-1}(0)\cap {\bf R}^4}\langle{q}\rangle+o(1)\right],
\endeq
as $\eps$, $h$, $\delta\rightarrow 0$.

\medskip
\noindent
Let
$$
F_j \in [\min_{p^{-1}(0)\cap {\bf R}^4} \langle{q}\rangle, \max_{p^{-1}(0)\cap {\bf R}^4} \langle{q}\rangle],\quad j=1,3,\quad F_3 < F_1,
$$
and let us introduce the associated level sets
\begeq
\label{eq39}
\Lambda_{0,F_j} = \{\rho\in \real^4;\, p(\rho)=0,\,\,\langle{q}\rangle(\rho) = F_j\},\quad j=1,3.
\endeq
As in~\cite{HiSj04}, we shall work under the general assumption that for $j=1,3$,
\begeq
\label{eq40}
T(0)\hbox{ is the minimal period of every }H_p\hbox{-trajectory in
}\Lambda _{0,F_j}.
\endeq
We shall furthermore assume that
\begeq
\label{eq41}
dp,\, d \langle q\rangle \hbox{ are linearly independent at every
point of }\Lambda _{0,F_j},\quad j=1,3.
\endeq
It follows that each connected component of the level set $\Lambda_{0,F_j}$ is a two-dimensional Lagrangian torus. For simplicity, we shall assume that
the sets $\Lambda_{0,F_j}$ are both connected, $j=1,3$. We can then make a real analytic canonical transformation
$$
\kappa_j: {\rm neigh}(\Lambda_{0,F_j},\real^4)\rightarrow {\rm neigh}(\xi=0,T^*{\bf T}^2),
$$
given by the action-angle coordinates near $\Lambda_{0,F_j}$, so that when expressed in terms of the coordinates $x$ and $\xi$ on the torus side, we have
$p\circ \kappa_j^{-1} = p_j(\xi_1)$, $\langle{q\rangle}\circ \kappa_j^{-1} =  \langle{q_j}\rangle(\xi)$.

\medskip
\noindent
Following the analysis carried out in Section 4 of~\cite{HiSj04}, we shall now state the following result, which can be viewed as an analog of
Proposition \ref{prop2.1} in the present periodic situation.
\begin{prop}
\label{prop6.1}
Let us keep all the general assumptions of subsection {\rm 2.1} and make furthermore the assumptions
{\rm (\ref{eq36})}, {\rm (\ref{eq40})}, and {\rm (\ref{eq41})}. Assume that $\eps = {\cal O}(h^{\delta})$, $0<\delta\leq 1$, is such $h/\eps \leq \delta_0$,
for some $0<\delta_0$ sufficiently small. There exists a smooth IR-manifold $\Lambda \subset \comp^4$ and smooth Lagrangian tori $\widehat{\Lambda}_1$ and
$\widehat{\Lambda}_3\subset \Lambda$, such that when $\rho\in \Lambda$ is away from a small but fixed \neigh{} of
$\widehat{\Lambda}_1\cup \widehat{\Lambda}_3$ in $\Lambda$, we have
\begeq
\label{eq42}
\abs{\Re P_{\eps}(\rho)}\geq \frac{1}{{\cal O}(1)}
\endeq
or
\begeq
\label{eq43}
\abs{\Im P_{\eps}(\rho)-\eps F_1}\geq \frac{\eps}{{\cal O}(1)}\quad \wrtext{and}\quad
\abs{\Im P_{\eps}(\rho)-\eps F_3}\geq \frac{\eps}{{\cal O}(1)}.
\endeq
The manifold $\Lambda$ is an ${\cal O}(\eps+\frac{h}{\eps})$-perturbation of $\real^4$ in the $C^{\infty}$--sense, and it agrees with $\real^4$ outside
of a \neigh{} of $p^{-1}(0)\cap \real^4$. We have
$$
P_{\eps}  = {\cal O}(1): H(\Lambda,m)\rightarrow H(\Lambda).
$$
When $j=1,3$, there exists an elliptic $h$--Fourier integral operator
$$
U_j = {\cal O}(1): H(\Lambda)\rightarrow L^2_{\theta}({\bf T}^2),
$$
such that microlocally near $\widehat{\Lambda}_j$, we have
$$
U_j P_{\eps}  = \widehat{P}_j U_j.
$$
Here $\widehat{P}_j=\widehat{P}_j(hD_x,\eps,\frac{h}{\eps};h)$ is an operator acting on $L^2_{\theta}({\bf T}^2)$ with the symbol
$$
\widehat{P}_j\left(\xi,\eps,\frac{h}{\eps};h\right) \sim p_j(\xi_1) + \eps \sum_{k=0}^{\infty} h^k r_{j,k} \left(\xi,\eps,\frac{h}{\eps}\right),\quad
\abs{\xi}\leq \frac{1}{{\cal O}(1)},
$$
where
$$
r_{j,0}(\xi) = i\langle{q_j}\rangle(\xi) + {\cal O}\left(\eps + \frac{h}{\eps}\right),
$$
and
$$
r_{j,k}(\xi) = {\cal O}\left(\eps+\frac{h}{\eps}\right),\quad k\geq 1.
$$
\end{prop}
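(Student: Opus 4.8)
The plan is to follow Section~4 of \cite{HiSj04}, proceeding in close parallel with the proof of Proposition~\ref{prop2.1} recalled above; the main simplification compared with the completely integrable case is that the $H_{p}$-flow now has a \emph{bounded} period $T(E)$, so that the averaging over the flow can be carried out once and for all, without the large averaging times and the attendant logarithmic losses. First I would construct the global weight $G$ and the associated IR-manifold $\Lambda$. Write $P_{\eps}=P+i\eps Q+{\cal O}(\eps^{2})$ with $P=P_{\eps=0}$ selfadjoint and with leading symbols $p$ (real) and $q$. Since the $H_{p}$-flow is $T(E)$-periodic on $p^{-1}(E)$ for $E\in{\rm neigh}(0,\real)$, the function $q-\langle q\rangle$ has vanishing mean over every closed $H_{p}$-orbit, so the cohomological equation $H_{p}G_{0}=q-\langle q\rangle$ admits an analytic solution $G_{0}$ near $p^{-1}(0)\cap\real^{4}$. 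Near each torus $\Lambda_{0,F_{j}}$ --- where, by (\ref{eq41}) and by the periodicity (which makes $p$ a function $p_{j}(\xi_{1})$ of the single action of the periodic orbit, with the minimal period equal to $T(0)$ by (\ref{eq40})), the action-angle coordinates $\kappa_{j}$ of the integrable system $(p,\langle q\rangle)$ are available --- I would instead solve $H_{p}\widetilde{G}_{j}=q_{j}-r_{j}$ with $r_{j}=r_{j}(\xi)$ an $x$-independent symbol agreeing with $\langle q_{j}\rangle(\xi)$ to arbitrarily high order at $\xi=0$. Gluing $G_{0}$ and the $\widetilde{G}_{j}\circ\kappa_{j}$ by a partition of unity produces $G\in C^{\infty}_{0}$, and, exactly as in \cite{Sj82},\cite{MeSj1},\cite{HiSjVu} and in Section~\ref{review}, one lets $\Lambda$ be the IR-manifold obtained from $\real^{4}$ by the complex deformation generated by $\eps H_{G}$, together with the additional $h$-dependent deformation forced by the normal form below, glued back to $\real^{4}$ away from $p^{-1}(0)$, and one sets $\widehat{\Lambda}_{j}:=\kappa_{j}^{-1}(\{\xi=0\})$ transported into $\Lambda$. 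This makes $\Lambda$ an ${\cal O}(\eps+h/\eps)$-perturbation of $\real^{4}$ in $C^{\infty}$, and the weighted symbol calculus on $H(\Lambda)$ (Section~\ref{review} and the appendix of \cite{HiSj04}) gives $P_{\eps}={\cal O}(1):H(\Lambda,m)\to H(\Lambda)$.

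Next come the estimates (\ref{eq42})--(\ref{eq43}). By the choice of $G$, the leading symbol of $P_{\eps}$ acting on $H(\Lambda)$, pulled back by the canonical transformation associated with $\Lambda$, equals $p+i\eps(q-H_{p}G)+{\cal O}(\eps^{2})=p+i\eps\langle q\rangle+{\cal O}(\eps^{2})$ away from the tori and $p_{j}(\xi_{1})+i\eps r_{j}(\xi)+{\cal O}(\eps^{2})$ near them; in particular $\Re P_{\eps}=p+{\cal O}(\eps^{2})$ and $\eps^{-1}\Im P_{\eps}=\langle q\rangle+{\cal O}(\eps+h/\eps)$ on the bounded part of $\Lambda$, while near infinity $\Lambda$ agrees with $\real^{4}$ and $\Re P_{\eps}\geq m/{\cal O}(1)$ by ellipticity. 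For $\rho\in\Lambda$ outside a fixed neighbourhood of $\widehat{\Lambda}_{1}\cup\widehat{\Lambda}_{3}$, the cleanness of $\{p=0\}\cap\{\langle q\rangle=F_{j}\}$ from (\ref{eq41}), together with the assumed connectedness of $\Lambda_{0,F_{j}}$, gives $|p(\rho)|+|\langle q\rangle(\rho)-F_{j}|\geq 1/{\cal O}(1)$ for $j=1,3$: if $|p(\rho)|\geq 1/{\cal O}(1)$ we obtain (\ref{eq42}), and otherwise $|\langle q\rangle(\rho)-F_{j}|\geq 1/{\cal O}(1)$ for both $j$, which, since $h/\eps$ is small, yields (\ref{eq43}).

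The main step, and the principal obstacle, is the quantum Birkhoff normal form near $\widehat{\Lambda}_{j}$. Working microlocally near $\xi=0$ in $T^{*}{\bf T}^{2}$ via $\kappa_{j}$, one first normalizes the selfadjoint part: using that $p=p_{j}(\xi_{1})$ depends only on the action of the periodic orbit, together with the well-known quantum normal form for operators with periodic bicharacteristic flow (Section~4 of \cite{HiSj04}; see also \cite{Weinstein77},\cite{Colin}), an elliptic $h$--Fourier integral operator conjugates $P$ to an $x$-independent operator $f(hD_{x};h)\sim p_{j}(\xi_{1})+h(\cdots)$ modulo ${\cal O}(h^{\infty})$. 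The $\eps^{1}h^{0}$ term of the perturbation is then already $x$-independent, equal to $ir_{j}(\xi)$, since the classical averaging has been built into $\Lambda$ through $G$; the residual $x$-dependence in the $\eps^{a}h^{b}$ terms with $a+b\geq 2$ is removed order by order by conjugating with bounded elliptic operators, the relevant cohomological equations being solvable because the $f(hD_{x})$-flow is, after normalization, the $2\pi$-periodic $x_{1}$-translation and because, by Arnold--Liouville for $(p,\langle q\rangle)$, the resulting obstructions are functions of $\xi$ alone; the contributions that cannot be removed are absorbed into the ${\cal O}(h^{\infty})$ error and into the ${\cal O}(\eps+h/\eps)$ corrections to the $r_{j,k}$. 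Here the hypothesis $h/\eps\leq\delta_{0}$ is essential: the conjugation realizing the \emph{classical} averaging is of size $\sim\eps/h$ in operator norm and so cannot be carried out unitarily on $L^{2}$ --- this is precisely why it must be absorbed into the weight defining $H(\Lambda)$ --- and the price is errors of relative size ${\cal O}(\eps+h/\eps)$, giving exactly $r_{j,0}=i\langle q_{j}\rangle(\xi)+{\cal O}(\eps+h/\eps)$ and $r_{j,k}={\cal O}(\eps+h/\eps)$ for $k\geq 1$. Assembling the Fourier integral operators produced by the two normalizations into $U_{j}$ and reading off the symbol of $\widehat{P}_{j}$ in $U_{j}P_{\eps}=\widehat{P}_{j}U_{j}$ completes the proof.
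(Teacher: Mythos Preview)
Your proposal is correct and follows the same route as the paper: the paper gives no independent proof of Proposition~\ref{prop6.1} but simply states it as a consequence of the analysis carried out in Section~4 of~\cite{HiSj04}, viewing it as the periodic analogue of Proposition~\ref{prop2.1}. Your sketch fleshes out precisely that reference---global weight $G$ solving $H_pG=q-\langle q\rangle$ (solvable in one step thanks to the bounded period), action-angle coordinates near the tori $\Lambda_{0,F_j}$ from the integrable system $(p,\langle q\rangle)$, construction of the IR-manifold $\Lambda$, and the two-stage quantum normal form (periodic reduction of the selfadjoint part to a function of $\xi_1$, then averaging of the $\eps$-perturbation)---so in substance you and the paper agree, with you supplying more detail than the paper does. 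One minor point: your heuristic for the ${\cal O}(h/\eps)$ corrections (``the conjugation realizing the classical averaging is of size $\sim\eps/h$ in operator norm'') is not quite the mechanism in~\cite{HiSj04}; there the $h/\eps$ terms arise because the subprincipal contributions of the \emph{unperturbed} operator, of order $h$, enter the second normal-form reduction on the same footing as the $\eps$-perturbation, so their relative size is $h/\eps$. This does not affect the validity of your outline.
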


\medskip
\noindent
From~\cite{HiSj04}, we also infer that the natural analog of Proposition \ref{prop2.2} is valid, when the spectral parameter $z$ belongs to the
rectangle
$$
\left[-\frac{1}{C},\frac{1}{C}\right] + i\eps \left[F_j -\frac{1}{C},F_j + \frac{1}{C}\right],\quad j=1,3,
$$
for a sufficiently large constant $C>0$, and is such that ${\rm dist}(z,{\rm Spec}(P_{\eps}))\geq \eps h^{N_0}$, for some $N_0\geq 1$ fixed.

\medskip
\noindent
Let
\begeq
\label{eq44}
R  = (E_2,E_4) + i\eps (F_3,F_1),
\endeq
where $E_2<0<E_4$ are independent of $h$, with $\abs{E_j}<1/C$, $j=2,4$, for $C>0$ sufficiently large. We decompose $\partial R$ as in (\ref{eq26.1}),
and notice that the analysis of Section \ref{vertical} applies to the traces of the integrals
$$
\frac{1}{2\pi i}\int_{\gamma_j} (z-P_{\eps})^{-1}\,dz,\quad j=2,4,
$$
as it stands, so that the statement of Proposition \ref{prop4.2} continues to hold in the present situation. For future reference, we state it as the
following result. In its formulation, the notation $\mu(d\rho)$ stands for the symplectic volume element on $\Lambda$, and $p_{\eps}$ is the leading
symbol of $P_{\eps}$, viewed as an unbounded operator on $H(\Lambda)$.

\begin{prop}
\label{prop6.2}
Let $E_2 < 0 < E_4$ be independent of $h$ and such that $\abs{E_j} < 1/C$, for $C>0$ sufficiently large, $j=1,2$. Assume that
$ \eps ={\cal O}(h^{\delta})$, $0<\delta\leq 1$, is such that $h/\eps \leq
\delta_0$, for some $\delta_0>0$ small enough. Assume also that $0<\widetilde{\delta}<1$ is such that $h\leq \eps^{9\widetilde{\delta}}$.
When $\gamma_j$ is the vertical segment given by $\Re z = E_j$, $\eps F_3 \leq \Im z \leq \eps F_1$, we have, for $j=2,4$,
\begin{multline*}
\Re {\rm tr}\, \frac{1}{2\pi i}\int_{\gamma_j} \left(z-P_{\eps}\right)^{-1}\,dz  \\ =
\Re \frac{1}{2\pi i} \frac{1}{(2\pi h)^2}\int_{\gamma_j} \int\!\!\!\int \frac{1}{z-p_{\eps}(\rho)}\, \mu(d\rho)\,dz +
\frac{1}{h^2} {\cal O}\left(\eps^{3\widetilde{\delta}} \log{\frac{1}{\eps}}\right).
\end{multline*}
\end{prop}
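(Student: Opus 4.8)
The plan is to observe that the entire argument of Section~\ref{vertical} is of a general nature and does not use the complete integrability of the $H_p$--flow; its only structural inputs are the weighted space $H(\Lambda)$, on which $P_\eps$ acts with leading symbol $p_\eps|_\Lambda$ and with $P_{\eps=0}$ becoming selfadjoint, the fact that $\Lambda$ is a small $C^\infty$--perturbation of $\real^4$ coinciding with it near infinity and that $H(\Lambda,m)\hookrightarrow H(\Lambda)$ is of trace class, and finally a resolvent estimate of the form $\norm{(z-P_\eps)^{-1}}\leq\mathcal{O}(1)/(\eps h^{N_0})$ away from $\Spec(P_\eps)$ in the relevant spectral window. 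In the periodic situation all of these are available: $H(\Lambda)$ and the properties of $\Lambda$ (now an $\mathcal{O}(\eps+h/\eps)$--perturbation of $\real^4$) are provided by Proposition~\ref{prop6.1}, the trace class inclusion follows from~(\ref{eq9.2}) as before, and the resolvent estimate is the periodic analog of Proposition~\ref{prop2.2} recalled above from~\cite{HiSj04}, valid in $[-1/C,1/C]+i\eps[F_j-1/C,F_j+1/C]$, $j=1,3$. It therefore suffices to rerun, word for word, the three main steps of Section~\ref{vertical}.

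First I would reintroduce, exactly as in~(\ref{eq28}), the pseudodifferential trace class perturbation $\widetilde P=P+\eps^{3\widetilde\delta}K$, where $K$ is the quantization on the periodic $\Lambda$ of $i\chi((p(\rho)-E_2)/\eps^{3\widetilde\delta})$, and likewise $\widetilde P_\eps=P_\eps+\eps^{3\widetilde\delta}K$; the quantization is performed through the unitary Fourier integral operator of Proposition~\ref{prop6.1} and the Weyl calculus on $\real^2$, and the operator and trace class norm bounds~(\ref{eq28.1}) are those of~\cite{DiSj}. The hypothesis $h\leq\eps^{9\widetilde\delta}$ implies $\eps^{3\widetilde\delta}\geq h^{1/2-\eta}$ for $\eta>0$ small, so the Fa\`a di Bruno estimates behind~(\ref{eq28.3})--(\ref{eq28.5}) go through verbatim and produce a crude parametrix $E(z)$ for $z-\widetilde P_\eps$ along $\gamma_j$, $j=2,4$, with $E(z)=\mathcal{O}(\eps^{-3\widetilde\delta})\colon H(\Lambda)\to H(\Lambda,m)$, with trace class norm $\mathcal{O}(h^{-2}\log(1/\eps))$, and with $(z-\widetilde P_\eps)E(z)=1+R$, $R=\mathcal{O}(h\eps^{-6\widetilde\delta})$. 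This yields~(\ref{eq28.13}) and the trace formula ${\rm tr}\,E(z)=(2\pi h)^{-2}\int\!\!\!\int(z-\widetilde p_\eps(\rho))^{-1}\mu(d\rho)$.

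Next I would carry over the comparison of ${\rm tr}\,\frac{1}{2\pi i}\int_{\gamma_j}(z-P_\eps)^{-1}\,dz$ with ${\rm tr}\,\frac{1}{2\pi i}\int_{\gamma_j}(z-\widetilde P_\eps)^{-1}\,dz$. The resolvent identity and cyclicity of the trace reduce the real part of the difference to $\frac{1}{2\pi}\mathrm{var\,arg}_{\gamma_j}D(z)$, with $D(z)=\det(1+\widetilde K(z))$ as in~(\ref{eq28.135}) and $\widetilde K(z)=\eps^{3\widetilde\delta}K(z-\widetilde P_\eps)^{-1}$ satisfying $\norm{\widetilde K(z)}_{{\rm tr}}\leq\mathcal{O}(\eps^{3\widetilde\delta}/h^2)$ in a box $R_d$ of the form~(\ref{eq29}); this gives the upper bound $|D(z)|\leq\exp(\mathcal{O}(\eps^{3\widetilde\delta}/h^2))$. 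For the lower bound $|D(z_0)|\geq\exp(-\mathcal{O}(\eps^{3\widetilde\delta}/h^2))$ at a point $z_0\in R_d$ with $\Im z_0<0$, $|\Im z_0|\gtrsim\eps^{3\widetilde\delta}$, one uses that $P_\eps=P+\mathcal{O}(\eps)$ on $H(\Lambda)$ with $P$ selfadjoint, so $\Im(P_\eps u,u)=\mathcal{O}(\eps)\norm{u}^2$ and hence $(z_0-P_\eps)^{-1}=\mathcal{O}(1/|\Im z_0|)=\mathcal{O}(\eps^{-3\widetilde\delta})$, exactly as around~(\ref{eq29.2}). Jensen's formula then bounds the number of eigenvalues in $R_d$ by $\mathcal{O}(\eps^{3\widetilde\delta}/h^2)$, and the factorization $D(z)=G(z)\prod_j(z-z_j)$ of~(\ref{eq29.4}), controlled by Cartan's lemma, the maximum principle and Harnack's inequality, gives $\mathrm{var\,arg}_{\gamma_j}D(z)=\mathcal{O}(\eps^{3\widetilde\delta}/h^2)$, which is the periodic analog of Proposition~\ref{prop4.1}.

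Finally, as in the passage around~(\ref{eq31}), replacing $\widetilde p_\eps$ by $p_\eps$ inside $\frac{1}{2\pi i}\int_{\gamma_j}\int\!\!\!\int(z-\widetilde p_\eps(\rho))^{-1}\mu(d\rho)\,dz$ costs only $\mathcal{O}(1)\int_0^{\eps^{3\widetilde\delta}}(-\log t)\,dt=\mathcal{O}(\eps^{3\widetilde\delta}\log(1/\eps))$, the length of $\gamma_j$ being $\mathcal{O}(\eps)$; collecting everything as around~(\ref{eq30.101}), and noting that $h\leq\eps^{9\widetilde\delta}$ is stronger than~(\ref{eq30.1011}) for $\eps$ small, one arrives at Proposition~\ref{prop6.2} with total error $\frac{1}{h^2}\mathcal{O}(\eps^{3\widetilde\delta}\log(1/\eps))$. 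I do not expect any real obstacle here: the gap-creating perturbation $K$ and the complex-analytic determinant argument depend only on the selfadjoint part $p$ of the symbol and on the width $\eps^{3\widetilde\delta}$ of the induced gap in $\Re z$, not on the foliation structure. The only step deserving attention is to check that the quantization machinery, the trace formula and the resolvent estimate hold on the periodic IR-manifold $\Lambda$ uniformly in the stated spectral window, which is exactly what Proposition~\ref{prop6.1} and the cited results of~\cite{HiSj04} provide.
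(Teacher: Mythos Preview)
Your proposal is correct and follows exactly the paper's own approach: the paper simply remarks that the analysis of Section~\ref{vertical} applies to the periodic setting ``as it stands'' and then records Proposition~\ref{prop4.2} in this context as Proposition~\ref{prop6.2}. Your write-up is a faithful and somewhat more detailed verification that each of the three steps (trace class perturbation and parametrix, the determinant/argument-variation estimate, and the replacement of $\widetilde p_\eps$ by $p_\eps$) indeed transfers unchanged once Proposition~\ref{prop6.1} and the resolvent bound from~\cite{HiSj04} are in place.
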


\bigskip
\noindent
When analyzing the trace integral
$$
{\rm tr}\, \frac{1}{2\pi i}\int_{\gamma_j} (z-P_{\eps})^{-1}\,dz,\quad j=1,3,
$$
say, with $j=1$, we use Proposition \ref{prop6.1} to introduce the following smooth partition of unity on the manifold $\Lambda$, similar to
(\ref{eq32}),
\begeq
\label{eq45}
1 = \chi_1 + \chi_3 + \psi_{r,+} + \psi_{r,-} + \psi_{i,-} + \psi_{i,0} + \psi_{i,+}.
\endeq
Here $0\leq \chi_j \in C^{\infty}_0(\Lambda)$ is supported in a small flow-invariant \neigh{} of $\widehat{\Lambda}_j$, where $P_{\eps}$ is
intertwined with $\widehat{P}_j$, according to Proposition \ref{prop6.1}, and $\chi=1$ near $\widehat{\Lambda}_j$, $j=1,3$. As in~\cite{HiSj05}, we
choose $\chi_j$ so that in the sense of trace class operators on $H(\Lambda)$, we have
\begeq
\label{eq46}
[P_{\eps},\chi_j] = {\cal O}(h^{\infty}).
\endeq

\medskip
\noindent
The functions $0\leq \psi_{r,\pm}\in C^{\infty}_{b}(\Lambda)$ are such that
\begeq
\label{eq47}
\pm \Re P_{\eps} (\rho) > \frac{1}{C},
\endeq
near the support of $\psi_{r,\pm}$, respectively. Here, as in Section \ref{horizontal}, we may assume that the support of $\psi_{r,+}$ is
unbounded, and near infinity, the bound (\ref{eq47}) improves to
\begeq
\label{eq48}
\Re P_{\eps}(\rho) \geq \frac{m(\rho)}{{\cal O}(1)}.
\endeq
We now come to describe the properties of the functions $\psi_{i,\pm}$ and $\psi_{i,0}$ in (\ref{eq45}). These non-negative
functions in $C^{\infty}_0(\Lambda)$  are supported in regions invariant under the $H_p$--flow, and the estimate
$$
\Im P_{\eps} \leq \eps F_3 - \frac{\eps}{C}
$$
holds near ${\rm supp}\,\psi_{i,-}$. Similarly, near ${\rm supp}\, \psi_{i,+}$ we have
$$
\Im P_{\eps} \geq \eps F_1 + \frac{\eps}{C},
$$
and finally, the bound
$$
\eps F_3 + \frac{\eps}{C} \leq \Im P_{\eps} \leq \eps F_1 -\frac{\eps}{C}
$$
is valid in a \neigh{} of the support of $\psi_{i,0}$. As in Section \ref{horizontal}, we may and will arrange so that in the sense of trace class
operators on $H(\Lambda)$, we have
\begeq
\label{eq49}
A[P_{\eps},\psi_{i,\cdot}] = {\cal O}(h^{\infty}),\quad [P_{\eps},\psi_{i,\cdot}]A = {\cal O}(h^{\infty}),\quad \cdot  = \pm,0.
\endeq
Here $A$ is a microlocal cut-off to a region where $\abs{\Re P_{\eps}}<1/{\cal O}(1)$.

\medskip
\noindent
The analysis of the trace integrals
\begeq
\label{eq50}
{\rm tr}\,\frac{1}{2\pi i}\int_{\gamma_1} (z-P_{\eps})^{-1}\psi_{r,\pm}\,dz
\endeq
proceeds exactly as in the proof of Proposition \ref{prop5.1}, thanks to the elliptic estimates (\ref{eq47}), (\ref{eq48}), and as there, we find that
\begin{multline}
\label{eq51}
{\rm tr}\, \frac{1}{2\pi i} \int_{\gamma_1} \left(z-P_{\eps}\right)^{-1}\, \psi_{r,\pm}\,dz \\  =
\frac{1}{2\pi i} \frac{1}{(2\pi h)^2}\int_{\gamma_1} \int\!\!\!\int \frac{1}{z-p_{\eps}(\rho)}\, \psi_{r,\pm}(\rho)\, \mu(d\rho)\, dz +
{\cal O}\left(\frac{1}{h}\right).
\end{multline}

\medskip
\noindent
When understanding the trace
$$
{\rm tr}\,\frac{1}{2\pi i}\int_{\gamma_1} (z-P_{\eps})^{-1}\psi_{i,-}\,dz,
$$
we continue to follow the analysis of Section \ref{horizontal}, and introduce an auxiliary trace class perturbation of $P_{\eps}$, concentrated in a
region of the form $\abs{\Re P_{\eps}} < 1/{{\cal O}(1)}$, similar to (\ref{eq32.051}). The arguments of Section \ref{horizontal} apply then as the stand, and we get the following
direct analog of Proposition \ref{prop5.4}.

\begin{prop}
\label{prop6.3}
Assume that $E_2<0<E_4$, $\abs{E_j}< 1/{{\cal O}(1)}$, $j=2,4$, and let $0<\widetilde{\delta}<1$ be so small that
$h\leq \eps^{12\widetilde{\delta}}$. Let $\gamma_j$, $j=1,3$ be the horizontal segment given by $E_2 \leq \Re z \leq E_4$, $\Im z =\eps F_j$.
Let finally $0\leq \chi_j\in C^{\infty}_0(\Lambda)$ be a cut-off function to an $\eps^{\widetilde{\delta}}$-\neigh{} of $\widehat{\Lambda}_j$, $j=1,3$,
enjoying the commutator property {\rm (\ref{eq46})}. We have
\begin{multline}
{\rm tr}\,\frac{1}{2\pi i}\int_{\gamma_j} \left(z-P_{\eps}\right)^{-1}\,(1-\chi_j)\,dz \\ \nonumber
= \frac{1}{2\pi i}\frac{1}{(2\pi h)^2} \int_{\gamma_j} \int\!\!\!\int  \frac{1}{z-p_{\eps}(\rho)}(1-\chi_j(\rho))\,\mu(d\rho)\,dz
+ {\cal O}\left(\frac{\eps^{3\widetilde{\delta}}}{h^2}\right) \log \frac{1}{\eps},\quad j=1,3.
\end{multline}
Here we continue to assume that $h\ll \eps \leq {\cal O}(h^{\delta})$, $\delta>0$.
\end{prop}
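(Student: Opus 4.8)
The plan is to follow closely the strategy of Section \ref{horizontal}, replacing the inputs specific to the completely integrable case by their periodic counterparts: Proposition \ref{prop6.1} in place of Proposition \ref{prop2.1}, the commutator identities (\ref{eq46}), (\ref{eq49}) in place of (\ref{eq32.001}), (\ref{eq32.03}), (\ref{eq32.031}), and the essentially elliptic bounds (\ref{eq47}), (\ref{eq48}) in place of (\ref{eq32.01}), (\ref{eq32.02}). Concretely, using the partition of unity (\ref{eq45}) we write $1-\chi_j = \psi_{r,+}+\psi_{r,-}+\psi_{i,-}+\psi_{i,0}+\psi_{i,+}$ and treat each term separately.

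First, for the pieces carrying $\psi_{r,\pm}$, the ellipticity bound (\ref{eq47}) near the bounded part of the support, together with (\ref{eq48}) near infinity, allows the construction of a trace class parametrix for $z-P_{\eps}$ exactly as in the proof of Proposition \ref{prop5.1}; this reproduces the corresponding phase-space integral with a remainder of size ${\cal O}(1/h)$, giving (\ref{eq51}).

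Next, for the piece carrying $\psi_{i,-}$ we introduce, as in (\ref{eq32.051}), an auxiliary trace class perturbation $\widetilde{P}_{\eps}=P_{\eps}-i\eps\chi$, with $\chi\in C^{\infty}_0(\Lambda)$ constant on the region $\abs{\Re P_{\eps}}<1/{\cal O}(1)$ and supported just beyond it, chosen so that $\Im\widetilde{P}_{\eps}\leq\eps F_3-\eps/{\cal O}(1)$ throughout $\{\abs{\Re P_{\eps}}<1/{\cal O}(1)\}$, while $\widetilde{P}_{\eps}=P_{\eps}$ near $\supp(\psi_{i,-})$. The analog of Lemma \ref{lemma5.2} then holds: part (1), the bijectivity of $z-\widetilde{P}_{\eps}$ together with the resolvent bound, follows from the sharp G\aa{}rding inequality as before, and part (2), $(z-\widetilde{P}_{\eps})^{-1}\psi_{i,-}-(z-P_{\eps})^{-1}\psi_{i,-}={\cal O}(h^{\infty})$ in the trace class norm, follows from the commutator expansion (\ref{eq32.3}) applied with the cut-offs of (\ref{eq49}). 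One then replaces $(z-P_{\eps})^{-1}\psi_{i,-}$ by $(z-\widetilde{P}_{\eps})^{-1}\psi_{i,-}$, deforms $\gamma_1$ upward into the region $\Im z\geq\eps F_1$ as in (\ref{eq33}), and decomposes $\psi_{i,-}=\psi_{i,-,2}+\psi_{i,-,4}+\widetilde{\psi}_{i,-}$ according to whether $\Re P_{\eps}$ lies within ${\cal O}(\eps^{3\widetilde{\delta}})$ of $E_2$, of $E_4$, or stays away from both. On $\supp(\widetilde{\psi}_{i,-})$ the estimate $\abs{z-\widetilde{p}_{\eps}(\rho)}\geq 1/{\cal O}(1)$ holds along the deformed contour, so an $h$-pseudodifferential parametrix (as in the proof of Proposition \ref{prop4.1}) yields the corresponding phase-space integral up to ${\cal O}(1/h)$; the two remaining pieces $\psi_{i,-,2},\psi_{i,-,4}$ have trace class norm ${\cal O}(\eps^{3\widetilde{\delta}}/h^2)$ and, combined with the resolvent bound of the Lemma, contribute ${\cal O}(\eps^{3\widetilde{\delta}}h^{-2}\log(1/\eps))$. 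The pieces carrying $\psi_{i,0}$ and $\chi_3$ are treated the same way, using $F_3<F_1$, and the piece carrying $\psi_{i,+}$ by the symmetric construction with a perturbation $\widehat{P}_{\eps}$ pushing $\Im P_{\eps}$ above $\eps F_1+\eps/{\cal O}(1)$ together with a downward contour deformation.

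Summing the contributions of the five pieces of $1-\chi_j$ yields the asserted identity. The hypothesis $h\leq\eps^{12\widetilde{\delta}}$, slightly stronger than (\ref{eq34.041}), is imposed only so that all the crude remainders above --- which, as in the remark following Proposition \ref{prop4.2}, are not expected to be sharp --- are absorbed into ${\cal O}(\eps^{3\widetilde{\delta}}h^{-2}\log(1/\eps))$. The only point requiring care is the bookkeeping of the powers of $\eps$ and $h$ in the parametrix constructions and trace class estimates; no genuinely new difficulty arises, since the averaging machinery of Section \ref{review}, specific to the completely integrable setting, plays no role away from the tori --- the IR-manifold $\Lambda$ and the elliptic/non-elliptic dichotomy are supplied entirely by Proposition \ref{prop6.1}.
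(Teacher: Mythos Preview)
Your proposal is correct and follows exactly the approach of the paper, which simply states that the arguments of Section~\ref{horizontal} apply as they stand once the periodic-case inputs (Proposition~\ref{prop6.1}, (\ref{eq46}), (\ref{eq47})--(\ref{eq49})) replace their completely integrable counterparts. Two small slips to clean up: in your initial decomposition of $1-\chi_1$ you omit $\chi_3$ (though you correctly treat it later), and your claimed bound $\abs{z-\widetilde{p}_{\eps}(\rho)}\geq 1/{\cal O}(1)$ on $\supp(\widetilde{\psi}_{i,-})$ along $\widetilde{\gamma}_1$ is too optimistic near the shared endpoints of $\gamma_1$ and $\widetilde{\gamma}_1$, where the lower bound is only $\eps^{3\widetilde{\delta}}/{\cal O}(1)$ as in (\ref{eq34}); the resulting parametrix remainder is then ${\cal O}(h\eps^{-7\widetilde{\delta}}/h^2)$ rather than ${\cal O}(1/h)$, but this is still absorbed by your hypothesis $h\leq\eps^{12\widetilde{\delta}}$.
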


\medskip
\noindent
An inspection, using the normal forms near the tori $\widehat{\Lambda}_j$, $j=1,3$, described in Proposition \ref{prop6.1}, shows next that the
result of Proposition \ref{prop5.5} remains valid in the present situation. Combining this observation with Propositions \ref{prop6.2} and \ref{prop6.3},
we conclude that the number of eigenvalues of $P_{\eps}$ in the rectangle $R$ in (\ref{eq44}) is given by
$$
\frac{1}{(2\pi h)^2}\int\!\!\!\int 1_R \left(p_{\eps}(\rho)\right)\, \mu(d\rho)
+ \frac{1}{h^2}{\cal O}\left(\eps^{3\widetilde{\delta}} \log{\frac{1}{\eps}}\right).
$$
Recalling the construction of the IR-manifold $\Lambda$, described in detail in \cite{HiSj04}, we may summarize the discussion in this section in the
following result, analogous to Theorem \ref{theo_main} in the periodic case.

\begin{theo}
\label{theo6.4}
Let $P_{\eps}$ satisfy the general assumptions of subsection {\rm 2.1}, in particular {\rm (\ref{eq9.2})}, and make fur\-ther the assum\-ption
{\rm (\ref{eq36})}. Let
$$
F_j \in [\min_{p^{-1}(0)\cap {\bf R}^4} \langle{q}\rangle, \max_{p^{-1}(0)\cap {\bf R}^4} \langle{q}\rangle],\quad j=1,3,\quad F_3 < F_1,
$$
be such that the assumptions {\rm (\ref{eq40})} and {\rm (\ref{eq41})} are satisfied. Furthermore, assume that the level sets
$\Lambda_{0,F_j}$ in {\rm (\ref{eq39})} are connected, $j=1,3$. Assume that $\eps  = {\cal O}(h^{\delta})$, $0<\delta\leq 1$, is such that $h/\eps \ll 1$.
Let $C>0$ be sufficiently large and let $E_2 < 0 < E_4$ be independent of $h$ with $\abs{E_j} < 1/C$, $j=2,4$. Assume finally that
$\widetilde{\delta}\in (0,1)$ is so small that $h\leq \eps^{12\widetilde{\delta}}$. Then the number of eigenvalues of $P_{\eps}$ in the rectangle
$$
R = (E_2,E_4) + i\eps (F_3,F_1),
$$
counted with the algebraic multiplicities, is equal to
$$
\frac{1}{(2\pi h)^2}\int\!\!\!\int_{E_2 \leq p \leq E_4} 1_{[F_3,F_1]}\left(\langle{q\rangle}\right)\, dx\,d\xi
+ \frac{1}{h^2}{\cal O}\left(\eps^{3\widetilde{\delta}} \log{\frac{1}{\eps}}\right).
$$
\end{theo}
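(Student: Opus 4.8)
The plan is to follow, step by step, the strategy used to prove Theorem \ref{theo_main}, replacing each ingredient by its periodic analogue established above. One starts from the representation of the eigenvalue count as a trace of the spectral projection,
$$
\#\left({\rm Spec}(P_{\eps})\cap R\right) = {\rm tr}\,\frac{1}{2\pi i}\int_{\gamma} (z-P_{\eps})^{-1}\,dz,\qquad \gamma = \partial R,
$$
and decomposes $\gamma = \gamma_1\cup\gamma_2\cup\gamma_3\cup\gamma_4$ as in (\ref{eq26.1}), with $\gamma_{1,3}$ the horizontal sides $\Im z = \eps F_{1,3}$ and $\gamma_{2,4}$ the vertical sides $\Re z = E_{2,4}$. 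As in Section \ref{horizontal}, after an arbitrarily small change of $E_j,F_j$ we may assume $(z-P_{\eps})^{-1}$ exists along $\gamma$ with ${\rm dist}(z,{\rm Spec}(P_{\eps}))\geq \eps h^{N_0}$ there, so that the resolvent bounds inherited from \cite{HiSj04} (the periodic counterpart of Proposition \ref{prop2.2}) apply.

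For the vertical sides $\gamma_2,\gamma_4$ one invokes Proposition \ref{prop6.2} directly. For the horizontal sides one decomposes $(z-P_{\eps})^{-1}$ using the partition of unity (\ref{eq45}) subordinate to the IR-manifold $\Lambda$ of Proposition \ref{prop6.1}. The piece $(1-\chi_j)$, collecting the elliptic cut-offs $\psi_{r,\pm}$ and the non-elliptic ones $\psi_{i,\pm},\psi_{i,0}$ away from the tori, is treated exactly as in Section \ref{horizontal}: a trace class pseudodifferential parametrix for $z-P_{\eps}$ in the elliptic region (the analogue of Proposition \ref{prop5.1}), and, in the non-elliptic regions, an auxiliary trace class perturbation $\widetilde{P}_{\eps}$ of the form (\ref{eq32.051}) concentrated in $\{|\Re P_{\eps}|<1/{\cal O}(1)\}$, which pushes $\Im\widetilde{P}_{\eps}$ off $\eps F_j$ and permits a contour deformation into the now-elliptic region; the commutator bounds (\ref{eq49}) ensure that passing from $P_{\eps}$ to $\widetilde P_{\eps}$ costs only ${\cal O}(h^{\infty})$. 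This yields Proposition \ref{prop6.3}. For the piece $\chi_j$ one uses the intertwining $U_jP_{\eps}=\widehat P_j U_j$ near $\widehat\Lambda_j$ from Proposition \ref{prop6.1}, reducing to a sum over $k\in\z^2$ of contour integrals of $(z-\mu(k))^{-1}$ against a cut-off, $\mu(k)$ being the eigenvalues of the translation-invariant operator $\widehat P_j(hD_x,\eps,h/\eps;h)$; comparing this Riemann sum with the matching phase-space integral as in the proof of Proposition \ref{prop5.5} — the estimates (\ref{eq35.0})--(\ref{eq35.6}) carrying over with $\widehat p(\xi,\eps,h/\eps)$ in place of $\widehat p(\xi,\eps)$ — gives the analogue of Proposition \ref{prop5.5} with remainder ${\cal O}(h^{-1}\log\tfrac1\eps)$.

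Summing the four contributions and collecting errors (which amounts to ${\cal O}(h^{-2}\eps^{3\widetilde\delta}\log\tfrac1\eps)$ under $h\leq\eps^{12\widetilde\delta}$), the residue calculus in $z$ as in (\ref{eq35.7}) turns the leading term into $\frac{1}{(2\pi h)^2}\iint 1_R(p_{\eps}(\rho))\,\mu(d\rho)$, with $p_{\eps}$ the leading symbol of $P_{\eps}$ on $H(\Lambda)$. Finally, one transfers this integral back to $\real^4$: recalling from \cite{HiSj04} the $C^{\infty}$ canonical transformation $\kappa$, ${\cal O}(\eps+h/\eps)$-close to the identity with $\Lambda=\kappa(\real^4)$ near $p^{-1}(0)$ and weight $G$ solving $H_pG=q-\langle q\rangle$ away from a neighbourhood of $\widehat\Lambda_1\cup\widehat\Lambda_3$ (and given there by the normal form of Proposition \ref{prop6.1}), one gets $p_{\eps}(\kappa(\rho))=p(\rho)+i\eps(q-H_pG)(\rho)+{\cal O}(\eps^2)=p(\rho)+i\eps\langle q\rangle(\rho)+{\cal O}(\eps^2)$ since $H_p\langle q\rangle=0$. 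Hence $p_{\eps}^{-1}(R)\subset\Lambda$ is ${\cal O}(\eps)$-close to $\{E_2\leq p\leq E_4,\ F_3\leq\langle q\rangle\leq F_1\}$, so $\iint 1_R(p_{\eps}(\rho))\,\mu(d\rho)=\iint_{E_2\leq p\leq E_4}1_{[F_3,F_1]}(\langle q\rangle)\,dx\,d\xi+{\cal O}(\eps)$, which is absorbed by the remainder. The main obstacle is the input from \cite{HiSj04} recalled in Proposition \ref{prop6.1}: constructing a single IR-manifold $\Lambda$ adapted simultaneously to the two periodic tori $\widehat\Lambda_1,\widehat\Lambda_3$, together with the $x$-independent normal form near each of them whose errors are controlled by $\eps+h/\eps$ --- it is exactly these $h/\eps$-errors that force the stronger requirement $h/\eps\ll1$, in contrast with the mere condition $h^K\leq\eps$ of the Diophantine case. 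Granting this, all the analytic estimates are local perturbations of those in Sections \ref{vertical}--\ref{horizontal}.
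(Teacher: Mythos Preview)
Your proposal is correct and follows essentially the same approach as the paper: trace of the spectral projection, vertical segments via Proposition \ref{prop6.2}, horizontal segments split by the partition of unity (\ref{eq45}) with the away-from-tori piece handled by Proposition \ref{prop6.3} and the near-tori piece by the normal form of Proposition \ref{prop6.1} together with the Riemann-sum comparison of Proposition \ref{prop5.5}, then pullback to $\real^4$ via the canonical transformation from \cite{HiSj04}. The paper's own argument is in fact more terse at the final step (it simply invokes the construction of $\Lambda$ in \cite{HiSj04}), whereas you spell out the symbol computation $p_{\eps}\circ\kappa = p + i\eps\langle q\rangle + {\cal O}(\eps^2)$ explicitly; your remark that the $h/\eps$ errors in the normal form are what force the condition $h/\eps\ll 1$ is also accurate.
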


\section{Complex perturbations and the damped wave equation on a surface of revolution}
\label{revolution}
\setcounter{equation}{0}
The purpose of this final section is to illustrate how Theorems \ref{theo_main} and \ref{theo_ext} apply in the case when $M$ is an analytic
surface of revolution in $\real^3$, and
\begeq
\label{eq60}
P_{\eps}=-h^2\Delta+i\eps q,
\endeq
where $\Delta$ is the Laplace-Beltrami operator and $q$ is an analytic function on $M$. When doing so, we shall restrict the attention to
the same class of surfaces of revolution as in~\cite{HiSjVu},~\cite{HiSj08}, and begin by recalling the assumptions made on $M$ in these previous works.

\medskip
\noindent
Let us normalize $M$ so that the $x_3$-axis is its axis of revolution, and parametrize it by the cylinder
$[0,L]\times S^1$, $L>0$,
\begeq
\label{eq61}
[0,L]\times S^1 \ni (s,\theta)\mapsto (f(s)\cos\theta, f(s)\sin \theta, g(s)).
\endeq
Here the parameter $s\in [0,L]$ is the arclength along the meridians, so that $(f'(s))^2+(g'(s))^2=1$. The functions $f$ and $g$ are assumed to be
real analytic on $[0,L]$, and we shall assume that for each $k\in \nat$,
$$
f^{(2k)}(0)=f^{(2k)}(L)=0,
$$
and that $f'(0)=1$, $f'(L)=-1$. As we recalled in~\cite{HiSjVu}, these assumptions guarantee the regularity of $M$ at the poles.
We assume furthermore that $M$ is a simple surface of revolution, in the sense that $f(s)>0$ on $(0,L)$ has precisely one critical point
$s_0\in (0,L)$, which is a non-degenerate maximum, so that $f''(s_0)<0$. To fix the ideas, assume that $f(s_0)=1$.
Associated to $s_0$ we have the equatorial geodesic $\subset M$, given by $s=s_0$, $\theta\in S^1$.

\medskip
\noindent
Writing
$$
T^*\left(M\backslash \{(0,0,g(0)),(0,0,g(L))\}\right)\simeq T^*\left((0,L)\times S^1\right),
$$
and using (\ref{eq61}), we see that the leading symbol of $P_0=-h^2\Delta$ on $M$ is given by
\begin{equation}
\label{eq62}
p(s,\theta,\sigma,\theta^*)=\sigma^2+\frac{(\theta^*)^2}{f^2(s)}.
\end{equation}
Here $\sigma$ and $\theta^*$ are the dual variables to $s$ and
$\theta$, respectively. Since the function $p$ in (\ref{eq62}) does not depend on $\theta$, it follows that $p$ and $\theta^*$ are in involution,
and we recover the well-known fact that the geodesic flow on $M$ is completely integrable.

\medskip
\noindent
Let $E>0$ and $\abs{F}< E^{1/2}$, $F\neq 0$. Then the set
$$
\Lambda_{E,F}: p=E,\,\, \theta^*=F,
$$
is an analytic Lagrangian torus contained inside the real energy surface $p^{-1}(E)$. Geometrically, the torus $\Lambda_{E,F}$ consists of
geodesics contained between and intersecting tangentially the parallels $s_{\pm}(E,F)$ on $M$ defined by the equation
$$
f(s_{\pm}(E,F))=\frac{\abs{F}}{E^{1/2}}.
$$
For $F=0$, the parallels reduce to the two poles and we obtain a torus consisting of a family of meridians. The case
$\abs{F}=E^{1/2}$ is degenerate and corresponds to the equator $s=s_0$, traversed with the two different orientations. Writing
$\Lambda_a:=\Lambda_{1,a}$, we get a decomposition as in (\ref{eq12}),
$$
p^{-1}(1)\cap T^*M = \bigcup_{a\in J} \Lambda_a,
$$
with $J = [-1,1]$, $S=\{\pm 1\}$.

\medskip
\noindent
In~\cite{HiSjVu}, we have derived an explicit expression for the
rotation number $\omega(\Lambda_a)$ of the torus $\Lambda_{a}\subset p^{-1}(1)$, $0\neq a\in (-1,1)$, given by
\begeq
\label{eq63}
\omega(\Lambda_a)=\frac{a}{\pi}\int_{s_-(a)}^{s_+(a)}
\frac{1}{f^2(s)}\left(1-\frac{a^2}{f^2(s)}\right)^{-1/2}\,ds,\quad
f(s_{\pm}(a))=\abs{a}.
\endeq
We are going to assume that the analytic function $(-1,1)\ni a\mapsto
\omega(\Lambda_a)$ is not identically constant.


\medskip
\noindent
Let $q=q(s,\theta)$ be a real-valued analytic function on $M$, which we shall view as
a function on $T^*M$. Associated to each $a\in J$, we introduce the
compact interval $Q_{\infty}(\Lambda_a)\subset \real$ defined as in (\ref{eq16}). We also define
an analytic function
$$
(-1,1)\ni a\mapsto \langle{q}\rangle(\Lambda_a),
$$
obtained by averaging $q$ over the invariant tori $\Lambda_a$. Let us assume that the function
$a\mapsto \langle{q}\rangle(\Lambda_a)$ is not identically constant.

\medskip
\noindent
{\it Example}. Assume that $q=q(s)$ depends on $s$ only. Then it was shown in~\cite{HiSjVu} that for all $0\neq a\in(-1,1)$, we have
$$
Q_{\infty}(\Lambda_a) = \{ \langle{q}\rangle(\Lambda_a)\},
$$
where
$$
\langle{q}\rangle(\Lambda_a) = \frac{J(q,a)}{J(1,a)},\quad J(\psi,a) = \int_{s_-(a)}^{s_+(a)} \psi(s) \frac{f(s)}{(f^2(s) - a^2)^{1/2}}\,ds,\quad
f(s_{\pm}(a))=\abs{a}.
$$

\medskip
\noindent
Generalizing the example above, an analytic family of the form
$$
q_{\eta}(s,\theta) = q_0(s) + \eta q_1(s,\theta),\quad q_0=q,\quad 0<\eta \ll 1,
$$
was also considered in~\cite{HiSjVu}, and it was shown that the set
$$
\bigcup_{\Lambda_a \in \omega^{-1}({\rm \bf Q})\cup S} Q_{\infty,\eta}(\Lambda_a),\quad S=\{\pm 1\},
$$
has a small measure compared with that of $\{\langle{q_{\eta}}\rangle(\Lambda_a); a\in (-1,1)\}$, and that consequently, there exists a
rich set of values $F \in \cup_{a\in J} Q_{\infty,\eta}(\Lambda_a)$ satisfying the assumptions (\ref{eq21}), (\ref{eq22}), (\ref{eq23}), and
(\ref{eq26.01}) for $q_{\eta}$. Here $Q_{\infty,\eta}(\Lambda_a)$ is the range of the limit of $\langle{q_{\eta}}\rangle_T$, as
$T\rightarrow \infty$, along $\Lambda_a$.

%
\medskip
\noindent
The following result is a consequence of Theorem \ref{theo_main}.
\begin{prop}\label{rev1}
Assume that $M\subset \real^3$ is a simple analytic surface of revolution given by the parametrization {\rm (\ref{eq61})}, for
which the rotation number $\omega(\Lambda)$, $\Lambda=\Lambda_a$, defined in {\rm (\ref{eq63})}, is not identically constant. Consider an operator
of the form $P_{\eps}=-h^2\Delta+i\eps q$, with $q=q_{\eta}$ as above, such that the analytic function
$(-1,1)\ni a\mapsto \langle{q}\rangle(\Lambda)$,  given by the
torus averages of $q$, is not identically constant and extends continuously to $[-1,1]$. There exists a subset ${\cal E}_\eta \subset
\bigcup_{\Lambda \in J}Q_\infty (\Lambda )$ of measure $\mu(\eta )$ tending to zero with $\eta$,
such that the conclusion of Theorem {\rm \ref{theo_main}} holds uniformly for
$F_j\in \bigcup_{\Lambda \in J}Q_\infty (\Lambda )\setminus {\cal E}_{\eta}$, and gives the number of eigenvalues of $P_{\eps}$ in the region
$$
\left[E_2,E_4\right] + i\eps \left[F_3, F_1\right],\quad E_2 < 1 < E_4,\quad \abs{E_j-1}\sim \eps^{\widetilde{\delta}}.
$$
\end{prop}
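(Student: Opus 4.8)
The plan is to obtain Proposition~\ref{rev1} as a direct application of Theorem~\ref{theo_main}, with the normalization point $0$ replaced by the energy level $1$ (a harmless translation), by exhibiting an $h$-- and $\eps$--independent exceptional set ${\cal E}_{\eta}$ of measure $\mu(\eta)\to0$ as $\eta\to0$, outside of which the assumptions (\ref{eq21}), (\ref{eq22}), (\ref{eq23}) and the uniform Diophantine condition (\ref{eq19}) all hold. The general hypotheses of subsections 2.1--2.2 are immediate in this setting: $p$ in (\ref{eq62}) is the principal symbol of $-h^2\Delta$ on the compact analytic surface $M$, hence elliptic at infinity (after replacing $P_{\eps}$ by a suitable power, as in~\cite{HiSjVu}, to meet (\ref{eq9.2})); $P_{\eps=0}=-h^2\Delta$ is selfadjoint; $q=q_{\eta}$ is real on the real domain; $dp\neq0$ along $p^{-1}(1)$; and the $H_p$--flow is completely integrable with first integral $f=\theta^{*}$, which is Morse--Bott on $p^{-1}(1)\cap T^{*}M$ for a simple surface of revolution, giving, as in (\ref{eq12}), the decomposition $p^{-1}(1)\cap T^{*}M=\bigcup_{a\in[-1,1]}\Lambda_{a}$ with $S=\{\pm1\}$. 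Both analytic functions in play, $a\mapsto\omega(\Lambda_{a})$ of (\ref{eq63}) and $a\mapsto\langle q_{\eta}\rangle(\Lambda_{a})$, are non-constant on $(-1,1)$ by hypothesis, and the latter extends continuously to $[-1,1]$.

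\medskip
\noindent
First I would define ${\cal E}_{\eta}$ as a union of three pieces. The first is $\bigcup_{\Lambda_{a}\in\omega^{-1}({\bf Q})\cup S}Q_{\infty,\eta}(\Lambda_{a})$ together with a fixed open neighborhood of the compact intervals $Q_{\infty}(\Lambda_{\pm1})$; by (\ref{eq17}), summed over rational rotation numbers of bounded height and combined with the local structure of the foliation near the equatorial geodesic, this set has measure $\mu(\eta)\to0$ --- this is exactly the estimate recalled from~\cite{HiSjVu} just before the proposition. Since $\langle q_{\eta}\rangle(\Lambda_{a})$ accumulates in $Q_{\infty}(\Lambda_{\pm1})$ as $a\to\pm1$, there is a fixed compact $K\Subset(-1,1)$ such that $\langle q_{\eta}\rangle(\Lambda_{a})$ lies in this first piece whenever $a\notin K$. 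The second piece is the image under $a\mapsto\langle q_{\eta}\rangle(\Lambda_{a})$ of the set of $a\in K$ for which $\Lambda_{a}$ is irrational but not Diophantine in the sense of (\ref{eq19}); the latter set of $a$'s is null (the Diophantine rotation numbers being of full measure) and the map is Lipschitz on $K$, so its image is null. The third is the image under the same map of the finite set of critical points in $K$ of the analytic non-constant function $a\mapsto\langle q_{\eta}\rangle(\Lambda_{a})$. Altogether $\mu({\cal E}_{\eta})=\mu(\eta)\to0$.

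\medskip
\noindent
Next, for $F_{3}<F_{1}$ in $\bigcup_{\Lambda\in J}Q_{\infty}(\Lambda)\setminus{\cal E}_{\eta}$, I would verify the hypotheses of Theorem~\ref{theo_main}. Any $\Lambda_{a}$ with $F_{j}\in Q_{\infty}(\Lambda_{a})$ must be regular with irrational rotation number (otherwise $F_{j}$ lies in the first piece of ${\cal E}_{\eta}$), so $Q_{\infty}(\Lambda_{a})=\{\langle q_{\eta}\rangle(\Lambda_{a})\}$ and hence $F_{j}=\langle q_{\eta}\rangle(\Lambda_{a})$, and moreover $a\in K$, again by the first piece. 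Thus the $F_{j}$--level set of $a\mapsto\langle q_{\eta}\rangle(\Lambda_{a})$ is contained in $K$ and, $F_{j}$ being a regular value there (third piece), consists of finitely many points $a_{1,j},\ldots,a_{L_{j},j}$; set $\Lambda_{k,j}=\Lambda_{a_{k,j}}$. This gives (\ref{eq21}); non-criticality of $F_{j}$ gives $d_{\Lambda}\langle q_{\eta}\rangle(\Lambda_{k,j})\neq0$, i.e.\ (\ref{eq22}); each $\Lambda_{k,j}$ is regular, irrational, and not a non-Diophantine torus (second piece), so it satisfies (\ref{eq19}), and being finitely many, uniformly so. For (\ref{eq23}): if $F_{j}\in Q_{\infty}(\Lambda)$ with $\Lambda\notin\{\Lambda_{1,j},\ldots,\Lambda_{L_{j},j}\}$, then by the above $\Lambda=\Lambda_{a}$ is regular and irrational with $\langle q_{\eta}\rangle(\Lambda_{a})=F_{j}$ and $a\in K$, forcing $\Lambda_{a}\in\{\Lambda_{1,j},\ldots,\Lambda_{L_{j},j}\}$, a contradiction. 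The remaining requirements of Theorem~\ref{theo_main}, namely $h^{K}\le\eps\le h^{\delta}$, $|E_{j}-1|\sim\eps^{\widetilde{\delta}}$ and $h\le\eps^{10\widetilde{\delta}}\log(1/\eps)$, coincide with those of the proposition, so the theorem applies and yields the stated Weyl count; the uniformity in $F_{j}\notin{\cal E}_{\eta}$ is automatic since ${\cal E}_{\eta}$ does not depend on $h$ or $\eps$.

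\medskip
\noindent
The only step with genuine content is the smallness of the first piece of ${\cal E}_{\eta}$ --- that the union over rational and singular tori of the limit-average intervals $Q_{\infty,\eta}(\Lambda_{a})$ has measure tending to $0$ with $\eta$. This, however, is precisely the estimate recalled from~\cite{HiSjVu}, resting on (\ref{eq17}) together with a summation over rotation numbers of bounded height and the Morse--Bott description of the foliation near the equator. Everything else --- confining the level sets of $\langle q_{\eta}\rangle(\Lambda_{\cdot})$ away from the poles $a=\pm1$, finiteness of $L_{j}$, the Diophantine property of the preimage tori, and the innocuous shift of the reference energy from $0$ to $1$ --- is routine, relying only on the analyticity and non-constancy of $\omega(\Lambda_{\cdot})$ and $\langle q_{\eta}\rangle(\Lambda_{\cdot})$ and on the set-up of subsections 2.1--2.2 as already arranged in~\cite{HiSjVu}.
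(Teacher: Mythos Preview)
Your approach is the same as the paper's: Proposition~\ref{rev1} is stated there simply as ``a consequence of Theorem~\ref{theo_main}'', with a remark pointing to Section~7 of~\cite{HiSjVu} for the precise description of the complement of ${\cal E}_\eta$ in terms of $(\alpha,\beta,\gamma)$--good values. Your explicit three-piece construction of ${\cal E}_\eta$ and the verification of (\ref{eq21})--(\ref{eq23}) spell out what the paper leaves implicit.

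One point deserves care. In your second piece you remove only the image of the set of $a\in K$ for which $\omega(\Lambda_a)$ is irrational but \emph{not} Diophantine --- a null set. This ensures that for each fixed $F_j\notin{\cal E}_\eta$ the finitely many tori $\Lambda_{k,j}$ are Diophantine, hence uniformly so among themselves. It does \emph{not}, however, yield Diophantine constants $C_0,N_0$ in (\ref{eq19}) that are uniform as $F_j$ ranges over the whole complement of ${\cal E}_\eta$; those constants feed into the Birkhoff normal form of Proposition~\ref{prop2.1} and hence into the implicit constants of Theorem~\ref{theo_main}. Your closing claim that ``the uniformity in $F_j\notin{\cal E}_\eta$ is automatic since ${\cal E}_\eta$ does not depend on $h$ or $\eps$'' does not address this. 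The fix is standard and is precisely what the $(\alpha,\beta,\gamma)$--good values of~\cite{HiSjVu} encode: rather than excising only the Liouville rotation numbers, one fixes $N_0$ and $C_0=C_0(\eta)$ in advance, removes the set of $a$'s violating (\ref{eq19}) with those constants (which has small measure, controlled by $C_0(\eta)$), and takes its image under $a\mapsto\langle q_\eta\rangle(\Lambda_a)$. This enlarges your second piece from a null set to one of positive but $o_\eta(1)$ measure, absorbable into $\mu(\eta)$, and restores the uniformity asserted in the proposition.
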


\medskip
\noindent
{\it Remark}. A more precise description of the complement of the set ${\cal E}_{\eta}$ in Proposition 7.1,
given in terms of $(\alpha,\beta,\gamma)$--good values, can be found in Section 7 of~\cite{HiSjVu}.

\medskip
\noindent {\it Remark}. Let us remark finally that when discussing the
operator $P_{\eps}$ given by (\ref{eq60}), it would have been possible
to allow the analytic function $q$ on $M$ to depend holomorphically on
the spectral parameter $z\in {\rm neigh}(1,\comp)$, with $q$
real-valued for $z$ real --- see also the discussion in Section 6
of~\cite{HiSj05} for a similar observation in the periodic case. Such
an extension is motivated by the problem of studying spectral
asymptotics for the damped wave equation with an analytic damping
coefficient, considered on the analytic surface of revolution
$M$.

\medskip
\noindent
We shall now apply this remark to the situation of the damped wave
equation
$$
(-\Delta +2a(x)\partial _t+\partial _t^2)v(t,x)=0,\hbox{ on }{\bf
  R}\times M,
$$
where $a(x)$ is analytic and real-valued. An important role is played here by the stationary solutions $e^{i\tau t}u(x)$, $u\not\equiv 0$
and the corresponding eigenfrequencies $\tau \in {\bf C}$, given by
the equation
\begin{equation}\label{rev.1}
(-\Delta +2ia(x)\tau -\tau ^2)u(x)=0,
\end{equation}
and we are interested in the asymptotic distribution of the
eigenfrequencies $\tau $. We know that the large eigenfrequencies are
confined to a band along the real axis (see \cite{Le}, \cite{Sj00} for such results under
more general assumptions), and that the real parts obey the same Weyl law
as for the corresponding Helmholtz equation $(-\Delta -\tau
^2)u=0$. Less is known about the asymptotic distribution of the
imaginary parts, and here we can apply Proposition \ref{rev1} and the
subsequent remark.

\medskip
\noindent
The set of eigenfrequencies is symmetric under reflection in the
imaginary axis, so we can concentrate on the case when $\Re \tau \gg 1$. We make a semiclassical reduction by putting $\tau =w/h$, $0<h\ll
1$, $\Re w\sim 1$ and get
\begin{equation}\label{rev.2}
(-h^2\Delta +2ihwa(x)-w^2)u=0,
\end{equation}
and we can apply our results with $z=w^2$, $\eps=h$, $q=2\sqrt{z}a$, and with $P=-h^2\Delta $ as the unperturbed operator
with leading symbol $p(x,\xi )=\xi ^2$. It may now also be useful to recall the following general bounds on the imaginary part of an
eigenfrequency $w$ in (\ref{rev.2}),
$$
h\left(\lim_{T\rightarrow \infty}\inf_{p^{-1}(1)} \langle{a}\rangle_T -o(1)\right) \leq \Im w
\leq  h\left(\lim_{T\rightarrow \infty}\sup_{p^{-1}(1)} \langle{a}\rangle_T + o(1)\right).
$$
This result was obtained in~\cite{Le} --- see also (\ref{eq18}) for the present completely integrable case.

\medskip
\noindent
The function $\langle a\rangle_\infty = \lim_{T\rightarrow \infty} \langle{a}\rangle_T$ is
homogeneous of degree $0$ in $\xi$, thanks to the homogeneity properties of the $H_p$--flow. This means that the set $E_2\le \Re w\le E_4$,
$hF_3\le \Im w\le hF_1$ corresponds to the set
$\{ (x,\xi )\in T^*M;\, E_2^2\le \xi ^2\le E_4^2,\ F_3\le \langle
a\rangle_\infty \le F_1\}$ and we notice that the conditions imposed
on $F_j$ (i.e. on the
properties of the corresponding torus, where
$p=E$, $\langle a\rangle_{\infty}=F_j$) in Proposition \ref{rev1} in the
case when $q=a$, are independent of the real energy
$p=E$. Applying Proposition \ref{rev1} and the subsequent remark, we get
\begin{theo}
\label{rev2}
Consider the stationary damped wave equation in the equivalent forms
{\rm (\ref{rev.1})} and {\rm (\ref{rev.2})}. Assume that the assumptions of
Proposition {\rm \ref{rev1}} are fulfilled with $q=a$. Then uniformly for
$F_1,F_3\in \bigcup_{\Lambda \in J}Q_\infty (\Lambda ) \setminus {\cal E}_\eta $,
the number of eigenfrequencies $w$ of {\rm (\ref{rev.2})} in
the region
$$
[E_2,E_4]+ih[F_3,F_1],\ E_2<1<E_4,\ |E_j-1|\sim  h^{\widetilde{\delta }},
$$
is equal to $(2\pi h)^{-2}$ times
\begin{equation*}
\mathrm{vol\,}\{(x,\xi )\in T^*M;\, E_2^2\le
\xi ^2\le E_4^2,\ Q_\infty (\Lambda \left(x,\frac{\xi }{|\xi
  |}\right))\subset [F_3,F_1]\}+{\cal O}(h^{3\widetilde{\delta}}\ln \frac{1}{h}).
\end{equation*}

\par
\noindent
More generally, the number of eigenfrequencies $w$ of {\rm (\ref{rev.2})} in the region
$$
[E_2,E_4]+ih[F_3,F_1],\ E_2<1<E_4,\ |E_j|\sim 1,
$$
is equal to
\begin{equation*}
\begin{split}
\frac{1}{(2\pi h)^2}\left(\mathrm{vol\,}\{(x,\xi )\in T^*M;\, E_2^2\le
\xi ^2\le E_4^2,\ Q_\infty (\Lambda (x,\frac{\xi }{|\xi
  |}))\subset [F_3,F_1]\}\right)+{\cal O}(h^{2\widetilde{\delta
}-2}).
\end{split}
\end{equation*}

\par
\noindent
The number of eigenfrequencies $\tau $ of {\rm (\ref{rev.1})} in the
region $[E_2,E_4]+i[F_3,F_1]$, where $E_2\le E_4$, $E_j\sim 1/h\gg
1$, is equal to
\begin{equation*}
\frac{1}{(2\pi)^2}\left(\mathrm{vol\,}\{(x,\xi )\in T^*M;\, E_2^2\le
\xi ^2\le E_4^2,\ Q_\infty (\Lambda \left(x,\frac{\xi }{|\xi
  |}\right))\subset [F_3,F_1]\}\right)+{\cal O}(h^{2\widetilde{\delta
}-2}).
\end{equation*}

\end{theo}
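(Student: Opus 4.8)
The plan is to derive Theorem \ref{rev2} as a direct consequence of Proposition \ref{rev1} (hence of Theorem \ref{theo_main}) together with the semiclassical reduction already set up in the text. The key point is that the damped wave eigenvalue problem (\ref{rev.1}) is converted, via $\tau = w/h$ and $z = w^2$, into the problem (\ref{rev.2}), which is exactly of the form $P_\eps u = z u$ with $P = -h^2\Delta$, $\eps = h$, and leading non-selfadjoint perturbation $q = 2\sqrt{z}\,a(x)$. Since $\Re w \sim 1$, the factor $2\sqrt{z}$ is an elliptic, $h$-independent, holomorphic function of $z$ near $z = 1$, so the remark following Proposition \ref{rev1} applies: allowing $q$ to depend holomorphically on the spectral parameter causes no difficulty in the proof of Theorem \ref{theo_main}. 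The averages $\langle q\rangle_T = 2\sqrt{z}\,\langle a\rangle_T$ differ from $\langle a\rangle_T$ only by the elliptic prefactor, so the Diophantine-torus and global assumptions on the levels $F_j$ translate between the two normalizations in a straightforward way; in particular the exceptional set ${\cal E}_\eta$ is the one already produced in Proposition \ref{rev1}.

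First I would record the reduction carefully: an eigenfrequency $\tau$ of (\ref{rev.1}) with $\Re\tau \gg 1$ corresponds, after setting $h = 1/\Re\tau$ (or any comparable choice) and $w = h\tau$, to an eigenvalue $z = w^2$ of $P_\eps = -h^2\Delta + 2ih\sqrt{z}\,a(x)$, and that the map $\tau \mapsto w \mapsto z$ is a bijection between the relevant spectral regions. Concretely, the region $[E_2,E_4] + ih[F_3,F_1]$ in the $w$-variable with $|E_j - 1|\sim h^{\widetilde\delta}$ maps to a rectangle in the $z$-variable of the type treated in Theorem \ref{theo_main}, because $z = w^2 = (\Re w)^2 - (\Im w)^2 + 2i\Re w\,\Im w$, and with $\Im w = {\cal O}(h)$ we get $\Re z = (\Re w)^2 + {\cal O}(h^2)$ and $\Im z = 2\Re w\,\Im w + {\cal O}(h^3) = \eps\cdot(\text{average of }q\text{-type quantity}) + \dots$. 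One then checks that counting eigenvalues is preserved under this change of variables since the Jacobian $dz/dw = 2w$ is elliptic.

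Next I would invoke Proposition \ref{rev1} in the form given (with $q = a$, using homogeneity of degree $0$ of $\langle a\rangle_\infty$ in $\xi$, which makes the conditions on the $F_j$ independent of the energy $E$) to obtain the first displayed formula: the number of eigenfrequencies $w$ in $[E_2,E_4] + ih[F_3,F_1]$ with $|E_j - 1|\sim h^{\widetilde\delta}$ equals $(2\pi h)^{-2}$ times ${\rm vol}\{(x,\xi)\in T^*M;\ E_2^2\le \xi^2\le E_4^2,\ Q_\infty(\Lambda(x,\xi/|\xi|))\subset [F_3,F_1]\}$ plus ${\cal O}(h^{3\widetilde\delta}\ln(1/h))$; here the phase-space volume set in Theorem \ref{theo_main}, namely $\bigcup_{E_2\le E\le E_4}\Omega(E)$, becomes exactly this conical set because of the said homogeneity (the torus $\Lambda$ depends only on the direction $\xi/|\xi|$, while the energy constraint becomes $E_2^2\le\xi^2\le E_4^2$ after passing through $z = w^2$). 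For the second displayed formula, with $|E_j|\sim 1$ (an $h$-independent energy window), I would apply Theorem \ref{theo_ext} in place of Theorem \ref{theo_main}: the isoenergetic condition is supplied by the assumption that $\omega(\Lambda_a)$ is nonconstant, so by analyticity it holds away from a finite set, which can be absorbed into ${\cal E}_\eta$; summing the ${\cal O}(\eps^{\widetilde\delta}) = {\cal O}(h^{\widetilde\delta})$-type remainders over ${\cal O}(h^{-\widetilde\delta})$ subintervals — or, more directly, using the global version (\ref{Weyl1}) — produces the stated ${\cal O}(h^{2\widetilde\delta - 2})$ remainder after multiplication by $(2\pi h)^{-2}$. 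Finally, the third formula, in the genuinely non-semiclassical variable $\tau$ with $E_j\sim 1/h$, is obtained by undoing the substitution $\tau = w/h$: the count is unchanged, the volume expression is scale-invariant under $\xi\mapsto\xi/h$ in the appropriate sense, and the prefactor $(2\pi h)^{-2}$ becomes $(2\pi)^{-2}$ while the remainder retains its form.

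The main obstacle I anticipate is bookkeeping around the $z$-dependence of $q = 2\sqrt{z}\,a$ and the resulting nonlinear relation $z = w^2$ between the natural spectral parameter of $P_\eps$ and the physical frequency $w$: one must verify that the rectangle $R$ in Theorem \ref{theo_main}, which is a genuine rectangle in $z$, pulls back under $z = w^2$ to a region whose boundary is, up to ${\cal O}(h^2)$ and ${\cal O}(h^3)$ errors on the horizontal and vertical pieces respectively, the prescribed region in $w$, and that these perturbations of the contour $\gamma$ change the trace of the spectral projection by at most the claimed remainder — this uses the resolvent bound of Proposition \ref{prop2.2} along $\gamma$ together with the fact that the distortion of $\gamma$ is smaller than the spacing of eigenvalues times the length of $\gamma$. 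Once this is in hand, everything else is a transcription of Theorem \ref{theo_main} and Theorem \ref{theo_ext} through the homogeneity of $\langle a\rangle_\infty$, and the remark about holomorphic dependence of $q$ on $z$ disposes of the only other subtlety.
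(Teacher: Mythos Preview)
Your proposal is correct and follows essentially the same route as the paper, which gives only the one-line justification ``Applying Proposition~\ref{rev1} and the subsequent remark, we get'' before stating Theorem~\ref{rev2}. You supply the details the paper leaves implicit: the semiclassical reduction $\tau=w/h$, $z=w^2$, $\eps=h$, $q=2\sqrt{z}\,a$; the use of homogeneity of degree~$0$ of $\langle a\rangle_\infty$ to rewrite the phase-space region as a cone; the appeal to Theorem~\ref{theo_ext} for the $|E_j|\sim 1$ window; and the rescaling to pass to $\tau$. Your flagged ``obstacle'' about the nonlinear map $z=w^2$ distorting the rectangle is exactly the bookkeeping the paper absorbs into the remark on holomorphic $z$-dependence of $q$, and your proposed resolution via the resolvent bounds of Proposition~\ref{prop2.2} is the natural one.
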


\end{document}